\crefname{hypothesis}{Hypothesis}{Hypotheses}
\title{Global $L^\infty$-bounds and long-time behavior of
a diffusive epidemic system in heterogeneous environment \thanks{Submitted to the editors DATE.
\funding{R. Peng was supported by NSF of China (No. 11671175, 11571200), the Priority Academic Program Development of Jiangsu Higher Education Institutions, Top-notch Academic Programs Project of Jiangsu Higher Education Institutions (No. PPZY2015A013) and Qing Lan Project of Jiangsu Province.}}}
\author{Rui Peng\thanks{School of Mathematics and Statistics, Jiangsu Normal University
Xuzhou, 221116, Jiangsu, China
  (\email{pengrui\_seu@163.com}).}
\and Yixiang Wu\thanks{Department of Mathematics, Vanderbilt University
Nashville, TN 37212, USA 
  (\email{yixiang.wu@vanderbilt.edu}).}
}
\newtheorem{Theorem}{Theorem}[section]
\newtheorem{Proposition}{Proposition}[section]
\newtheorem{Lemma}{Lemma}[section]
\newtheorem{Remark}{Remark}[section]
\def\dis{\displaystyle}
\def\Q{Q_{\tau, T}}
\newcommand\bes{\begin{eqnarray}}
\newcommand\ees{\end{eqnarray}}
\begin{document}

\maketitle

\begin{abstract}
 In this paper, we are concerned with
an epidemic reaction-diffusion system with nonlinear incidence mechanism of the form $S^qI^p\,(p,\,q>0)$. The coefficients of the system are spatially heterogeneous and time dependent (particularly time periodic).
We first establish the $L^\infty$-bounds of the solutions of a class of systems,
which improve some previous results  in \cite{Pierre}. Based on such estimates,
we then study the long-time behavior of the solutions of the system.
Our results reveal the delicate effect of the infection mechanism, transmission rate, recovery rate
and disease-induced mortality rate on the infection dynamics.
Our analysis can be adapted to models with some other types of infection incidence mechanisms.
\end{abstract}

\begin{keywords}
Epidemic reaction-diffusion system;  heterogeneous environment; nonlinear incidence mechanism; $L^\infty$-bounds; long-time behavior.
\end{keywords}

\begin{AMS}
  35K57, 35J57, 35B40, 92D25
\end{AMS}

\section{Introduction}
Transmission mechanisms play an essential role in susceptible-infected/host-pathogen/host-vector epidemic models \cite{AM,McCallum}. In the pioneering work of
Kermack and McKendrick \cite{KM1}, the disease transmission was assumed to be governed via the
{\it mass action} mechanism: the number of newly infected individuals per unit area per unit of time are
given by a bilinear incidence function $\beta SI$, where $S,\,I$ are the densities of susceptible and
infected individuals, respectively, and $\beta>0$ is the disease transmission rate.
Nevertheless, as pointed out in several works including \cite{DV1,Het1,McCallum},
such a bilinear incidence function carries some shortcomings
and may require modifications in certain situations. As such, many different nonlinear incidence
functions have been proposed to describe the transmission of infectious diseases. One commonly used
nonlinear incidence function takes the form $\beta S^qI^p$, where $p,\,q>0$ are constants.
Epidemic models with this incidence function have been studied extensively;
one may refer to \cite{Het2,HLV,HV,LJ, LHL,LLI,LR} and the references therein.

On the other hand, it has been recognized that environmental heterogeneity and individual motility are significant factors that
should be taken into consideration when studying the  spread  and control of infectious diseases; one may refer to, for instance, \cite{Du-Peng,Martcheva,Ruan} for relevant discussions. Many reaction-diffusion epidemic
models have been developed to investigate the impact of them on the dynamics of disease transmissions, such as malaria
\cite{LZ1,LZ2}, rabies \cite{Kallen1,Kallen2,Murray1}, dengue fever \cite{Takahashi},
West Nile virus \cite{Lewis,Lin}, hantavirus \cite{Abramson1,Abramson2}, Asian longhorned beetle \cite{GL,GZ}, etc.
These models are derived from the ordinary differential equation (ODE) compartmental epidemic models by introducing random diffusion terms to describe the movement of individuals and the spatiotemporally dependent coefficients to describe the environmental heterogeneity.

Taking into account spatial diffusion, environmental heterogeneity as well as a nonlinear incidence
mechanism, we consider the following reaction-diffusion SI/SIS (S: susceptible, I: infected) epidemic system,
which is a natural extension of the ODE epidemic models proposed by Kermack and McKendrick in \cite{KM1,KM2,KM3,KM4}:
 \begin{equation}
 \left\{ \begin{array}{llll}
{   \partial_t} S-d_S\Delta S=-\beta(x,t)S^qI^p+\gamma(x,t)I,&x\in\Omega,\,t>0, \medskip\\
{   \partial_t} I-d_I\Delta I=\beta(x,t)S^qI^p-[\gamma(x,t)+\mu(x,t)]I,&x\in\Omega,\,t>0,\medskip\\
 {   \partial_\nu} S ={   \partial_\nu} I=0,&x\in\partial\Omega,\,t>0,\medskip\\
S(x,0)=S_0(x),\,I(x,0)= I_0(x),&x\in\Omega.\\
 \end{array}\right.
 \label{density}
 \end{equation}
Here, $S(x, t)$ and $I(x, t)$ are the density of susceptible and infected individuals at position $x$ and time $t$, respectively; the habitat $\Omega\subset \mathbb{R}^n$ is a bounded domain with smooth boundary $\partial \Omega$; $\nu$ is the unit outward normal of $\partial\Omega$ and the homogeneous Neumann boundary condition means that there is no population flux across the boundary;
$d_S$ and $d_I$ are positive constants
measuring the motility of susceptible and infected individuals, respectively; $p,\,q>0$ are constants; $\beta$ is the disease transmission
rate; $\gamma$ is the disease recovery rate and $\mu$ is the disease-induced death rate.  If $\gamma=0$, \eqref{density} is an SI model; if $\gamma\neq 0$, it is an SIS model. 

The global dynamics of \eqref{density} with mass action incidence mechanism (i.e., $p=q=1$) have been investigated by several researchers \cite{DW, Fitz2, LiY,  Li,Webb,LiBo1,WZ}.
In \cite{Webb}, Webb studied the case that $\beta$ and $\mu$ are positive constants and $\gamma=0$, and proved that
$S$ converges to a positive number while $I$ decays to zero; among other things, Li and Yip \cite{LiY} derived the same result using a different approach. In \cite{Fitz2},  Fitzgibbon \emph{et al.}, obtained the same asymptotic result with $\beta$ and $\gamma$ being spatially dependent. When $\mu=0$ and
$\beta,\,\gamma>0$ are spatially dependent, \eqref{density} is an SIS epidemic model which was studied by
\cite{DW,LiBo1,WZ}. In \cite{Li}, Li \emph{et al.} introduced a linear source term to the first equation of \eqref{density} to describe the demographic structure of the population. On the other hand, models related to \eqref{density} have been investigated, for instance,  in \cite{CLL,CL, DW,KMP,  Li,LiPengWang,LPX, Peng, Peng-Liu, Peng-Yi, PZ}.   These works were mostly motivated by \cite{Allen}, where Allen \emph{et al.} considered \eqref{density} with  standard incidence mechanism $SI/(S+I)$ and spatially dependent coefficients.

Model \eqref{density} belongs to a class of reaction-diffusion equations that has been studied extensively.  Adding up the first two equations in \eqref{density} and integrating over $\Omega$, we find
\begin{equation}\label{control-mass-d}
\frac{d}{dt} \int_\Omega (S(x, t)+I(x, t)) dx = -\int_\Omega \mu I(x, t) dx,
\end{equation}
and therefore
\begin{equation}\label{control-mass}
\int_\Omega (S(x, t)+I(x, t)) dx\le  \int_\Omega (S_0(x)+I_0(x)) dx:=N, \ \ t\ge 0,
\end{equation}
which means that the total population is bounded by $N$.
Models with such a property are called reaction-diffusion systems with {\it control of mass}, which  are of the form:
\begin{equation}\label{uv}
\left\{
\begin{array}{lll}
{   \partial_t} u-d_1\Delta u=f(u, v), \ \ &x\in{\color{red} \Omega}, t>0,  \medskip\\
{   \partial_t} v-d_2\Delta v=g(u, v), \ \ &x\in{\color{red}\Omega}, t>0,  \medskip\\
{   \partial_\nu} u= {   \partial_\nu} v=0, \ \  &x\in\partial\Omega, t>0, \medskip\\
u(x, 0)=u_0(x)\ge 0, \ \ v(x, 0)=v_0(x)\ge 0, \ \ &x\in\Omega,
\end{array}
\right.
\end{equation}
where $f, g\in C^1(\mathbb{R}_+^2)$ satisfies
\begin{enumerate}
\item[{\rm(P)}]\ \ \ {   $f(0, v)\ge 0 \ \text{ and } \  g(u, 0)\ge 0,\ \ \forall  u, \,v\ge 0$},
\end{enumerate}
and
\begin{enumerate}
\item[{\rm(M)}]\ \ \
{   $f(u, v)+g(u, v)\le C(1+u+v),\ \ \forall u, \,v\ge 0$},
\end{enumerate}
where $C$ is some nonnegative number.

In \cite{Pierre}, (P) is called as the {\it quasi-positive} condition and (M) is the {\it mass-control} condition.  When $f=-uv^p$ and $g=uv^p$, Alikakos \cite{Al} proved the global existence and $L^\infty$ boundedness of the solutions using the Alikakos-Moser iteration technique for the case $1\le p <(n+2)/n$. In \cite{Masuda}, Masuda  dropped the assumption on $p$ using a Lyapunov functional method. Moreover, Masuda proved the convergence of the solutions to nonnegative constants.  When $f=-u^qv^p+ u^sv^r$ and $g=u^sv^r-u^qv^p$, the system was studied in \cite{Kouachi, Pierre}. For general functions $f$ and $g$ satisfying (P) and (M), Hollis \emph{et al.} \cite{Hollis} studied the $L^\infty$ boundedness of the solutions, where  a prior $L^\infty$-bound on $u$ was assumed. In \cite{Morgan1, Morgan2}, Morgan studied a more general reaction-diffusion system with control in mass, where the boundedness results were based on a Lyapunov-type condition on the nonlinearities. We refer the interested readers to the survey paper by Pierre \cite{Pierre} for more studies along this direction.

%

A priori $L^\infty$-estimates are crucial in the study of the long-time behavior of the solutions of  \eqref{density}. System \eqref{uv} admits a priori $L^1$-estimates, namely, \eqref{control-mass}.  However, it is a rather challenging problem to bootstrap a priori $L^1$-estimates to $L^\infty$-estimates. Indeed, concrete examples for \eqref{uv} have been found that a priori $L^1$-estimates may lead to blow-up at finite time in the $L^\infty$-norm; see \cite{Pierre,QS} and the references therein.
Thus, in order to bootstrap $L^1$-estimates to $L^\infty$-estimates, one has to impose extra conditions on the reaction terms.
In this paper, we will show that the solutions of \eqref{density} are $L^\infty$-bounded for all $p, q>0$ and \eqref{density} is dissipative (i.e., solutions are ultimately uniformly $L^\infty$-bounded).


With our $L^\infty$-estimates, we can investigate the long-time behavior of the solutions of system \eqref{density}.  In the case  $\mu>0$,  we prove that the $I$-component of the solution decays to zero, while the $S$-component of the solution converges to some nonnegative constant $S^*$. More importantly, we show that $S^*$ is positive if $p\ge 1$ and $S^*=0$ if $0<p<1$. In the case $\mu=0$, we prove that \eqref{density} is uniformly persistent under certain conditions.  We point out that one of the main difficulties of the analysis comes from the fact that $S^qI^p$ is not Lipschitz when $0<p, q<1$; for example, the solutions of \eqref{density} may not induce a semiflow on a complete metric space, and therefore many existing theories on dynamical systems cannot apply directly. We remark that the case $0<p<1$ is important, which is the main consideration in \cite{FCGT, LJ}. 


The paper is organized as follows. In Section 2,  we state the main results; in Section 3, we derive the $L^\infty$-bounds of the solutions of a class of reaction-diffusion systems with control of mass; in Section 4, we prove the positivity, uniqueness and $L^\infty$-boundedness of the solutions of \eqref{density}; in Sections 5 and 6, we investigate the long-time behavior of the solutions; in Section 7, we discuss some possible generalizations and the biological implications of our results; in the appendix, we provide some helpful results on the ODE  epidemic systems, which are usually the guidance for the analysis of the corresponding reaction-diffusion epidemic systems, and we state and generalize some results on dynamical systems defined on incomplete metric spaces,  which are used in the proofs in Section 6.


\section{Preliminaries and main results}

\subsection{Global existence and $L^\infty$-bounds of \eqref{uv}}
We first recall some results about the global existence of the solutions of \eqref{uv}. In the survey paper \cite{Pierre}, Pierre established the following result:

\begin{Proposition}[{\cite[Theorem 3.1]{Pierre}}]   \label{lemma_global}
Suppose that {\rm(P)}-{\rm (M)}, and the following hold:
$$
f(u, v)\le C(1+u+v), \ \
 |g(u, v)|\le C(1+u^a+v^a),
 $$
for all $u\ge U, v\ge 0$, where the constants $U, C\ge 0,\,a>0$.
Then, for any nonnegative initial data $u_0, v_0\in C(\bar\Omega)$,
the classical solution of \eqref{uv} is nonnegative and exists globally.
\end{Proposition}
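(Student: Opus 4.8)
The plan is to reduce global existence to an a priori $L^\infty$-bound on bounded time intervals, and to obtain that bound by upgrading the $L^1$-control supplied by (M) through a duality estimate followed by a semigroup/energy bootstrap. First I would set up the local theory: since $f,g\in C^1$, standard parabolic (analytic semigroup) theory gives, for nonnegative $u_0,v_0\in C(\bar\Omega)$, a unique classical solution on a maximal interval $[0,T_{\max})$ obeying the blow-up alternative that $\|u(\cdot,t)\|_{L^\infty}+\|v(\cdot,t)\|_{L^\infty}\to\infty$ as $t\uparrow T_{\max}$ whenever $T_{\max}<\infty$. Nonnegativity follows from the quasi-positivity (P): since $f(x,t,0,v)\ge0$, the zero function is a subsolution of the $u$-equation (with $v\ge0$ inserted), so the maximum principle forces $u\ge0$, and symmetrically $g(x,t,u,0)\ge0$ gives $v\ge0$. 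It therefore suffices to bound $u$ and $v$ in $L^\infty$ on each $[0,T]$ with $T<T_{\max}$.

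Next I would extract the mass-control estimate. Adding the two equations, integrating over $\Omega$ and using the Neumann conditions together with (M) yields $\frac{d}{dt}\int_\Omega(u+v)\le C|\Omega|+C\int_\Omega(u+v)$, so Gronwall gives a uniform $L^1$-bound on $w:=u+v$ over $[0,T]$. To make this $L^1$-control usable I would apply Pierre's duality lemma. Writing $d_1u+d_2v=A\,w$ with $A(x,t)=(d_1u+d_2v)/(u+v)\in[\min(d_1,d_2),\max(d_1,d_2)]$, the nonnegative $w$ satisfies $\partial_t w-\Delta(Aw)\le C(1+w)$ with $A$ bounded between positive constants; testing this against the solution of the associated backward dual problem converts the $L^1$-bound (and $u_0,v_0\in L^2$) into an $L^2(Q_T)$-bound on $w$, and hence on $u$ and $v$ separately.

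With $u,v\in L^2(Q_T)$ in hand I would bootstrap $u$ to $L^\infty$ using the at-most-linear growth of $f$. Testing the $u$-equation against powers $u^{k-1}$ and invoking $f\le C(1+u+v)$ for $u\ge U$ (the bounded region $\{u<U\}$ being handled by splitting the integrals), the integrability of $v$ acts as a forcing while the $u$-self-interaction stays linear; iterating the resulting $L^p$-estimates, equivalently the $L^p$--$L^q$ smoothing of $e^{td_1\Delta}$, and raising the integrability of $v$ where needed through the $L^p$-version of the duality estimate (available because $A$ is bounded), yields $u\in L^\infty(Q_T)$ in finitely many steps. Once $\|u\|_{L^\infty(Q_T)}$ is under control, the $v$-equation $v_t-d_2\Delta v=g(x,t,u,v)$ falls under the Hollis--Martin--Pierre framework: the polynomial growth $|g|\le C(1+u^a+v^a)$, together with the mass-control that caps the positive part of the reaction, permits an Alikakos--Moser $L^p$-iteration (multiplying by powers of $v$ and integrating) delivering $v\in L^\infty(Q_T)$. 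By the blow-up alternative, these bounds force $T_{\max}=\infty$.

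I expect the main obstacle to be the passage from $L^1$ to $L^\infty$ carried out in the second and third paragraphs. This is precisely the step that fails for general mass-controlled systems, where finite-time $L^\infty$-blow-up can coexist with $L^1$-control, and here it is rescued by the linear growth of $f$. The delicate point is that $u$ and $v$ remain coupled both through the duality estimate and through the forcing term, so the bootstrap must improve the integrability of the two components in tandem rather than decoupling them, and the number of iterations together with the admissible exponents depend on the dimension $n$ and on the ratio of the diffusivities $d_1,d_2$.
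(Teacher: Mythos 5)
You should first be aware that the paper does not prove this Proposition at all: it is quoted verbatim from \cite[Theorem 3.1]{Pierre}, so there is no internal proof to compare against. That said, your outline is essentially the standard Hollis--Martin--Pierre/Pierre argument, and its two engines --- the backward-dual estimate and the semigroup/iteration bootstrap --- are exactly the tools the paper redeploys for its own system in Section 3 (Lemma \ref{lemma_dual}, Lemma \ref{lemma_t}, Theorem \ref{Theorem_bound0}). The reduction to a finite-time a priori $L^\infty$-bound via the blow-up alternative, the nonnegativity from (P), the Gronwall $L^1$-control from (M), the $L^2(Q_T)$ duality estimate for $w=u+v$, and the final step for $v$ (polynomial growth of $g$ plus $L^p$-bounds for every finite $p$ fed into the Duhamel formula) are all correct, and you have correctly located the crux in the passage from $L^1$ to $L^\infty$.

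The one step that would fail as written is the parenthetical claim that the $L^p$-version of the duality estimate is ``available because $A$ is bounded.'' For the backward dual problem $-\partial_t\phi-A\Delta\phi=\theta$ with $A$ merely bounded measurable between two positive constants, the maximal-regularity bound $\|\Delta\phi\|_{L^{p'}(Q_T)}\lesssim\|\theta\|_{L^{p'}(Q_T)}$ is elementary only for $p'=2$ (multiply by $-\Delta\phi$ and integrate); for $p'\neq 2$ it is known only in a window around $2$, which is precisely why the variable-coefficient duality method by itself stalls at $L^2(Q_T)$ --- not enough to make $C(1+u+v)$ an $L^{p}$ source with $p>(n+2)/2$ once $n\ge 2$. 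The repair is the device of \cite{Hollis} that the paper reproduces in Lemma \ref{lemma_dual}: run the duality against the \emph{constant-coefficient} backward problem $-\partial_t\phi-d_2\Delta\phi=\theta$, for which maximal regularity holds in every $L^{p'}$, and absorb the cross term $(d_1-d_2)\iint_{Q_T} u\,\Delta\phi$ using the $L^p(Q_T)$-bound on $u$ already in hand. This gives $\|v\|_{L^p(Q_T)}\lesssim 1+\|u\|_{L^p(Q_T)}$ and lets the ``tandem'' bootstrap you describe actually close: $v\in L^{q_k}\Rightarrow u\in L^{q_{k+1}}$ with $1/q_{k+1}=1/q_k-2/(n+2)$ by parabolic regularity, then $v\in L^{q_{k+1}}$ by the constant-coefficient duality, reaching $u\in L^\infty(Q_T)$ in finitely many steps. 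A smaller imprecision: on $\{u<U\}$ the hypotheses give no bound on $f$ whatsoever when $v$ is large ($f$ is only $C^1$ there), so the correct device is to test with powers of $(u-\max\{U,\|u_0\|_{L^\infty}\})_+$, which vanishes on that set, rather than to treat it as a region where the integrals are harmlessly bounded.
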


In \cite{Pierre}, the author applied Proposition \ref{lemma_global} to system \eqref{uv}
with $f=-u^qv^p+ \lambda  u^sv^r$, $g=u^sv^r-u^qv^p$ and $\lambda\in [0, 1]$,
and derived sufficient conditions for the global existence of solutions. More precisely, for any nonnegative initial data $u_0, v_0\in C(\bar\Omega)$, the system has a unique nonnegative global classical solution if
\begin{equation}\label{pqrs}
 p,\, q,\, r,\, s\ge 1
\end{equation}
and one of (i)-(iv) in the following holds:
\begin{enumerate}
\item[{\rm(H1)}]\ {\rm (i)}\ $p>r\geq0$ and $sp-rq\le p-r$;\ \ \ \ {\rm (ii)}\ $p=r\geq0$ and $0\leq s<q$;\\
 {\rm (iii)}\ $s>q\geq0$ and $sp-rq\le s-q$;\ \ \
 {\rm (iv)}\ $s=q\geq0$ and $0\leq p<r$.
\end{enumerate}

{  
\begin{Proposition}[{\cite[Theorem 3.1]{Pierre}}]  \label{lemma_global-a} Assume that \eqref{pqrs} and  one of (i)-(iv) in {\rm(H1)} hold.
Then, for any nonnegative initial data $u_0, v_0\in C(\bar\Omega)$,
system \eqref{uv} with $f=-u^qv^p+ \lambda  u^sv^r$, $g=u^sv^r-u^qv^p$ for $\lambda\in [0, 1]$ admits a unique nonnegative global classical solution.
\end{Proposition}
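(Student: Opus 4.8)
My plan is to derive the statement directly from Proposition \ref{lemma_global} by verifying that latter's hypotheses for \eqref{density1}, where
$f(x,t,u,v)=-\beta u^qv^p+\gamma u^sv^r$ and $g(x,t,u,v)=\beta u^qv^p-(\gamma+\mu)u^sv^r$, the coefficients $\beta,\gamma,\mu$ being bounded and nonnegative (with $\beta$, resp.\ $\gamma+\mu$, bounded below by a positive constant where a genuine dissipation is needed). Since $p,q,r,s\ge1$ by \eqref{pqrs}, the monomials $u^qv^p$ and $u^sv^r$ are $C^1$ on $\mathbb{R}_+^2$, so local existence and uniqueness of the nonnegative classical solution is standard; the entire content is global existence, which Proposition \ref{lemma_global} supplies once (P), (M), and the two one-sided growth bounds are checked.

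Conditions (P) and (M) are immediate and do not depend on which case of (H1) holds. For (P): because $q,s\ge1$ we have $f(x,t,0,v)=0\ge0$, and because $p,r\ge1$ we have $g(x,t,u,0)=0\ge0$. For (M) the cross terms cancel, leaving $f+g=-\mu u^sv^r\le0\le C(1+u+v)$ since $\mu\ge0$. It then remains to verify, in each case of (H1), the linear bound on one reaction term and the polynomial bound on the other. I note that \eqref{density1} is invariant under the relabeling $(u,d_u,f)\leftrightarrow(v,d_v,g)$ and that (P), (M) are symmetric under this exchange; consequently I will treat cases (i)--(ii), in which $p\ge r$, by putting the linear bound on $f$, and reduce cases (iii)--(iv), in which $s\ge q$, to these by applying Proposition \ref{lemma_global} with $u$ and $v$ interchanged.

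The heart of the matter is the bound $f\le C(1+u+v)$ for $u$ large and all $v\ge0$ in case (i) ($p>r$): the source term $\gamma u^sv^r$ must be absorbed by the dissipative term $-\beta u^qv^p$ up to a remainder free of $v$. By Young's inequality with conjugate exponents $p/r$ and $p/(p-r)$,
\[
\gamma u^sv^r\le \tfrac{r}{p}\,\beta u^qv^p+C\,u^{E},\qquad E:=\frac{sp-rq}{p-r},
\]
so that $f\le-(1-\tfrac rp)\beta u^qv^p+Cu^{E}\le Cu^{E}$; this exponent $E$ is exactly the one obtained by maximizing $v\mapsto\gamma u^sv^r-\beta u^qv^p$ at $v^*\sim(u^{s-q})^{1/(p-r)}$. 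Condition (i) states precisely $sp-rq\le p-r$, i.e.\ $E\le1$, whence $u^E\le u$ for $u\ge1$ and $f\le C(1+u+v)$ with $U=1$. Case (ii) ($p=r$, $s<q$) is simpler: $f=u^sv^p(\gamma-\beta u^{q-s})\le0$ once $\beta u^{q-s}\ge\gamma$, which holds for $u\ge U$. The companion bound $|g|\le C(1+u^a+v^a)$ holds with, say, $a=2\max\{p,q,r,s\}$ after splitting each monomial by Young's inequality and using boundedness of the coefficients. Proposition \ref{lemma_global} then yields the global nonnegative classical solution, and cases (iii)--(iv) follow verbatim after $u\leftrightarrow v$, with the analogous borderline exponent $(sp-rq)/(s-q)\le1$.

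The main obstacle, and essentially the only nonroutine step, is this growth estimate: recognizing that the borderline exponent produced by balancing the source and dissipative monomials is $E=(sp-rq)/(p-r)$, and that hypothesis (H1) is precisely tailored to force $E\le1$ (or to make the net coefficient nonpositive in the equality cases). Everything else is bookkeeping: the cancellation giving (M), the vanishing of $f,g$ on the coordinate axes giving (P), and the symmetry reduction of (iii)--(iv) to (i)--(ii). A minor point to address is the degenerate situation where the dissipation coefficient relevant to a given case vanishes identically; there the opposite sign of the corresponding term in the other equation makes the other variable serve as the ``good'' one, so the argument still applies.
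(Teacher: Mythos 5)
Your proposal is correct and follows exactly the route the paper takes: the paper obtains this proposition by citing Pierre's Theorem 3.1, i.e.\ by applying Proposition \ref{lemma_global} with $f=-\beta u^qv^p+\gamma u^sv^r$, $g=\beta u^qv^p-(\gamma+\mu)u^sv^r$, and your verification of (P), (M), the Young-inequality absorption giving $f\le Cu^{E}$ with $E=(sp-rq)/(p-r)\le 1$, and the symmetry reduction of (iii)--(iv) is precisely the content being delegated to \cite{Pierre}. Your explicit caveat that the dissipative coefficient ($\beta$ in cases (i)--(ii), $\gamma+\mu$ in cases (iii)--(iv)) must be bounded below is an accurate reading of what the argument needs and is consistent with the paper's standing assumptions (A3), resp.\ (A3$'$).
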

}

To include \eqref{density} and \eqref{uv} with $f=-u^qv^p+ \lambda u^sv^r, g=u^sv^r-u^qv^p$, 
we consider  the following system: 
 \begin{equation} \label{density1}
 \left\{ \begin{array}{llll}
 \dis {   {\partial_t u}}-d_u\Delta u=-\beta u^qv^p+\gamma u^sv^r,&x\in\Omega,\,t>0, \medskip\\
  \dis {   {\partial_t v}}-d_v\Delta v=\beta u^qv^p-[\gamma +\mu]u^sv^r,&x\in\Omega,\,t>0,\medskip\\
 \dis {   \partial_\nu u}=\dis {   \partial_\nu v}=0,&x\in\partial\Omega,\,t>0,\medskip\\
 \dis u(x,0)=u_0(x),\,v(x,0)= v_0(x),&x\in\Omega,
 \end{array}\right.
 \end{equation}
where $p, q, s, r$ are nonnegative constants and $u_0, v_0\in C(\bar\Omega)$ are nonnegative. {   The coefficients $\beta$, $\gamma$ and $\mu$ are assumed to be nonnegative constants temporarily as they do not play an essential role in establishing the boundedness results here. }

The results in \cite{Pierre} require condition \eqref{pqrs}. In this paper, we will
show that, without condition \eqref{pqrs}, once the solution of system \eqref{density1} exists globally,
it must be uniformly bounded and the system is dissipative provided that either case in {\rm(H1)} holds.
Moreover, we are able to obtain some different parameter ranges which ensure the uniform boundedness of solution;
that is, we will deal with the following parameter ranges:
\begin{enumerate}
\item[{\rm(H2)}]\ {\rm (i)}\ $0\leq p<1$, $0\le s<q$, $ps-qr\ge s-q$;\ \\ {\rm (ii)}\  $0\leq s<1$, $0\leq p<r$, $ps-qr\ge p-r$.
\end{enumerate}

Throughout this paper, $N$ is the initial total mass, i.e. for system \eqref{uv}
$$
N=\int_\Omega (u_0(x)+ v_0(x)) dx,
$$ 
 while $N$ is given by \eqref{control-mass} for system \eqref{density}.

Our $L^\infty$-boundedness results on \eqref{density1} can be stated as follows.

\begin{Theorem}\label{Theorem_boundaa}
Suppose that $\beta>0$, $\gamma, \mu\ge 0$, and one of {\rm (H1)-(i)}, {\rm (H1)-(ii)} or {\rm (H2)-(i)} holds. Let $(u, v)$ be a  nonnegative classical solution of \eqref{density1}. Then there exists $M_\infty>0$ depending on the initial data such that
\begin{equation}\label{Minfdd}
\|u(\cdot, t)\|_{L^\infty(\Omega)},\ \  \|v(\cdot, t)\|_{L^\infty(\Omega)} \le M_\infty, \ \forall t\ge 0.
\end{equation}
Suppose in addition that  $p=r=1$ when  {\rm (H1)-(ii)} holds. Then, there exists $N_\infty$ depending only on $N$ such that, for any global classical solution $(u, v)$, the following hold:
\begin{equation}\label{Ninfdd}
\limsup_{t\rightarrow\infty}\|u(\cdot, t)\|_{L^\infty(\Omega)},\ \ \limsup_{t\rightarrow\infty}\|v(\cdot, t)\|_{L^\infty(\Omega)} \le N_\infty.
\end{equation}
\end{Theorem}

\begin{Remark}
Eq. \eqref{Ninfdd} means that the system is dissipative, which is essential for proving the uniform persistence of \eqref{density} later on. If \eqref{pqrs} is true, then \eqref{Minfdd} assures that a local solution can be uniquely extended to be a global solution. If \eqref{pqrs} is not assumed, the solution may fail to be unique since $u^qv^p$ or $u^sv^r$ may not be Lipschitz.
\end{Remark}

In the case that one of {\rm (H1)-(iii)}, {\rm (H1)-(iv)} or {\rm (H2)-(ii)} holds, we can also establish the uniform boundedness result just by exchanging the roles of $u$ and $v$ in our analysis of Section 3 below. 

\begin{Theorem}\label{Theorem_bound1}
{   Suppose that $\beta\ge 0$, $\gamma+\mu> 0$, and one of {\rm (H1)-(iii)}, {\rm (H1)-(iv)} or {\rm (H2)-(ii)} holds.} Let $(u, v)$ be the a nonnegative  classical solution of \eqref{density1}. Then there exists $M_\infty>0$ depending on the initial data such that
\eqref{Minfdd} holds. Suppose in addition that $q=s=1$ when {\rm (H1)-(iv)} holds. Then, there exists $N_\infty$ depending only on $N$ such that \eqref{Ninfdd} holds for any  global classical solution $(u, v)$.
\end{Theorem}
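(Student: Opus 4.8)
The plan is to deduce Theorem \ref{Theorem_bound1} from the analysis of Section 3 that establishes Theorem \ref{Theorem_bound}, by exploiting the involution that simultaneously exchanges the two unknowns and the two reaction monomials. Concretely, I would introduce the relabeled pair $(\tilde u,\tilde v)=(v,u)$ and rewrite \eqref{density1} in these variables. A direct substitution turns the system into
\begin{equation*}
\left\{
\begin{array}{ll}
\tilde u_t-d_v\Delta\tilde u=-(\gamma+\mu)\tilde u^{\,r}\tilde v^{\,s}+\beta\,\tilde u^{\,p}\tilde v^{\,q}, & x\in\Omega,\ t>0,\\[2mm]
\tilde v_t-d_u\Delta\tilde v=\gamma\,\tilde u^{\,r}\tilde v^{\,s}-\beta\,\tilde u^{\,p}\tilde v^{\,q}, & x\in\Omega,\ t>0,
\end{array}
\right.
\end{equation*}
together with the same Neumann and initial conditions. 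In the relabeled problem the monomial that is \emph{consumed} in the first equation is $\tilde u^{r}\tilde v^{s}$, and its coefficient is $\gamma+\mu$, which is bounded below by $\sigma_0$ precisely under (A3$'$). Thus (A3$'$) plays exactly the role that (A3) played for \eqref{density1}.

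Next I would record that all structural hypotheses needed in Section 3 are preserved. Conditions (A1) and (A2) are clearly inherited, and quasi-positivity (P) is immediate. Adding the two relabeled equations yields the analogue of \eqref{control-mass-d}, namely $\frac{d}{dt}\int_\Omega(\tilde u+\tilde v)\,dx=-\int_\Omega\mu\,\tilde u^{r}\tilde v^{s}\,dx\le 0$, so the $L^1$ a priori bound in terms of $N$ holds verbatim. The only cosmetic difference from \eqref{density1} is that the dissipative loss $-\mu\,\tilde u^{r}\tilde v^{s}$ now sits in the first equation rather than the second; since this only enlarges the coefficient of the consumed monomial, it is harmless. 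The exponent bookkeeping is governed by the involution $(p,q,r,s)\mapsto(s,r,q,p)$: I would check by inspection that $sp-rq$ and $ps-qr$ are invariant under it while $p-r$ and $s-q$ are interchanged, so that the three cases (H1)-(i), (H1)-(ii), (H2)-(i) used for Theorem \ref{Theorem_bound} are carried exactly onto (H1)-(iii), (H1)-(iv), (H2)-(ii), and the normalization $p=r=1$ attached to (H1)-(ii) becomes $q=s=1$ attached to (H1)-(iv).

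With these identifications, the proof reduces to rerunning the Section 3 argument for the relabeled system: use (A3$'$) to extract a spacetime $L^1$-type bound on the consumed monomial $\tilde u^{r}\tilde v^{s}$, combine it with the mass bound and the exponent inequalities to control the remaining monomial $\tilde u^{p}\tilde v^{q}$, and then bootstrap through the parabolic smoothing and iteration estimates to the uniform bound \eqref{Minfdd}. The same scheme, started from the $N$-dependent mass bound and with $q=s=1$ imposed in the (H1)-(iv) case, produces the ultimate bound \eqref{Ninfdd}. Translating back through $(\tilde u,\tilde v)=(v,u)$ returns the statement for the original unknowns.

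I expect the main obstacle to be \emph{justifying} that Section 3 applies without change even though the relabeled system is not literally an instance of \eqref{density1}: the consumed monomial $\tilde u^{r}\tilde v^{s}$ carries the coefficient $\gamma+\mu$ in one equation and $\gamma$ in the other, so these no longer coincide the way the transmission coefficient does in \eqref{density1}. I would therefore reread Section 3 to confirm that it uses only mass control, the lower bound on the coefficient of the consumed monomial in the first equation, and the exponent relations, and never the exact matching of that coefficient across the two equations nor the particular equation carrying the $\mu$-loss. A secondary point, exactly as in Theorem \ref{Theorem_bound}, is the failure of Lipschitz continuity of the monomials when some exponents lie below $1$; this does not affect the a priori bounds but must be kept in mind, since global solutions need not be unique.
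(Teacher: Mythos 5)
Your proposal is correct and is exactly the paper's argument: the paper proves Theorem \ref{Theorem_bound1} by the same relabeling $(\tilde u,\tilde v)=(v,u)$, observing that (A3$'$) then plays the role of (A3) and that (H1)-(iii), (H1)-(iv), (H2)-(ii) are carried onto (H1)-(i), (H1)-(ii), (H2)-(i), after which the Section 3 estimates apply verbatim. Your additional check — that Section 3 uses only the mass control, the lower bound $\sigma_0$ on the coefficient of the consumed monomial in the first equation, and the exponent inequalities, and never the exact matching of the coefficients $\gamma$ and $\gamma+\mu$ across the two equations — is the right point to verify and is in fact more explicit than what the paper records.
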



\subsection{Global existence, positivity, $L^\infty$-bounds and dissipativity of  \eqref{density}}
We now turn to system \eqref{density} and let the coefficients $\beta(x, t), \gamma(x, t), \mu(x, t)$ be dependent on $x$ and $t$.  We impose the following assumptions for \eqref{density}:{  
\begin{enumerate}
\item[{\rm(A1)}] The functions $\beta, \gamma, \mu$ are H\"{o}lder continuous with
$$
0\le \beta(x, t),\ \gamma(x, t),\ \mu(x, t)\le \sigma^0 \ \  \mbox{on} \ \bar\Omega\times[0,\infty),
$$
and, there exists some constant $\sigma_0>0$ such that $\beta(x, t)>\sigma_0$ for all $x\in\bar\Omega$ and $t>0$.
\item[{\rm(A2)}] $S_0$ and $I_0$ are nonnegative continuous functions on $\bar\Omega$. Moreover, 
\begin{enumerate}
\item[(i)] The initial value $I_0\geq,\not\equiv0$ on $\bar\Omega$;
\item[(ii)] If $0<q< 1$, $S_0(x)>0$ for all $x\in\bar\Omega$, and there exists $\sigma_0>0$ such that $\gamma\ge \sigma_0$ for all $x\in\bar\Omega$ and $t\ge 0$;
\item[(iii)] If $0<p<1$, $I_0(x)>0$ for all $x\in\bar\Omega$.
\end{enumerate}
\end{enumerate}
}

%


%
%

 \begin{Remark}
{   Biologically, {\rm (A1)} means that the disease transmission, recovery and mortality rates are bounded and nonnegative.  {\rm (A2)} is imposed to guarantee the global existence and the positivity of the solutions of \eqref{density}}:
 \begin{itemize}
\item If $I_0=0$, then the component $I(x,t)=0$ for all $x\in\bar\Omega,\,t\ge 0$, which is not of our interests.
\item If $\gamma=0$ and $0<q<1$, it is possible for $S(x, t)$ to vanish at some finite time; see Proposition \ref{ODE-result}-(i) in the appendix.
\item When either $0<p<1$ or $0<q<1$, the term $S^qI^p$ is not locally Lipschitz unless $S, I>0$. Therefore, we have to assume the positivity of the initial data to ensure the uniqueness of the local solution. Moreover, the unique extension of the local solution requires the positivity of the solution.
\end{itemize}
 \end{Remark}

We have the following result on the global existence, positivity and $L^\infty$-bounds of the solutions of \eqref{density}.
\begin{Theorem}\label{th-a}
Assume that {\rm (A1)-(A2)} hold. Then system \eqref{density} has a unique global classical solution $(S, I)$ for any $p, q>0$ with $S(x, t), I(x, t)>0$ for all $x\in\bar\Omega$ and $t>0$, and there exists $M_\infty>0$ depending on the initial data such that
\begin{equation}\label{th-a1}
\|S(\cdot, t)\|_{L^\infty(\Omega)}, \ \ \|I(\cdot, t)\|_{L^\infty(\Omega)} \le M_\infty, \ \  t\ge 0.
\end{equation}
Moreover, there exists $N_\infty$ depending only on $N$ such that
\begin{equation}\label{th-a2}
\limsup_{t\rightarrow\infty}\|S(\cdot, t)\|_{L^\infty(\Omega)}, \ \ \limsup_{t\rightarrow\infty}\|I(\cdot, t)\|_{L^\infty(\Omega)} \le N_\infty.
\end{equation}
\end{Theorem}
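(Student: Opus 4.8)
The plan is to regard \eqref{density} as the special case of \eqref{density1} obtained by setting $s=0$ and $r=1$, so that the nonlinear terms $\gamma u^s v^r$ and $(\gamma+\mu)u^s v^r$ collapse to the linear terms $\gamma I$ and $(\gamma+\mu)I$. Under this identification the exponent hypotheses of Theorem \ref{Theorem_bound} become transparent: for $p>1$ condition (H1)-(i) holds (since $sp-rq=-q\le p-1=p-r$); for $p=1$ condition (H1)-(ii) holds, and because $r=1$ the supplementary requirement $p=r=1$ is automatic; and for $0<p<1$ condition (H2)-(i) holds, the inequality $ps-qr\ge s-q$ reducing to the identity $-q\ge -q$. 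Thus every $p,q>0$ is covered. Note that \eqref{pqrs} fails here (as $s=0$), so the earlier Proposition \ref{lemma_global-a} does not apply and we genuinely rely on the improved Theorem \ref{Theorem_bound}. Consequently, \emph{once} we know that \eqref{density} admits a global nonnegative classical solution, the uniform bound \eqref{th-a1} and the dissipativity estimate \eqref{th-a2} follow at once from \eqref{Minfdd} and \eqref{Ninfdd}, using that (A1)-(A3) are part of the standing hypotheses. The real work therefore lies in local existence, uniqueness, positivity, and global continuation, which is delicate precisely when $0<p<1$ or $0<q<1$, since $S^qI^p$ fails to be locally Lipschitz on the coordinate axes.

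For local existence and uniqueness I would argue as follows. When $p,q\ge 1$ the reaction terms are $C^1$ and standard parabolic existence theory yields a unique nonnegative classical solution on a maximal interval $[0,T_{\max})$. When $0<p<1$ (resp. $0<q<1$), assumption (A4) forces $I_0>0$ (resp. $S_0>0$) everywhere on $\bar\Omega$; by continuity the initial data stay bounded away from zero for a short time, so on that range $S^qI^p$ is locally Lipschitz and one again obtains a unique local classical solution (for instance by truncating the nonlinearity outside a neighborhood of the solution range to make it globally Lipschitz, solving the modified system, and observing that its solution solves the original problem as long as it remains positive). Uniqueness then holds within the class of strictly positive solutions via a Gronwall estimate on the locally Lipschitz range.

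The heart of the matter is positivity, together with the continuation it enables. For $I$, writing the second equation as $I_t-d_I\Delta I=[\beta S^{q}I^{p-1}-(\gamma+\mu)]I\ge -(\gamma+\mu)I$ on $\{I>0\}$ and using $I_0\ge,\not\equiv 0$ from (A4)(i), the strong maximum principle gives $I(x,t)>0$ for all $x\in\bar\Omega$ and $t>0$. For $S$ the situation bifurcates. If $q\ge 1$, the first equation reads $S_t-d_S\Delta S=-\beta S^{q-1}I^pS+\gamma I$, with a bounded-below zeroth-order coefficient and a nonnegative source $\gamma I\ge 0$, so the maximum principle yields $S>0$. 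If $0<q<1$, the coefficient $-\beta S^{q-1}I^p$ is unbounded as $S\downarrow 0$ and, as the Remark and Proposition \ref{ODE-result}-(i) indicate, $S$ could in principle be driven to zero in finite time in the absence of recovery; here (A4)(ii) supplies both $S_0>0$ and $\gamma\ge\sigma_0>0$, and I would build a strictly positive spatially homogeneous subsolution through the ODE comparison of Proposition \ref{ODE-result}, the recovery term $\gamma I>0$ furnishing the lower bound that keeps $S$ away from zero. This is the step I expect to be the main obstacle, both because of the lost Lipschitz regularity and because the quantitative lower bound must be reconciled with the a priori upper bound.

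Finally, global existence follows by continuation. On $[0,T_{\max})$ Theorem \ref{Theorem_bound} supplies the upper bound \eqref{Minfdd} with $M_\infty$ independent of $T_{\max}$, while the positivity just established keeps $(S,I)$ in the region where the solution is classical and unique; the standard blow-up criterion then forces $T_{\max}=\infty$. Hence the solution exists globally, is unique and strictly positive for $t>0$, and satisfies \eqref{th-a1}, while the dissipativity bound \eqref{th-a2} is exactly \eqref{Ninfdd} with $N_\infty$ depending only on the mass $N$ from \eqref{control-mass}, completing the proof.
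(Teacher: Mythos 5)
Your proposal is correct and follows essentially the same route as the paper: reduce \eqref{density} to \eqref{density1} with $s=0$, $r=1$, verify that $p>1$, $p=1$, $0<p<1$ fall under (H1)-(i), (H1)-(ii), (H2)-(i) respectively so that Theorems \ref{Theorem_bound0}, \ref{Theorem_boundp} and \ref{Theorem_bound2} supply \eqref{Minfdd}--\eqref{Ninfdd}, and combine this with a separate local existence/positivity/continuation argument (the paper's Lemma \ref{lemma_local}), in which $I>0$ comes from a linear supersolution comparison and $S>0$ in the delicate case $0<q<1$ comes from a comparison with an auxiliary problem whose positive source is the recovery term $\gamma I\ge\sigma_0\underline w$. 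The only cosmetic difference is that you sketch a spatially homogeneous ODE subsolution for the latter step while the paper uses the parabolic problem \eqref{auxi-a} and a contradiction at a first zero; both hinge on the same positive source and the monotonicity of $z\mapsto -z^q$.
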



\subsection{ Long-time behavior of \eqref{density}} In this section, we state our results on the long-time behavior of the solutions of \eqref{density}.

\noindent\textbf{(a) Case $\mu>0$ on $\bar\Omega\times[0,\infty)$}

\vskip5pt
 In this case, we further assume that
\begin{itemize}
\item[{\rm (A3)}] There exists $\sigma_0>0$ such that $\mu(x, t)\ge \sigma_0$ for all $x\in\bar\Omega$ and $t\ge 0$.
 \end{itemize}
{   Biologically, (A3) means that there are losses of infected individuals due to disease-induced mortality.}  Under (A3), our  main result about the dynamical behavior of the solution of \eqref{density} reads as follows.

 \begin{Theorem}\label{result1}
Assume that {\rm (A1)-(A3)} hold. Let $(S,I)$ be the unique solution of \eqref{density}. Then the following assertions hold:
 \begin{itemize}
 \item[\rm{(i)}] If $p\geq1$, we have $(S,I)\to(S_*,0)$ uniformly on $\bar\Omega$ as $t\to\infty$, where
  $S_*$ is a positive constant. Moreover,
  $S_*\in (0,  \sup_{\Omega\times(0,\infty)}{(\gamma+\mu)}/{\beta}]$ if $p=1$.
 \item[\rm{(ii)}] If $0<p<1$, we have
  $(S,I)\to(0,0)$ uniformly on $\bar\Omega$ as $t\to\infty$.
 \end{itemize}
 \end{Theorem}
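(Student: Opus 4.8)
The plan is to prove the theorem in three stages, valid for all $p>0$ up to the last one: first that the infected component vanishes, $\|I(\cdot,t)\|_{L^\infty(\Omega)}\to0$; second that the susceptible component homogenizes to a nonnegative constant $S_*$; and only then to decide the sign of $S_*$, where the dichotomy $p\ge1$ versus $0<p<1$ enters. For the first stage I would add the two equations and integrate, using \eqref{control-mass-d} and (A5): $\frac{d}{dt}\int_\Omega(S+I)\,dx=-\int_\Omega\mu I\,dx\le-\sigma_0\int_\Omega I\,dx\le0$. Hence $m(t):=\int_\Omega(S+I)\,dx$ decreases to a limit $m_\infty\ge0$ and $\int_0^\infty\!\int_\Omega I\,dx\,dt\le\sigma_0^{-1}(m(0)-m_\infty)<\infty$. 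By Theorem \ref{th-a} the solution is uniformly bounded, so the reaction terms are bounded in $L^\infty$; parabolic $L^r$-estimates and Sobolev embedding then give a uniform-in-$t$ H\"older bound for $(S,I)$ on $\bar\Omega\times[1,\infty)$ (only boundedness of the right-hand sides is used, so no lower bound on the exponents is needed). Writing $\phi(t)=\int_\Omega I\,dx$ and integrating the $I$-equation shows $\phi'$ is bounded, so $\phi$ is uniformly continuous; together with $\int_0^\infty\phi<\infty$ this forces $\phi(t)\to0$, and the uniform H\"older bound supplies an interpolation $\|I(\cdot,t)\|_{L^\infty}\le C\|I(\cdot,t)\|_{L^1}^{\theta}$ with $\theta\in(0,1)$, whence $\|I(\cdot,t)\|_{L^\infty}\to0$.

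For the second stage, since $I\to0$ uniformly and $S$ is bounded, the forcing $F:=-\beta S^qI^p+\gamma I$ tends to $0$ uniformly. Setting $w=S-\bar S(t)$ with $\bar S(t)=\frac1{|\Omega|}\int_\Omega S$, an $L^2$ energy estimate and the Poincar\'e--Wirtinger inequality give $\frac{d}{dt}\|w\|_{L^2}\le-d_S\lambda_1\|w\|_{L^2}+\|F-\bar F\|_{L^2}$, where $\lambda_1>0$ is the first nonzero Neumann eigenvalue; as the forcing vanishes, $\|w\|_{L^2}\to0$, and the uniform H\"older bound upgrades this to $\|S-\bar S(t)\|_{L^\infty}\to0$. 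Combining $m(t)\to m_\infty$ with $\int_\Omega I\to0$ yields $\bar S(t)\to m_\infty/|\Omega|=:S_*$, so $S\to S_*$ uniformly; again this holds for all $p>0$.

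The third stage decides the sign of $S_*$. If $0<p<1$, suppose $S_*>0$; then $S\ge S_*/2$ eventually, and factoring $\beta S^qI^p-(\gamma+\mu)I=I^p\bigl(\beta S^q-(\gamma+\mu)I^{1-p}\bigr)$, with $\beta\ge\sigma_0$ (A3) and $\|I\|_{L^\infty}^{1-p}\to0$, gives $I_t-d_I\Delta I\ge cI^p$ with $c>0$ for large $t$; integrating and using $I^p\ge M^{p-1}I$ (as $I\le M$, $p<1$) yields $\frac{d}{dt}\int_\Omega I\ge cM^{p-1}\int_\Omega I$, so $\int_\Omega I$ cannot tend to $0$, a contradiction, hence $S_*=0$ and $(S,I)\to(0,0)$. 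If $p\ge1$ I would show $S_*>0$ by contradiction, comparing $\psi(t)=\min_{\bar\Omega}S(\cdot,t)$ with the spatially homogeneous subsolution solving $\underline\sigma'=-\sigma^0\|I\|_{L^\infty}^p\underline\sigma^q$ (obtained by discarding $\gamma I\ge0$). When $q<1$, (A4)(ii) supplies $\gamma\ge\sigma_0$: assuming $S_*=0$, the identity $\frac{d}{dt}\int_\Omega S=-\int\beta S^qI^p+\int\gamma I\ge(\sigma_0-\sigma^0M^{p-1}\|S\|_{L^\infty}^q)\int_\Omega I$ shows $\int_\Omega S$ is eventually nondecreasing, yet it converges to $0$ while staying positive, a contradiction. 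When $q\ge1$, assuming $S_*=0$ makes $\beta S^qI^{p-1}\to0$, so (A5) forces $I_t-d_I\Delta I\le-\tfrac{\sigma_0}{2}I$ for large $t$, hence $\|I\|_{L^\infty}$ decays exponentially and $\int^\infty\|I\|_{L^\infty}^p<\infty$; fed into the comparison inequality for $\psi$ (with $\psi^q\le M^{q-1}\psi$) this bounds $\psi$ below by a positive constant, contradicting $S_*=\lim\psi=0$. For the borderline $p=1$ the $I$-equation is linear, $I_t-d_I\Delta I=[\beta S^q-(\gamma+\mu)]I$; were $S_*>\sup_{\Omega\times(0,\infty)}(\gamma+\mu)/\beta$, the limiting reaction coefficient would be positive and the principal eigenvalue of the asymptotic linear problem positive, forcing $\|I\|$ to grow and contradicting $I\to0$, which yields the stated bound.

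The first two stages are robust; the crux, and the main obstacle, is the sign of $S_*$ when $p\ge1$. The two subcases demand genuinely different mechanisms---the recovery source guaranteed by (A4)(ii) when $q<1$, and the self-induced exponential decay of $I$ forced by (A5) when $q\ge1$---and the $q\ge1$ subcase is the most delicate, since one must bootstrap the decay of $\|I\|_{L^1}$ to integrability of $\|I\|_{L^\infty}^p$ and then control $\min_{\bar\Omega}S$ through a comparison principle adapted to the Neumann condition. A further care point throughout is that $S^qI^p$ fails to be Lipschitz when $p$ or $q$ lies in $(0,1)$, so the comparison and regularity arguments must be executed in the region $S,I>0$ secured by Theorem \ref{th-a}.
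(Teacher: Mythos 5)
Your proposal is correct and follows the same three-stage architecture as the paper: (1) monotonicity of the total mass plus uniform parabolic regularity and a Barbalat-type argument (the paper packages this as Lemma \ref{lem1}) to get $I\to 0$ uniformly; (2) Poincar\'e to homogenize $S$ to the constant $S_*=m_\infty/|\Omega|$; (3) a four-way case analysis on the sign of $S_*$. The differences are at the level of individual sub-arguments. In stage 2 the paper applies Lemma \ref{lem1} to $\phi(t)=\tfrac12\int_\Omega S^2$ with $\psi(t)=d_S\int_\Omega|\nabla S|^2$ and $g(t)=C\sigma^0\int_\Omega I$ (using the integrability of $\int_\Omega I$), whereas you run a direct differential inequality for $\|S-\bar S(t)\|_{L^2}$ driven by the uniformly vanishing forcing; both work. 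In the cases $0<p<1$ and $p\ge 1$, $q<1$, the paper derives pointwise lower bounds via the parabolic comparison principle ($I\ge\min_{\bar\Omega}I(\cdot,T_0)>0$, resp.\ $S\ge\min_{\bar\Omega}S(\cdot,T_1)>0$), while you integrate over $\Omega$ and contradict the convergence of $\int_\Omega I$ or $\int_\Omega S$; your integrated versions are slightly cleaner in that they avoid invoking the strong maximum principle for the non-Lipschitz nonlinearity, at the cost of nothing. The $p\ge1$, $q\ge1$ subcase is essentially identical to the paper's (exponential decay of $\|I\|_\infty$, then an integrable-coefficient comparison for $S$). One caveat on the final claim for $p=1$: what the eventual-positivity argument actually delivers is $S_*^{\,q}\le\sup_{\Omega\times(0,\infty)}(\gamma+\mu)/\beta$ (this is what the paper's proof states), so your phrase ``were $S_*>\sup(\gamma+\mu)/\beta$'' should read $S_*^{\,q}$ unless $q=1$; this discrepancy is inherited from the theorem statement itself and is not a defect of your argument.
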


\noindent\textbf{(b) Case $\mu=0$ on $\bar\Omega\times[0,\infty)$} 
%

\vskip5pt {   In this case, the disease is not fatal.} We further assume that the coefficients are periodic in time, i.e.,

\begin{enumerate}
\item[(A4)] The functions $\beta$ and $\gamma$ are periodic  with the common period $\omega>0$, i.e., $\beta(x, t+\omega)=\beta(x, t)$ and $\gamma(x, t+\omega)=\gamma(x, t)$ for all $x\in\bar\Omega$ and $t\ge 0$.
\end{enumerate}

{   Biologically, (A4) means that the epidemic is seasonal.} We have the following result about the uniform persistence of the solutions and the existence of a positive $\omega$-periodic  solution of \eqref{density}. The definition of \emph{the basic reproduction number}, $\mathcal{R}_0$,  is given by \eqref{Rdef}. 
\begin{Theorem}[Uniform persistence]\label{th-a4}
Let $\mu=0$. Assume that {\rm(A1)-(A2)} and {\rm(A4)} hold, and let $0<p\leq1$. We further assume $\mathcal{R}_0>1$ if $p=1$.
Then there exists $\epsilon_0>0$ depending only on $N$ such that for any solution $(S, I)$ of \eqref{density}, we have
\begin{equation}\label{persist}
\liminf_{t\rightarrow\infty} S(x, t),\ \  \liminf_{t\rightarrow\infty}I(x, t) \ge \epsilon_0,
\end{equation}
uniformly for $x\in\bar\Omega$. Moreover, \eqref{density} has at least one positive $\omega$-periodic solution.
\end{Theorem}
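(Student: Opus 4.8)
The plan is to regard \eqref{density} with $\mu=0$ as a dissipative $\omega$-periodic dynamical system and to apply uniform-persistence theory to its period map. Since $\mu=0$, identity \eqref{control-mass-d} reduces to $\frac{d}{dt}\int_\Omega(S+I)\,dx=0$, so the total mass $\int_\Omega(S+I)\,dx\equiv N$ is conserved; together with the positivity and the uniform bounds \eqref{th-a1} of Theorem~\ref{th-a}, this confines every orbit to a bounded, forward-invariant region. By (A6) the time-$\omega$ solution operator $Q:(S_0,I_0)\mapsto(S(\cdot,\omega),I(\cdot,\omega))$ is a well-defined Poincar\'e map on the space $X$ of admissible positive initial data, parabolic smoothing together with \eqref{th-a1} makes $Q$ compact, and \eqref{th-a2} makes it point dissipative. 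I would stress from the outset that, because $S^qI^p$ fails to be Lipschitz when $p$ or $q$ lies below $1$, $X$ is naturally an \emph{incomplete} metric space on which $Q$ is only defined on the positive cone; this is precisely why the classical Hale--Waltman/Magal--Zhao persistence theory cannot be quoted verbatim and why I would invoke the generalized results for semiflows on incomplete metric spaces stated in the appendix.

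Next I would localize the boundary dynamics. Set $X_0=\{(S_0,I_0)\in X:\ I_0\not\equiv0\}$ and $\partial X_0=X\setminus X_0$; the strong maximum principle makes $X_0$ forward invariant under $Q$. On $\partial X_0$ one has $I\equiv0$, whence the $S$-equation becomes the heat equation $S_t-d_S\Delta S=0$ with Neumann data, and mass conservation forces $S(\cdot,t)\to\bar{S}:=N/|\Omega|$ uniformly. Thus the maximal invariant set of $Q$ in $\partial X_0$ is the single disease-free fixed point $M=\{(\bar{S},0)\}$, which is automatically isolated and acyclic. The crux of the proof is to show that $M$ is a \emph{uniform weak repeller} for $X_0$: there is $\eta>0$, depending only on $N$, with $\limsup_{m\to\infty}\mathrm{dist}\,(Q^m(S_0,I_0),M)\ge\eta$ for all $(S_0,I_0)\in X_0$.

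Here the argument bifurcates on $p$. For $p=1$, linearizing the $I$-equation about $(\bar{S},0)$ produces the periodic-parabolic operator $\partial_t-d_I\Delta-(\beta(x,t)\bar{S}^{\,q}-\gamma(x,t))$, whose principal Floquet exponent has the same sign as $\mathcal R_0-1$; assuming $\mathcal R_0>1$, this exponent is positive, so a contradiction argument (if $\|I(\cdot,t)\|_\infty$ stayed below a threshold then $S\to\bar{S}$ and $I$ would grow along the unstable periodic eigenmode, contradicting smallness) yields the weak-repeller property. For $0<p<1$ no threshold is needed: near $I=0$ the reaction obeys $\beta S^qI^p-\gamma I\ge(\sigma_0\underline{S}^{\,q}I^{\,p-1}-\sigma^0)I$ on the region where $S\ge\underline{S}>0$, and since $I^{\,p-1}\to\infty$ as $I\to0^+$, any sufficiently small positive $I$ is forced to increase; I would make this precise by comparing $I$ from below with the spatially homogeneous subsolution solving $\underline{I}'=\sigma_0\underline{S}^{\,q}\underline{I}^{\,p}-\sigma^0\underline{I}$ (using the scalar ODE facts of the appendix), which is repelled from $0$. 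I expect this uniform, $N$-independent weak-repeller estimate — rather than the eigenvalue or subsolution computation itself — to be the main obstacle, since it must be established on the incomplete, non-Lipschitz phase space.

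Finally, feeding compactness, point dissipativity, and the acyclic isolated covering $\{M\}$ together with the weak-repeller property into the generalized persistence theorem of the appendix yields uniform persistence of $Q$ relative to $(X_0,\partial X_0)$, i.e. a uniform lower bound for $\liminf_{t\to\infty}I(\cdot,t)$; since all constants trace back to the dissipativity bound $N_\infty$, this bound depends only on $N$, giving the $I$-part of \eqref{persist}. The corresponding lower bound for $S$ is then obtained from the $S$-equation itself: once $I$ is uniformly bounded below, the recovery influx $\gamma I$ acts as a positive source that, by a comparison argument near the face $\{S\equiv0\}$, keeps $S$ uniformly away from $0$ — this is the step that uses the presence of recovery and completes \eqref{persist}. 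The same theory provides a global attractor $A_0\subset X_0$ for $Q$, and the accompanying fixed-point theorem for uniformly persistent compact maps furnishes a fixed point $(S^*,I^*)\in X_0$; by periodicity this fixed point is exactly a positive $\omega$-periodic solution of \eqref{density}, completing the proof.
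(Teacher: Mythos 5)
Your overall architecture does not match the situation, and the mismatch is exactly the difficulty the paper is built to circumvent. You propose the classical route: decompose the phase space as $X_0\cup\partial X_0$ with $\partial X_0=\{I\equiv 0\}$, identify the boundary attractor $M=\{(N/|\Omega|,0)\}$, show it is an isolated, acyclic, uniform weak repeller, and invoke a persistence theorem. That route requires a continuous semiflow on a complete metric space containing $\partial X_0$ together with all limit points of $X_0$-orbits. Here, because $S^qI^p$ is not Lipschitz when $p<1$ or $q<1$, the Poincar\'e map is only well defined on strictly positive pairs, and the closure of that set contains data where $S$ or $I$ vanishes somewhere but not identically, where uniqueness (hence the map itself) fails. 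The appendix does not supply an acyclicity theorem for this setting: Propositions \ref{thm_comp} and \ref{thm_persistent} take as input \emph{weak $\rho$-uniform persistence} --- a single $\epsilon$ with $\limsup_{n}\rho(\mathcal{P}^n(S_0,I_0))\ge\epsilon$ for \emph{all} admissible initial data, where $\rho=\min\{\min_{\bar\Omega}S,\min_{\bar\Omega}I\}$ --- not a repeller property of a boundary invariant set. So your step ``feed the acyclic isolated covering $\{M\}$ and the weak-repeller property into the generalized persistence theorem of the appendix'' cannot be executed: you are supplying an input those propositions do not accept. What actually has to be proved is the content of Lemmas \ref{lemma_weak}--\ref{lemma_weak2}: a uniform-in-initial-data weak lower bound on \emph{both} $\min_{\bar\Omega}S$ and $\min_{\bar\Omega}I$, which the paper obtains by time-translating a putative counterexample sequence of solutions, extracting a bounded entire solution of the limit system, and classifying such entire solutions. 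Your repeller analysis near $(N/|\Omega|,0)$ reproduces one piece of this (growth of $I$ for orbits near the disease-free state, via $\mathcal{R}_0>1$ when $p=1$ and via the singularity of $I^{p-1}$ when $p<1$), but it never addresses the possibility that $\min_{\bar\Omega}S$ becomes arbitrarily small along a sequence of orbits --- ``Case 1'' of Lemma \ref{lemma_weak} --- which is needed both for \eqref{persist} and, through $\rho$, for the fixed-point theorem that yields the periodic solution.

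Your fallback for the $S$-bound --- once $I$ is bounded below, the influx $\gamma I$ keeps $S$ away from $0$ --- requires $\gamma$ bounded below by a positive constant, which the hypotheses guarantee only when $0<q<1$ (assumption (A4)-(ii)); the theorem is asserted with possibly degenerate $\gamma$ when $q\ge1$, so your comparison gives nothing there. Even where it works, this bound is produced \emph{after} the persistence machinery, whereas Proposition \ref{thm_fix} is applied in the metric $d_0$ built from $\rho$, so $\rho$-uniform persistence controlling both components must be in hand \emph{before} the fixed-point argument; you would also still need to verify that $\mathcal{P}$ maps $\rho$-strongly bounded sets to $\rho$-strongly bounded sets, which the paper does by the explicit subsolution estimates \eqref{lbound1}--\eqref{lbound2}. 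Two smaller points: a uniform weak repeller of $M$ in the product metric only gives $\limsup_t\mathrm{dist}((S,I),M)\ge\eta$, which can be realized by the $S$-component alone and so does not by itself yield $\limsup_t\min_{\bar\Omega}I\ge\eta$ (a Harnack-type step is needed); and your claim that small $\|I\|_\infty$ forces $S\to N/|\Omega|$ is fine for a single orbit but must be made uniform over all orbits for $\epsilon_0$ to depend only on $N$ --- that uniformity is precisely what the compactness statements \eqref{ine1}--\eqref{ine3} in the paper deliver.
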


%
%

\begin{Remark}\label{re-p1} We would like to make the following comments for Theorem \ref{th-a4}.
 \begin{itemize}
 \item[\rm{(i)}] When $p=1$, we suspect that if $\mathcal{R}_0>1$, \eqref{density} admits a unique positive equilibrium which is globally attractive, whereas the unique disease-free ($I$-component is zero) equilibrium is globally attractive if $\mathcal{R}_0\leq1$. One may follow the same analysis as in \cite[Theorems 4.1, 4.2]{DW} to prove this for the special cases when either $\beta,\,\gamma$ are positive constants or $d_S=d_I$. 

 \item[\rm{(ii)}] When $p>1$, Proposition \ref{ODE-SIS} of the appendix analyzes the long-time behavior of the ODE version of \eqref{density} which suggests that the dynamics of \eqref{density} depends on the initial data.

 \end{itemize}
\end{Remark}


\section{Proof of Theorem \ref{Theorem_boundaa}}

\subsection{Proof of Theorem \ref{Theorem_boundaa} under condition (H1)-(i) or (H1)-(ii)}

In this subsection, we obtain the $L^\infty$-bounds of the solutions of \eqref{density1}
provided that one of (H1) is fulfilled. In what follows, we focus on  two cases: {\rm (H1)-(i)}:\ $p>r\geq0$ and $sp-rq\le p-r$; {\rm (H1)-(ii)}:\ $p=r\geq0$ and $0\leq s<q$. The other two cases in (H1) can be handled by exchanging the roles of $u$ and $v$.

We begin with the following useful lemma.
\begin{Lemma}\label{lemma_inequality}
Assume that  {\rm (H1)-(i)} holds. Then for any $\epsilon>0$, we have
\begin{equation}\label{pq}
u^{s}v^r\le \epsilon u^{q}v^{p}+A_1 u+A_2,\ \ \forall u\ge 1,\ v\ge 0,
\end{equation}
where $A_1$ and $A_2$ are positive constants depending only on $\epsilon$.
\end{Lemma}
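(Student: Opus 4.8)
The plan is to read \eqref{pq} as an instance of the weighted Young inequality, after factoring the left-hand side so that the entire $v$-dependence is captured by a power of the ``good'' term $u^qv^p$. Assume first that $r>0$. For $u\ge 1$ and $v\ge 0$ I would use the algebraic identity
\[
u^s v^r = u^{\,s-\frac{qr}{p}}\,\bigl(u^q v^p\bigr)^{r/p},
\]
which is exact because on the right the exponent of $v$ is $\frac{p\cdot r}{p}=r$ and the exponent of $u$ is $\bigl(s-\tfrac{qr}{p}\bigr)+\tfrac{qr}{p}=s$.

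Next I would apply Young's inequality in the form $ab\le \delta\,a^{\alpha}+C(\delta)\,b^{\alpha'}$ to the two factors $a=u^{\,s-qr/p}$ and $b=\bigl(u^q v^p\bigr)^{r/p}$, with the conjugate exponents $\alpha=\frac{p}{p-r}$ and $\alpha'=\frac{p}{r}$. These are admissible: $p>r>0$ forces $\alpha,\alpha'>1$, and $\frac1\alpha+\frac1{\alpha'}=1$. The exponent $\alpha'$ is chosen precisely so that $b^{\alpha'}=u^q v^p$, while $a^{\alpha}=u^{\theta}$ with
\[
\theta=\Bigl(s-\frac{qr}{p}\Bigr)\frac{p}{p-r}=\frac{sp-qr}{p-r}.
\]
By a suitable choice of $\delta$ the coefficient $C(\delta)$ of $u^qv^p$ can be made equal to the prescribed $\epsilon$, at the cost of a resulting constant $A_3=A_3(\epsilon)$ multiplying $u^{\theta}$.

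The crux is the hypothesis (H1)-(i): since $p-r>0$, the assumption $sp-rq\le p-r$ is exactly the statement $\theta\le 1$. Hence for $u\ge 1$ we have $u^{\theta}\le u+1$ (indeed $u^{\theta}\le u$ when $0\le\theta\le 1$ and $u^{\theta}\le 1$ when $\theta<0$), so
\[
u^s v^r\le \epsilon\, u^q v^p + A_3\, u + A_3,
\]
which is \eqref{pq} with $A_1=A_2=A_3$, both depending only on $\epsilon$ and the fixed exponents. Finally I would dispose of the borderline case $r=0$ directly: there $sp-rq\le p-r$ reduces to $s\le 1$, whence $u^s v^r=u^s\le u$ for $u\ge 1$ and \eqref{pq} holds trivially. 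I do not anticipate any genuine difficulty; the only points requiring care are checking that the Young exponents are conjugate and strictly larger than $1$, and confirming that the surviving power $u^{\theta}$ is dominated by $u+1$ exactly when (H1)-(i) holds.
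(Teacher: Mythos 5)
Your proof is correct and follows essentially the same route as the paper: the identical factorization $u^sv^r=\bigl(u^qv^p\bigr)^{r/p}u^{\,s-qr/p}$, Young's inequality with conjugate exponents $p/r$ and $p/(p-r)$, and the observation that {\rm (H1)-(i)} forces the residual exponent $(ps-qr)/(p-r)$ to be at most $1$ so that the leftover power of $u$ is dominated by $u+1$ for $u\ge 1$. Your explicit treatment of the borderline case $r=0$ (where the Young exponent $p/r$ degenerates) is a small point of extra care that the paper passes over silently.
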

\begin{proof} By the well-known Young's inequality, for any $\epsilon>0$, there exists $C_\epsilon>0$ such that
\begin{eqnarray*}
u^{s}v^r &=& (u^{q\frac{r}{p}}v^r)  u^{s-q\frac{r}{p}} \\
&\le& \epsilon u^{q}v^p +C_\epsilon u^{(s-q\frac{r}{p})\frac{p}{p-r}}\\
&=&  \epsilon u^{q}v^p +C_\epsilon u^{\frac{ps-qr}{p-r}},\ \ \forall u,\, v\ge 0.
\end{eqnarray*}
Thus, the claimed inequality follows from the conditions $ps-qr\le p-r$ and $u\geq1$.
\end{proof}

{   For Lemmas \ref{lemma_s}-\ref{lemma_s1}, Lemma \ref{lemma_dual} and Lemma \ref{lemma_t}, we suppose that   $\beta>0$, $\gamma, \mu\ge 0$, and {\rm (H1)-(i)} holds. Let $(u, v)$ be a nonnegative  classical solution of \eqref{density1}. }

\begin{Lemma}\label{lemma_s} For any nonnegative integer $k$, there exists $M_{2^k}>0$ depending on the initial data such that
\begin{equation}\label{mk}
\|u(\cdot, t)\|_{L^{2^k}(\Omega)}^{2^k} \le M_{2^k},\  \ \forall t\ge 0.
\end{equation}
Moreover, there exists $N_{2^k}>0$ depending only on $N$ such that, for any global classical solution $(u, v)$, the following holds:
\begin{equation}\label{nk}
\limsup_{t\rightarrow\infty} \|u(\cdot, t)\|_{L^{2^k}(\Omega)}^{2^k} \le N_{2^k}.
\end{equation}
\end{Lemma}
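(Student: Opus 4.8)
The plan is to prove \eqref{mk} and \eqref{nk} simultaneously by induction on $k$, using an energy estimate for $\|u\|_{L^{2^k}}^{2^k}$ and the structural inequality from Lemma \ref{lemma_inequality} to absorb the ``bad'' reaction term $u^sv^r$. The base case $k=0$ is exactly the $L^1$ mass control: since $u\le u+v$, the bound \eqref{control-mass} (adapted to \eqref{density1}) gives $\|u(\cdot,t)\|_{L^1(\Omega)}\le N$ for all $t\ge 0$, which yields both \eqref{mk} and \eqref{nk} with $M_1=N_1=N$. (This is where assumption (A3), $\beta>\sigma_0$, will be needed: the mass dissipation for \eqref{density1} comes from the sign of the combined reaction, and the positivity of $\beta$ controls the loss term.)

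For the inductive step, assume \eqref{mk}-\eqref{nk} hold at level $k$ and let $m=2^k$, so we target level $2m=2^{k+1}$. First I would multiply the $u$-equation by $u^{2m-1}$ and integrate over $\Omega$. The diffusion term produces, after integration by parts and the Neumann condition, a negative gradient term $-d_u\,c_m\int_\Omega |\nabla u^m|^2$, which is the crucial dissipation we will exploit. The reaction contributes $\int_\Omega u^{2m-1}\!\left(-\beta u^q v^p+\gamma u^s v^r\right)$; the first term is $\le 0$ and may be discarded, while the second is the obstacle. Applying Lemma \ref{lemma_inequality} with a small $\epsilon>0$ (together with the bound $\gamma\le\sigma^0$ from (A1)), the term $u^{2m-1}\gamma u^s v^r$ is dominated by $\epsilon\,u^{2m-1}u^qv^p + A_1 u^{2m}+A_2 u^{2m-1}$. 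The key point is that the $\beta$-term carries a \emph{negative} copy of $u^{2m-1}u^qv^p$, so choosing $\epsilon$ small (relative to $\sigma_0$) lets the good term absorb the $\epsilon$-term. What remains on the right is a polynomial in $u$ of degree $2m$, controlled by lower $L^p$-norms of $u$ via the induction hypothesis and interpolation.

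The main obstacle is closing the differential inequality for $y(t):=\|u\|_{L^{2m}}^{2m}$ so that it produces a \emph{uniform-in-time} bound and an \emph{asymptotic} bound depending only on $N$. After the above steps one arrives at
\begin{equation}\label{eq-ode-sketch}
\frac{d}{dt}\int_\Omega u^{2m}\,dx + d_u c_m\int_\Omega |\nabla u^m|^2\,dx \le C_1\int_\Omega u^{2m}\,dx + C_2,
\end{equation}
which by itself only gives exponential growth. To fix this I would convert the gradient dissipation into decay on the full $L^{2m}$-norm: apply the Gagliardo--Nirenberg--Sobolev inequality to $w:=u^m$, writing $\|w\|_{L^2}^2$ in terms of $\|\nabla w\|_{L^2}$ and the lower-order norm $\|w\|_{L^{2/m}}=\|u\|_{L^1}\le N$, which is bounded by the induction hypothesis (indeed by the base case). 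This produces a term $+\,\delta\,y$ on the left that dominates $C_1 y$, turning \eqref{eq-ode-sketch} into a dissipative inequality of the form $y'\le -\delta' y + C_3$ with $\delta'>0$ and $C_3$ depending only on $N$. A standard comparison (Gronwall) argument then yields the uniform bound \eqref{mk} from the initial value $y(0)$, and the $\limsup$ bound \eqref{nk} with $N_{2^k}=C_3/\delta'$ depending only on $N$. The delicate accounting is to verify that every constant entering $C_3$ and $\delta'$ is controlled purely by $N$ (not by the initial data) so that \eqref{nk} genuinely depends only on $N$; this is precisely the dichotomy recorded in the two conclusions of the lemma, and is where the heterogeneous, time-dependent coefficients must be bounded uniformly using (A1).
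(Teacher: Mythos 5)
Your overall architecture matches the paper's: dyadic induction on $k$, an energy estimate for $\int_\Omega u^{2^k}dx$, absorption of the bad term $\gamma u^s v^r$ into $-\beta u^q v^p$ via Lemma \ref{lemma_inequality} and (A3), and an interpolation inequality converting the gradient dissipation into a negative multiple of $\int_\Omega u^{2^k}dx$ so that a Gronwall comparison yields both \eqref{mk} and \eqref{nk}. However, there is a genuine gap in the absorption step. The inequality \eqref{pq} of Lemma \ref{lemma_inequality} is only valid for $u\ge 1$: its proof produces the term $C_\epsilon u^{(ps-qr)/(p-r)}$, whose exponent is merely $\le 1$ under (H1)-(i) and can be negative (e.g.\ $s=0$, $r=1$, $p>1$ gives exponent $-q/(p-1)<0$), so the bound $C_\epsilon u^{(ps-qr)/(p-r)}\le A_1u+A_2$ fails as $u\to 0^+$. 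Your step ``$u^{2m-1}\gamma u^sv^r\le \epsilon\, u^{2m-1}u^qv^p+A_1u^{2m}+A_2u^{2m-1}$'' is therefore unjustified on the set $\{u<1\}$, and on that set the term $\gamma u^{s+2m-1}v^r$ is bounded only by $\sigma^0 v^r$, which is not controlled by anything available at this stage when $r>1$ (integrability of powers of $v$ only comes later, in Lemma \ref{lemma_dual}). The paper's fix is to test the equation with $\bar u^{2^k-1}$, where $\bar u:=(u-1)_+$, so that the multiplier vanishes exactly where \eqref{pq} is unavailable, and then to recover the bound on $u$ from $u\le\bar u+1$. You need this truncation (or an equivalent device) for the inductive step to go through.

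A secondary point concerns the interpolation step. You propose to control $\|w\|_{L^2}^2$ with $w=u^m$ by $\|\nabla w\|_{L^2}^2$ and $\|w\|_{L^{2/m}}$, which you identify with $\|u\|_{L^1}\le N$. The identification is off (in fact $\|w\|_{L^{2/m}}=\|u\|_{L^2}^m$), and descending all the way to the $L^1$-norm of $u$ in a single step would require a Gagliardo--Nirenberg inequality with a quasi-norm exponent below $1$, which is not the standard statement and which you do not justify; it would also render the dyadic induction superfluous. The paper instead applies the standard inequality $\|w\|_{L^2}^2\le\epsilon_*\|\nabla w\|_{L^2}^2+C_{\epsilon_*}\|w\|_{L^1}^2$ to $w=\bar u^{2^{k-1}}$, so that the low-order quantity is precisely $\int_\Omega\bar u^{2^{k-1}}dx$, i.e.\ exactly what the induction hypothesis one dyadic level down furnishes (with constant $M_{2^{k-1}}$, respectively $N_{2^{k-1}}$ for the asymptotic bound); this is the reason the induction proceeds in powers of $2$.
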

\begin{proof} We will use an induction argument to derive \eqref{mk} and \eqref{nk}.
Apparently, \eqref{mk} and \eqref{nk} hold with $M_{1}=N_{1}=N$ when $k=0$.

Let $k\ge 1$. Suppose that there exists $M_{2^{k-1}}>0$ depending on the initial data such that
\begin{equation}\label{mk1}
\|u(\cdot, t)\|_{L^{2^{k-1}}(\Omega)}^{2^{k-1}} \le M_{2^{k-1}}, \ \ \forall t\ge 0,
\end{equation}
and there exists $N_{2^{k-1}}\ge 0$ depending only on $N$ such that
\begin{equation}\label{nk1}
\limsup_{t\rightarrow\infty}\|u(\cdot, t)\|_{L^{2^{k-1}}(\Omega)}^{2^{k-1}} \le N_{2^{k-1}}.
\end{equation}

For notational convenience, let us set
 $$
 \bar u:=(u-1)_+=\max\{u-1, 0\}.
 $$
Multiplying both sides of the first equation of \eqref{density1} by $\bar u^{2^k-1}$ and integrating on $\Omega$, we obtain
\begin{equation}\label{e0}
\frac{1}{2^k}\frac{d}{dt}\int_\Omega \bar u^{2^k} dx \le -\frac{2^k-1}{2^{2k-2}} d_u\int_\Omega |\nabla \bar u^{2^{k-1}}|^2dx + \int_\Omega (-\beta u^{q}v^p+ \gamma u^{s}v^r)\bar u^{2^k-1} dx, \ \ \forall t\ge 0.
\end{equation}
By Lemma \ref{lemma_inequality}, for any $\epsilon>0$, one can find two positive constants $A_1=A_1(\epsilon)$ and $A_2=A_2(\epsilon)$ such that
$$
u^{s}v^r\le \epsilon u^{q} v^p + A_1 u+ A_2,\ \ \forall u\geq1,\,v\geq0.
$$
{   If $\gamma>0$, choosing $\epsilon=\beta/\gamma$} and using the definition of $\bar u$ and H\"{o}lder's inequality, we have
\begin{equation}
\label{e1}
\int_\Omega (-\beta u^{q}v^p+ \gamma u^{s}v^r)\bar u^{2^k-1} dx \le  C_1\int_\Omega \bar u^{2^k}dx+C_2, \ \ \forall t\ge 0.
\end{equation}
where $C_1=2{   \gamma}A_1$, $C_2={   \gamma}(A_1+A_2)|\Omega|$. {   If $\gamma=0$, \eqref{e1} holds with $C_1=C_2=1$. }

We recall the following interpolation inequality: for any $\epsilon_*>0$, there exists $C_{\epsilon_*}>0$ such that
\begin{equation}\label{xii}
\|w\|_{L^2(\Omega)}^2 \le \epsilon_* \|\nabla w\|_{L^2(\Omega)}^2+C_{\epsilon_*}\|w\|_{L^1(\Omega)}^2, \ \quad \forall w\in W^{1, 2}(\Omega).
\end{equation}

Setting $\epsilon_*=\frac{2^k-1}{2C_12^{2k-2}}d_u$ and $w=\bar u^{2^{k-1}}$ in \eqref{xii}, we have
\begin{eqnarray}\label{e2}
\ \ \ \ \ \ \ \ -\frac{2^k-1}{2^{2k-2}} d_u\int_\Omega |\nabla \bar u^{2^{k-1}}|^2dx \le  -2C_1 \int_\Omega \bar u^{2^k} dx + C_3\left(\int_\Omega \bar u^{2^{k-1}}dx\right)^2, \ \ \forall t\ge 0,
\end{eqnarray}
where $C_3=2C_1C_{\epsilon_*}/{\epsilon_*}$. Combining \eqref{e0}-\eqref{e2}, we deduce
\begin{equation}\label{ubar}
\frac{1}{2^k} \frac{d}{dt} \int_\Omega \bar u^{2^k} dx \le -C_1\int_\Omega \bar u^{2^k} dx +C_2+C_3\left(\int_\Omega \bar u^{2^{k-1}}dx\right)^2, \ \ \forall t\ge 0.
\end{equation}

Since $0\le \bar u\le u$, \eqref{mk1}-\eqref{nk1} hold with $u$ replaced by $\bar u$. It then follows from \eqref{ubar} that \eqref{mk}-\eqref{nk} hold  with $u$ replaced by $\bar u$, where
$$
M_{2^k}= \max\left\{\frac{C_2+C_3M_{2^{k-1}}^2}{C_1}+1,   \int_\Omega S_0^{2^k}dx \right\}, \ \ \ N_{2^k}=\frac{C_2+C_3N_{2^{k-1}}^2}{C_1}.
$$
Therefore, the lemma follows from the fact  $u\le \bar u+1$.
\end{proof}

\vskip5pt
In view of Lemma \ref{lemma_s} and the embedding $L^{a_1}(\Omega)\subset L^{a_2}(\Omega)$ for any $a_1>a_2$, we have the following observation.

\begin{Lemma}\label{lemma_s1}
For any constant $a\geq1$, there exists $M_{a}>0$ depending on the initial data such that
\begin{equation*}
\|u(\cdot, t)\|_{L^{a}(\Omega)} \le M_a, \ t\ge 0.
\end{equation*}
Moreover, there exists $N_a>0$ depending only on $N$ such that, for any global classical solution $(u, v)$, the following holds:
\begin{equation*}
\limsup_{t\rightarrow\infty} \|u(\cdot, t)\|_{L^{a}(\Omega)}^{a} \le N_a.
\end{equation*}
\end{Lemma}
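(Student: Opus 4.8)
The goal is to upgrade the $L^{2^k}$-bounds of Lemma \ref{lemma_s} to $L^a$-bounds for arbitrary real $a\ge 1$. The plan is to exploit the monotonicity of $L^p$-norms on a finite-measure domain. Since $\Omega$ is bounded, $|\Omega|<\infty$, and for any two exponents $a_1>a_2\ge 1$ the embedding $L^{a_1}(\Omega)\subset L^{a_2}(\Omega)$ holds together with the quantitative inequality
\begin{equation}\label{holderembed}
\|w\|_{L^{a_2}(\Omega)} \le |\Omega|^{\frac{1}{a_2}-\frac{1}{a_1}}\,\|w\|_{L^{a_1}(\Omega)},
\end{equation}
which is an immediate consequence of H\"older's inequality. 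Thus a bound on a larger norm automatically controls every smaller norm.

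First I would fix an arbitrary $a\ge 1$ and choose a nonnegative integer $k$ large enough that $2^k\ge a$; such a $k$ always exists. Applying Lemma \ref{lemma_s} at this $k$ yields $M_{2^k}>0$ depending on the initial data with $\|u(\cdot,t)\|_{L^{2^k}(\Omega)}^{2^k}\le M_{2^k}$ for all $t\ge 0$, so that $\|u(\cdot,t)\|_{L^{2^k}(\Omega)}\le M_{2^k}^{1/2^k}$. Setting $a_1=2^k$ and $a_2=a$ in \eqref{holderembed} then gives
\begin{equation}\label{finala}
\|u(\cdot,t)\|_{L^a(\Omega)} \le |\Omega|^{\frac{1}{a}-\frac{1}{2^k}}\,M_{2^k}^{1/2^k}=:M_a, \quad \forall t\ge 0,
\end{equation}
which is exactly the first assertion, with $M_a$ depending only on the initial data (through $M_{2^k}$) and on the fixed geometric quantity $|\Omega|$.

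For the limsup statement I would argue identically, but starting from \eqref{nk} instead of \eqref{mk}. From $\limsup_{t\to\infty}\|u(\cdot,t)\|_{L^{2^k}(\Omega)}^{2^k}\le N_{2^k}$ I obtain $\limsup_{t\to\infty}\|u(\cdot,t)\|_{L^{2^k}(\Omega)}\le N_{2^k}^{1/2^k}$, and then feeding this into \eqref{holderembed} with $a_1=2^k$, $a_2=a$ and taking limsup (using that the constant $|\Omega|^{1/a-1/2^k}$ is fixed) produces $\limsup_{t\to\infty}\|u(\cdot,t)\|_{L^a(\Omega)}\le |\Omega|^{\frac{1}{a}-\frac{1}{2^k}}N_{2^k}^{1/2^k}$, whence $\limsup_{t\to\infty}\|u(\cdot,t)\|_{L^a(\Omega)}^a\le N_a$ with $N_a$ depending only on $N$.

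This lemma is essentially a packaging step rather than a result with real content, so I do not anticipate a genuine obstacle; the only point requiring a modicum of care is purely bookkeeping, namely checking that the constants $M_a$ and $N_a$ retain the correct dependence (initial data versus $N$ alone) as they are inherited from $M_{2^k}$ and $N_{2^k}$. One should also note that the statement restricts to $a\ge 1$, which is precisely what makes the interpolation go in the right direction; no claim is made for $a<1$, and none is needed, since the subsequent Moser-type iteration toward the $L^\infty$-bound only requires control of high $L^a$-norms.
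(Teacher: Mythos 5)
Your argument is correct and is exactly the paper's route: the authors state Lemma \ref{lemma_s1} as an immediate consequence of Lemma \ref{lemma_s} together with the embedding $L^{a_1}(\Omega)\subset L^{a_2}(\Omega)$ for $a_1>a_2$ on the bounded domain $\Omega$, which is precisely the quantitative H\"older step you spell out. Your bookkeeping of the constants (choosing $2^k\ge a$ and tracking the dependence of $M_a$ on the initial data and of $N_a$ on $N$ alone) is accurate and fills in the only detail the paper leaves implicit.
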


Our argument below to prove the uniform bounds of $(u,v)$ is inspired by \cite{Hollis}. For $0\le \tau<T<\infty$, let $Q_{\tau, T}=\Omega\times (\tau, T)$. For $a\in [1, \infty)$, we denote by $\phi\in L^a(Q_{\tau, T})$
the space of measurable function $\phi:\,Q_{\tau, T}\mapsto\mathbb{R}$ with the norm:
$$
\|\phi\|_{a, \tau, T}=\left(\int_{Q_{\tau, T}} |\phi(x, t)|^a dxdt\right)^{1/a}.
$$

Given $\theta\in L^a(\Q)$, let $\phi$ be the solution of the following backward problem
\begin{equation}\label{back}
\left\{
\begin{array}{lll}
\dis {   \partial_t\phi}=-d_v\Delta\phi-\theta,  \ \ &x\in\Omega, t\in (\tau, T), \medskip \\
\dis  {   \partial_\nu\phi}=0, \ \ &x\in\partial\Omega, t\in (\tau, T), \medskip\\
\phi=0, \ \ &x\in\Omega, t=T.
\end{array}
\right.
\end{equation}
The following lemma comes from \cite[Lemma 3]{Hollis}.
\begin{Lemma}\label{lemma_back}
Let $a\in (1, \infty)$ and $0\le \tau<T<\infty$. Given $\theta\in L^a(\Q)$, let $\phi$ be the solution of \eqref{back}.
Then there exists constant $C(a)$ such that
$$
\|P\phi(\cdot, \tau)\|_{L^a(\Omega)},  \|P\phi\|_{a, \tau, T}, \|\Delta\phi\|_{a, \tau, T}\le C(a)\|\theta\|_{a,\tau, T},
$$
where
$$
P\phi(\cdot, t):=\phi(\cdot, t)-\frac{1}{|\Omega|} \int_\Omega \phi(x, t)dx.
$$
\end{Lemma}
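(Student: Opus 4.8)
The plan is to reduce the backward problem \eqref{back} to a standard forward parabolic equation and then invoke $L^a$ maximal regularity together with the spectral gap of the Neumann Laplacian on the mean-zero subspace. Existence and the requisite regularity of $\phi$ for $\theta\in L^a(\Q)$ follow from standard linear parabolic theory, so I would take this for granted. First I would reverse time by setting $\psi(x, s) = \phi(x, T - s)$ and $g(x, s) = \theta(x, T - s)$ for $s \in (0, T - \tau)$. Then $\psi$ solves the forward problem $\psi_s - d_v \Delta \psi = g$ on $\Omega \times (0, T-\tau)$ with homogeneous Neumann boundary condition and zero initial datum $\psi(\cdot, 0) = 0$. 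This reversal is norm-preserving: $\|g\|_{a, 0, T-\tau} = \|\theta\|_{a, \tau, T}$, $\|\Delta \psi\|_{a,0,T-\tau} = \|\Delta\phi\|_{a,\tau,T}$, and $P\psi(\cdot, T-\tau) = P\phi(\cdot, \tau)$, so it suffices to prove the three bounds for $\psi$.

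Next I would separate the spatial mean from the mean-zero part. Writing $\bar\psi(s) = |\Omega|^{-1}\int_\Omega \psi(x,s)\,dx$, integration of the equation over $\Omega$ (using the Neumann condition) shows that $\bar\psi$ solves a decoupled scalar ODE, while $P\psi = \psi - \bar\psi$ satisfies the same equation driven by $Pg$ and takes values in the mean-zero subspace, on which $-d_v\Delta$ with Neumann boundary condition is invertible with a spectral gap. Here one uses the elementary bound $\|P\|_{L^a(\Omega)\to L^a(\Omega)} \le 2$ (by H\"older, since $1/a + 1/a' = 1$) and the identity $\Delta\psi = \Delta(P\psi)$.

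The three estimates then follow in turn. For $\|\Delta\phi\|_{a,\tau,T}$ I would apply the parabolic $L^a$ maximal regularity theorem for the Neumann heat operator ($1 < a < \infty$), which gives $\|\psi_s\|_{a} + \|\Delta\psi\|_a \le C(a)\|g\|_a$ for zero initial data. For $\|P\phi\|_{a,\tau,T}$ I would use the elliptic Calder\'on--Zygmund estimate $\|w\|_{L^a(\Omega)} \le C\|\Delta w\|_{L^a(\Omega)}$, valid for mean-zero $w$ under Neumann conditions, applied to $w = P\psi(\cdot, s)$ pointwise in $s$ and then integrated, yielding $\|P\psi\|_a \le C\|\Delta\psi\|_a$. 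Finally, for the endpoint term $\|P\phi(\cdot,\tau)\|_{L^a(\Omega)} = \|P\psi(\cdot, T-\tau)\|_{L^a(\Omega)}$ I would use the Duhamel representation $P\psi(\cdot, s) = \int_0^s e^{(s-\sigma)d_v\Delta}Pg(\cdot,\sigma)\,d\sigma$ together with the exponential decay $\|e^{\rho d_v\Delta}P\|_{L^a\to L^a} \le C e^{-\omega\rho}$ (analyticity plus spectral gap) and H\"older in time, producing a bound uniform in $s$.

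The main obstacle is the $L^a$ maximal regularity estimate, the deepest ingredient: it holds for every $a \in (1,\infty)$ but requires singular-integral/operator-multiplier machinery (or the bounded $H^\infty$-calculus of the Neumann Laplacian), which is precisely what makes the result nontrivial for $a \ne 2$. The other delicate point is that one must pass to the mean-zero subspace \emph{before} inverting the Laplacian, since constants lie in its Neumann kernel; carrying the projection $P$ through every step is what lets the spectral gap deliver both the elliptic bound and the endpoint decay. Since the statement is quoted from \cite[Lemma 3]{Hollis}, one may alternatively cite it directly, but the above is a self-contained route.
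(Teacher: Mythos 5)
Your proposal is correct, but it cannot be compared line-by-line with the paper's argument for the simple reason that the paper gives no proof of this lemma at all: it is imported verbatim as \cite[Lemma 3]{Hollis}, and the authors rely on the citation. What you have written is a legitimate self-contained reconstruction along the standard lines (and essentially the route taken in \cite{Hollis} itself): time reversal to a forward Neumann heat equation with zero initial datum, $L^a$ maximal regularity for $\|\Delta\psi\|_{a,\tau,T}$, the Calder\'on--Zygmund/invertibility estimate on the mean-zero subspace for $\|P\phi\|_{a,\tau,T}$, and Duhamel plus the spectral gap for the endpoint bound on $\|P\phi(\cdot,\tau)\|_{L^a(\Omega)}$. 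The one point worth emphasizing — which you do flag, and which is the only place a careless version of this argument would go wrong — is that the constant $C(a)$ must be independent of $\tau$ and $T$; this is silently but essentially used in Lemma \ref{lemma_dual}, where the dependence on the interval length is tracked explicitly through the factor $(T-\tau)^{1/a}$. Since $0$ lies in the Neumann spectrum, maximal regularity applied naively to $\psi$ would give a constant growing with $T-\tau$; projecting onto mean-zero functions before invoking maximal regularity, the elliptic estimate, and the semigroup decay is exactly what restores uniformity, and your observation that $\Delta\psi=\Delta(P\psi)$ and $\|P\|_{L^a\to L^a}\le 2$ makes this reduction harmless. In short: the proof is sound, supplies detail the paper omits by citation, and correctly isolates the genuinely nontrivial ingredient ($L^a$ maximal regularity for $a\ne 2$).
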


\begin{Lemma}\label{lemma_dual}
For any $a>1$, there exist positive constants $A, B, C$ depending on the initial data such that for any $0\le \tau<T<\infty$, the solution $(u, v)$ satisfies
\begin{equation}\label{dual}
\| v \|_{a, \tau, T}\le   A (T-\tau)^{\frac{1}{a}} + B \|v(\cdot, \tau)\|_{L^a(\Omega)} + C.
\end{equation}
Moreover, there exists $\tau_0>0$ such that \eqref{dual} holds with $A, B, C$ depending only on $N$ for $\tau_0\le \tau<T<\infty$.
\end{Lemma}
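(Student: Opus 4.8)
The plan is to prove \eqref{dual} by an $L^a$–$L^{a'}$ duality argument, using the backward problem \eqref{back} and Lemma \ref{lemma_back} to trade a bound on $\|v\|_{a,\tau,T}$ for bounds on the reaction terms, and then to control those reaction terms by the already-established spatial $L^a$-bounds on $u$ (Lemma \ref{lemma_s1}) together with the mass bound $\int_\Omega(u+v)(\cdot,t)\,dx\le N$. Fix $a>1$ and let $a'$ be its conjugate exponent. Since $v\ge0$, I would write $\|v\|_{a,\tau,T}=\sup\int_{\Q}v\,\theta\,dx\,dt$, the supremum running over $0\le\theta\in L^{a'}(\Q)$ with $\|\theta\|_{a',\tau,T}\le1$. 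Given such a $\theta$, let $\phi$ solve \eqref{back} with source $\theta$; reversing time and applying the parabolic maximum principle shows $\phi\ge0$, which is needed below.

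The key step is to test \emph{both} equations of \eqref{density1} against the single dual function $\phi$ so that the troublesome production term $\beta u^qv^p$ is eliminated. Multiplying the $v$-equation by $\phi$, integrating over $\Q$, integrating by parts in $t$ (using $\phi(\cdot,T)=0$) and in $x$ (using the Neumann conditions), and invoking $-\phi_t-d_v\Delta\phi=\theta$ gives
\begin{equation*}
\int_{\Q}v\,\theta=\int_\Omega v(\cdot,\tau)\phi(\cdot,\tau)+\int_{\Q}g\,\phi,
\end{equation*}
where $g=\beta u^qv^p-(\gamma+\mu)u^sv^r$. The same computation applied to the $u$-equation, whose reaction is $f=\gamma u^sv^r-\beta u^qv^p$, yields a companion identity, except that the mismatch $d_u\ne d_v$ leaves a defect term $(d_u-d_v)\int_{\Q}u\,\Delta\phi$. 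Since $f+g=-\mu u^sv^r\le0$ and $\phi\ge0$, I have $\int_{\Q}g\,\phi\le-\int_{\Q}f\,\phi$; substituting the $u$-identity for $-\int_{\Q}f\,\phi$ and discarding the nonnegative term $\int_{\Q}u\,\theta$ makes the $\beta u^qv^p$ contributions annihilate and produces
\begin{equation*}
\int_{\Q}v\,\theta\le\int_\Omega(u+v)(\cdot,\tau)\,\phi(\cdot,\tau)+(d_u-d_v)\int_{\Q}u\,\Delta\phi.
\end{equation*}

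It then remains to estimate the right-hand side. Splitting $\phi(\cdot,\tau)=P\phi(\cdot,\tau)+\bar\phi(\tau)$ with $\bar\phi(\tau)=|\Omega|^{-1}\int_\Omega\phi(\cdot,\tau)$, Hölder's inequality and the bound $\|P\phi(\cdot,\tau)\|_{L^{a'}(\Omega)}\le C(a')$ from Lemma \ref{lemma_back} control the $P\phi$-part by $C(a')(\|u(\cdot,\tau)\|_{L^a(\Omega)}+\|v(\cdot,\tau)\|_{L^a(\Omega)})$, and Lemma \ref{lemma_s1} replaces $\|u(\cdot,\tau)\|_{L^a(\Omega)}$ by $M_a$, giving the $B\|v(\cdot,\tau)\|_{L^a(\Omega)}+C$ part of \eqref{dual}. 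Integrating the backward equation over $\Omega$ yields the explicit identity $\bar\phi(\tau)=|\Omega|^{-1}\int_{\Q}\theta$, which with the mass bound $\int_\Omega(u+v)(\cdot,\tau)\le N$ controls the average part by a multiple of $(T-\tau)^{1/a}$. Finally Lemma \ref{lemma_back} gives $\|\Delta\phi\|_{a',\tau,T}\le C(a')$ while Lemma \ref{lemma_s1} gives $\|u\|_{a,\tau,T}\le M_a(T-\tau)^{1/a}$, so the defect term is also $O((T-\tau)^{1/a})$. Taking the supremum over $\theta$ yields \eqref{dual} with $A,B,C$ depending on the data through $M_a$. For the last assertion, the $\limsup$-bound in Lemma \ref{lemma_s1} supplies a $\tau_0$ beyond which $\|u(\cdot,t)\|_{L^a(\Omega)}$ is bounded by a constant depending only on $N$; rerunning the argument on $[\tau,T]$ with $\tau\ge\tau_0$ then makes $A,B,C$ depend only on $N$.

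The main obstacle will be the defect term $(d_u-d_v)\int_{\Q}u\,\Delta\phi$: because the two equations carry distinct diffusion rates, testing them against one dual function does not give an exact cancellation of $\beta u^qv^p$, so this residual must be absorbed. This is exactly where the a priori spatial $L^a$-bounds on $u$ from Lemma \ref{lemma_s1}, paired with the $L^{a'}$-bound on $\Delta\phi$ from Lemma \ref{lemma_back}, become indispensable; the positivity $\phi\ge0$ (used to drop $-\int_{\Q}\mu u^sv^r\phi$) and the mass control are the remaining two ingredients that close the estimate.
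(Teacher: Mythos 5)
Your proposal is correct and follows essentially the same route as the paper: the same duality argument via the backward problem \eqref{back}, testing both equations against $\phi$ (with $\phi\ge 0$) so that the $\beta u^qv^p$ terms cancel up to the defect $(d_u-d_v)\int_{\Q}u\,\Delta\phi$, and then controlling the boundary term via the splitting $\phi(\cdot,\tau)=P\phi(\cdot,\tau)+\bar\phi(\tau)$ together with Lemmas \ref{lemma_back} and \ref{lemma_s1} and the mass bound. The only differences are cosmetic (you substitute the $u$-identity into the $v$-identity rather than summing the two equations first, and you bound the defect term by Hölder on $\|u\|_{a,\tau,T}\|\Delta\phi\|_{a',\tau,T}$ instead of integrating $\|\Delta\phi(\cdot,t)\|_{L^{a'}(\Omega)}$ in time), and both yield the same constants.
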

\begin{proof}
Let $a'>1$ such that $1/a+1/a'=1$. Let $\theta\in L^{a'}(\Q)$ with $\theta\geq0$ be given and $\phi$ be the solution of \eqref{back}.

Multiplying the two equations of \eqref{density1} by $\phi$, and adding them up and integrating over $Q_{\tau, T}$, we obtain
$$
\int_{\Q} (\partial_t u-d_u\Delta u)\phi dxdt+\int_{\Q} (\partial_t v-d_v\Delta v)\phi dxdt \le 0.
$$
Integrating by parts, we further have
\begin{eqnarray*}
\int_{\Q} (-\partial_t \phi-d_u\Delta \phi)u dxdt+\int_{\Q} (-\partial_t \phi-d_v\Delta \phi)v dxdt \\
+\int_\Omega (u(x, t)+v(x, t))\phi(x, t)dx|_{t=\tau}^{t=T}\le 0.
\end{eqnarray*}
This, together with \eqref{back}, yields
\begin{equation}
\begin{array}{lll}
\label{ee}
\int_{\Q} (u+v)\theta dxdt &\le& \int_\Omega u(x, \tau)\phi(x, \tau)dxdt+ \int_\Omega v(x, \tau)\phi(x, \tau)dx\medskip\\
&&+(d_u-d_v)\dis\int_{\Q} u\Delta \phi dxdt \\
&=:&\mathcal{I}_1+\mathcal{I}_2+ (d_u-d_v)\mathcal{I}_3. \medskip
\end{array}
\end{equation}

On the other hand, integrating the first equation of \eqref{back} on $\Q$, we obtain
\begin{equation}\label{theta}
\int_\Omega \phi(x, \tau) dx = \int_{\Q} \theta(x, t)dxdt.
\end{equation}
As a result, by \eqref{theta}, Lemmas \ref{lemma_s1} and \ref{lemma_back}, and H\"{o}lder's inequality, we have
\begin{eqnarray}
\mathcal{I}_1&=& \frac{1}{|\Omega|} \int_\Omega \phi(x, \tau)dx \int_\Omega u(x, \tau)dx +  \int_\Omega u(x, \tau) (P\phi)(x, \tau) dx \nonumber\\
&\le&  \frac{N}{|\Omega|}   \int_{\Q} \theta(x, t)dxdt + \|u(\cdot, \tau)\|_{L^a(\Omega)}\|P\phi\|_{L^{a'}(\Omega)}  \nonumber\\
&\le&  \frac{N}{|\Omega|} (T-\tau)^{\frac{1}{a}} \|\theta\|_{a',\tau, T} + M_a C(a') \|\theta\|_{a',\tau, T},  \label{i1}
\end{eqnarray}
where $M_a$ is defined in Lemma \ref{lemma_s1}.

Similarly, making use of \eqref{theta} and Lemma \ref{lemma_back}, we get
\begin{eqnarray}
\mathcal{I}_2 &=& \frac{1}{|\Omega|} \int_\Omega \phi(x, \tau)dx \int_\Omega v(x, \tau)dx +  \int_\Omega v(x, \tau) (P\phi)(x, \tau) dx \nonumber\\
&\le&  \frac{N}{|\Omega|}   \int_{\Q} \theta(x, t)dxdt + \|v(\cdot, \tau)\|_{L^a(\Omega)}\|P\phi\|_{L^{a'}(\Omega)}  \nonumber\\
&\le&  \frac{N}{|\Omega|} (T-\tau)^{\frac{1}{a}} \|\theta\|_{a',\tau, T} + C(a')  \|v(\cdot, \tau)\|_{L^a(\Omega)} \|\theta\|_{a',\tau, T}.  \label{i2}
\end{eqnarray}
By Lemmas \ref{lemma_s1}-\ref{lemma_back} and H\"{o}lder inequality, we also have
\begin{eqnarray}
\mathcal{I}_3 &\le& \int_\tau^T \|\Delta\phi(\cdot, t)\|_{L^{a'}(\Omega)} \|u(\cdot, t)\|_{L^{a}(\Omega)} dt \nonumber \\
&\le& M_a\int_\tau^T \|\Delta\phi(\cdot, t)\|_{L^{a'}(\Omega)} dt \nonumber\\
&\le&  M_a \left( \int_\tau^T \|\Delta\phi(\cdot, t)\|_{L^{a'}(\Omega)} ^{a'} dt \right)^{\frac{1}{a'}} \left(\int_\tau^T 1^a dt \right)^\frac{1}{a} \nonumber \\
&\le&  M_a C(a') (T-\tau)^{\frac{1}{a}}  \|\theta\|_{a',\tau, T}.
\label{i3}
\end{eqnarray}

Combining \eqref{ee} and \eqref{i1}-\eqref{i3}, we have
\begin{eqnarray}
\int_{\Q} v\theta dxdt&\leq&\int_{\Q} (u+v)\theta dxdt\nonumber \\
&\le&\left( A (T-\tau)^{\frac{1}{a}} + B \|v(\cdot, \tau)\|_{L^a(\Omega)} + C \right) \|\theta\|_{a',\tau, T},
\label{qq}
\end{eqnarray}
where $A= 2N/|\Omega|+  M_a C(a')$, $B=C(a')$ and $C=M_a C(a') $. Since $\theta\in L^{a'}(\Omega)$ is arbitrary, by \eqref{qq} and duality, $v\in L^a(\Q)$ and \eqref{dual} holds.

By virtue of the above estimates for $\mathcal{I}_i\,(i=1, 2, 3)$, we now use Lemma \ref{lemma_s1} to conclude that there exists $\tau_0>0$ such that all the $M_a$ in the previous inequalities can be replaced by $N_a$ which depends only on $N$ for $\tau_0\le \tau<T$. Therefore, \eqref{dual} holds for $\tau_0\le \tau<T$ with $A, B$ and $C$ depending only on $N$.
\end{proof}

\vskip5pt
With the aid of Lemma \ref{lemma_dual}, one can apply an argument similar to
the proof of \cite[Lemma 7]{Hollis} to establish the following result.

\begin{Lemma}\label{lemma_t}
 For any $a>1$, there exist positive constants $\Lambda_a,\Gamma_a$ and $\tilde M_a$ depending on the initial data and a sequence $\{t_k\}_{k=0}^\infty$ with $t_0=0$ such that the solution $(u, v)$ satisfies
\begin{enumerate}
\item[\rm(i)] $1\le t_{k+1}-t_k\le \Lambda_a;$
\item[\rm(ii)] $\|v(\cdot, t_k)\|_{L^a(\Omega)}\le \tilde M_a;$
\item[\rm(iii)]  $\int_{t_k}^{t_{k+1}} \|v(\cdot, t)\|_{L^a(\Omega)}^a dt \le \Gamma_a$.
\end{enumerate}
Moreover, there exists $\tau_0>0$ such that {\rm(i)-\rm(iii)} hold with $t_0=0$ replaced by $t_0=\tau_0$ and $\Lambda_a,\Gamma_a$ and $\tilde M_a$ depending only on $N$.
\end{Lemma}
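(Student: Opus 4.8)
The plan is to convert the space--time estimate of Lemma \ref{lemma_dual} into the pointwise-in-time conclusions (i)--(iii) by a mean-value (pigeonhole) selection of the times $t_k$, coupled with a bootstrap that keeps the selected $L^a$-norms uniformly bounded. Throughout, write $\psi(t):=\|v(\cdot,t)\|_{L^a(\Omega)}^a$, which is continuous in $t$ because $(u,v)$ is a classical solution, and recall that $\|v\|_{a,\tau,T}^a=\int_\tau^T\psi(t)\,dt$.

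The first ingredient is the selection step. Suppose $t_k$ has been chosen with $\|v(\cdot,t_k)\|_{L^a(\Omega)}\le \tilde M_a$. Applying Lemma \ref{lemma_dual} on $(\tau,T)=(t_k,t_k+L)$ for a waiting length $L>1$ to be fixed gives
\begin{equation*}
\int_{t_k}^{t_k+L}\psi(t)\,dt \le \big(AL^{1/a}+B\tilde M_a+C\big)^a.
\end{equation*}
Since $\psi$ is continuous, its infimum over the compact interval $[t_k+1,t_k+L]$ (of length $L-1$) is attained and does not exceed the average, so there is a time $t_{k+1}\in[t_k+1,t_k+L]$ with
\begin{equation*}
\psi(t_{k+1})\le \frac{1}{L-1}\int_{t_k}^{t_k+L}\psi(t)\,dt \le \frac{\big(AL^{1/a}+B\tilde M_a+C\big)^a}{L-1}.
\end{equation*}
Restricting the infimum to $[t_k+1,t_k+L]$ rather than $[t_k,t_k+L]$ forces $t_{k+1}-t_k\ge 1$, so setting $\Lambda_a:=L$ yields (i), while $\Gamma_a:=(AL^{1/a}+B\tilde M_a+C)^a$ bounds the full integral over $[t_k,t_{k+1}]\subset[t_k,t_k+L]$, giving (iii).

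The bootstrap, which is the crux, is to choose $L$ and $\tilde M_a$ so that the bound $\|v(\cdot,t_{k+1})\|_{L^a(\Omega)}\le\tilde M_a$ is reproduced with the same constant. Taking $a$-th roots, the selection produces the affine recursion $m_{k+1}\le G(m_k)$, where $m_k:=\|v(\cdot,t_k)\|_{L^a(\Omega)}$ and
\begin{equation*}
G(m)=\frac{B}{(L-1)^{1/a}}\,m+\frac{AL^{1/a}+C}{(L-1)^{1/a}}.
\end{equation*}
Choosing $L$ large enough that $\rho:=B/(L-1)^{1/a}<1$ makes $G$ a contraction with fixed point $m^*=(AL^{1/a}+C)/((L-1)^{1/a}-B)$, which stays finite and in fact tends to $A$ as $L\to\infty$. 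Then $\tilde M_a:=\max\{\|v_0\|_{L^a(\Omega)},\,m^*\}$ satisfies $G(\tilde M_a)\le\tilde M_a$, so $[0,\tilde M_a]$ is invariant under $G$; starting from $t_0=0$ (where $m_0=\|v_0\|_{L^a(\Omega)}\le\tilde M_a$) and iterating gives a sequence $\{t_k\}$ obeying (i)--(iii) with constants depending on the initial data.

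For the dissipative refinement I would restart the construction past the threshold $\tau_0$ supplied by Lemma \ref{lemma_dual}, where $A,B,C$ depend only on $N$. Since consecutive times are at least $1$ apart, the sequence reaches $[\tau_0,\infty)$; from there the same recursion (now with $N$-dependent coefficients) is a contraction, so its iterates satisfy $m_k\to m^*$ with $m^*$ depending only on $N$. Hence after finitely many further steps $m_k\le m^*+1$, and relabelling this time as the new $t_0=\tau_0$ yields a sequence for which (ii) holds with $\tilde M_a=m^*+1$ and (i), (iii) hold with $\Lambda_a,\Gamma_a$ depending only on $N$. The main obstacle is exactly this closure of the bootstrap: the nontrivial point is that the factor $B=C(a')$ multiplying the previous norm need not be small, so one must spend a large waiting time $L=\Lambda_a$ to render the selection map a contraction, and it is the attracting fixed point of that map that produces the $N$-only-dependent bound in the dissipative statement.
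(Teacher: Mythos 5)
Your argument is correct and is essentially the proof the paper intends: the paper defers to \cite[Lemma 7]{Hollis}, whose content is exactly your mean-value selection of $t_{k+1}\in[t_k+1,t_k+L]$ combined with the affine recursion $m_{k+1}\le \rho m_k+\mathrm{const}$ made contractive by taking the waiting length $L$ large, and the $N$-only bounds past $\tau_0$ follow from the attracting fixed point as you describe. No gaps.
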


Our main result of this subsection on the uniform bounds follows from a semigroup computation.

\begin{Theorem}\label{Theorem_bound0}
{   Suppose that $\beta>0$, $\gamma, \mu\ge 0$, and {\rm (H1)-(i)} holds.}  Let $(u, v)$ be a  nonnegative classical solution of \eqref{density1}. Then there exists $M_\infty>0$ depending on the initial data such that \eqref{Minfdd} holds.
Moreover, there exists $N_\infty$ independent of initial data such that \eqref{Ninfdd} holds for any  global classical solution $(u, v)$.
\end{Theorem}

\begin{proof}
Let $T(t)$ be the semigroup in $X:=L^a(\Omega)\,(a>1)$ generated by $$\mathcal{A}:=d_I\Delta-1$$ with the domain
$$
D(\mathcal{A})=\left\{u\in W^{2, a}(\Omega): \ {   \partial_\nu u}=0 \text{ on } \partial\Omega \right\}.
$$
Let $X_\alpha\, (0<\alpha<1)$ be the fractional power space with graph norm. Choose $a$ large and $\alpha$ such that $\alpha>n/(2a)$ and $\alpha<1-1/a$. Then $X_\alpha\subset C(\bar\Omega)$. It is well known that there exists $M_\alpha>0$ such that
$$
\| \mathcal{A}^\alpha T(t)\|\le \frac{M_\alpha}{t^\alpha}, \ \ \forall t>0.
$$

Let $\{t_k\}$ be the sequence given in Lemma \ref{lemma_t} (with $a$ replaced by $ap$). By the second equation of \eqref{density1}, for any $t\in (t_k, t_{k+1})$, we have
\begin{eqnarray}\label{T}
\nonumber v(\cdot, t)&=&T(t-t_{k-1})v(\cdot, t_{k-1}) \\
&&+ \int_{t_{k-1}}^t T(t-\tau)\big[ \beta u^q(\cdot, \tau)v^p(\cdot, \tau)+(1-\gamma-\mu)u^s(\cdot, \tau)v^r(\cdot, \tau) \big]d\tau.
\end{eqnarray}
By \eqref{T} and Lemma \ref{lemma_t}, for any $t\in (t_k, t_{k+1})$, we have
\begin{eqnarray}
\|\mathcal{A}^\alpha v(\cdot, t)\|_{L^a(\Omega)} &\le&  \|\mathcal{A}^\alpha T(t-t_{k-1}) v(\cdot, t_{k-1})\|_{L^a(\Omega)}
\nonumber \\
&&+ \int_{t_{k-1}}^t \|\mathcal{A}^\alpha T(t-\tau) [\beta u^q(\cdot, \tau)v^p(\cdot, \tau)+{(1-\gamma-\mu)}u^s(\cdot, \tau)v^r(\cdot, \tau)]\|_{L^a(\Omega)} d\tau \nonumber\\
&\le& \frac{M_\alpha  \|v(\cdot, t)\|_{L^a(\Omega)}}{(t-t_{k-1})^\alpha} \nonumber \\
&&+\int_{t_{k-1}}^t M_\alpha\frac{{  \beta}\|u^q(\cdot, \tau)v^p(\cdot, \tau)\|_{L^a(\Omega)}+{|1-\gamma-\mu|} \|u^s(\cdot, \tau)v^r(\cdot, \tau)\|_{L^a(\Omega)}}{(t-\tau)^\alpha}d\tau \nonumber\\
&\le& M_\alpha\tilde M_a +  M_\alpha{  \beta}\int_{t_{k-1}}^{t} \frac{\|u^q(\cdot, \tau)\|_{L^a(\Omega)}  \|v^p(\cdot, \tau)\|_{L^a(\Omega)}}{(t-\tau)^\alpha}d\tau
\nonumber\\
&& + M_\alpha {|1-\gamma-\mu|}\int_{t_{k-1}}^{t} \frac{\|u^s(\cdot, \tau)\|_{L^a(\Omega)}\|v^r(\cdot, \tau)\|_{L^a(\Omega)}}{(t-\tau)^\alpha}d\tau \nonumber\\
&=:&M_\alpha\tilde M_a +  M_\alpha{ \beta}\mathcal{I}_1+M_\alpha {|1-\gamma-\mu|} \mathcal{I}_2. \nonumber
\end{eqnarray}
By Lemma \ref{lemma_s1}, it holds
$$
\|u^q(\cdot, \tau)\|_{L^a(\Omega)}\le  \|u(\cdot, \tau)\|_{L^{aq}(\Omega)}^q\le M_{aq}^q.
$$
Noticing that $\|v^p(\cdot, \tau)\|_{L^a(\Omega)} \le \|v(\cdot, \tau)\|_{L^{ap}(\Omega)}^p$, and using H\"{o}lder's inequality with $1/a'+1/a=1$, we deduce
\begin{eqnarray}
\mathcal{I}_1 &\le&  M_{aq}^q \left( \int_{t_{k-1}}^t \frac{1}{(t-\tau)^{\alpha a'}}d\tau \right)^{\frac{1}{a'}} \left(\int_{t_{k-1}}^t  \|v(\cdot, \tau)\|_{L^{ap}(\Omega)}^{ap}d\tau\right)^{\frac{1}{a}}  \nonumber\\
&\le& M_{aq}^q \left( \frac{(t-t_{k-1})^{1-\alpha a'}}{1-\alpha a'} \right)^\frac{1}{a'}   \left(\int_{t_{k-1}}^{t_k}  \|v(\cdot, \tau)\|_{L^{ap}(\Omega)}^{ap} d\tau+ \int_{t_{k}}^t  \|v(\cdot, \tau)\|_{L^{ap}(\Omega)}^{ap}d\tau\right)^{\frac{1}{a}}  \nonumber\\
&\le& M_{aq}^q \frac{(2\Lambda_{ap})^\frac{1-\alpha a'}{a'}}{(1-\alpha a')^\frac{1}{a'}}  (2\Gamma_{ap})^\frac{1}{a},\nonumber
\end{eqnarray}
where we have used Lemma \ref{lemma_t} and the fact that $\alpha a'=\alpha a/ (a-1)<1$ as $\alpha<1-1/a$.

Similarly, we can obtain a similar estimate for $\mathcal{I}_2$.
Therefore, there exists a positive constant $C$ such that
\begin{equation}\label{semig}
 \|A^\alpha v(\cdot, t)\|_{L^a(\Omega)}\le C,\ \ \ \mbox{for all $t\ge t_1$}.
\end{equation}
By the embedding $X_\alpha\subset C(\bar\Omega)$, there exists $M_\infty>0$ such that $\|v(\cdot, t)\|_{L^\infty(\Omega)} \le M_\infty$ for $t\ge 0$. Similarly, we can use Lemmas \ref{lemma_s} and \ref{lemma_t} to prove that $\|u(\cdot, t)\|_{L^\infty(\Omega)}\le M_\infty$ for all $t\ge 0$. This proves \eqref{Minfdd}.

In light of Lemmas \ref{lemma_s1}-\ref{lemma_t}, a similar semigroup argument allows one to assert \eqref{Ninfdd};
the details are omitted here.
\end{proof}

\begin{Theorem}\label{Theorem_boundp} {   Suppose that  $\beta>0$, $\gamma, \mu\ge 0$, and \rm (H1)-(ii)} holds. Let $(u, v)$ be a  nonnegative  classical solution of \eqref{density1}. Then there exists $M_\infty>0$ depending on the initial data such that \eqref{Minfdd} holds.
If in addition $p=r=1$, there exists $N_\infty$ independent of initial data such that \eqref{Ninfdd} holds for any  global classical solution $(u, v)$.
\end{Theorem}
\begin{proof} Since $p=r$ and $s<q$, from the first equation of \eqref{density1} we have
$$
u_t-d_u\Delta u=v^ru^s(-\beta u^{q-s}+\gamma).
$$
By the maximum principle, there exists $M>0$ depending on the initial data such that $\|u(\cdot, t)\|_{L^\infty(\Omega)}\le M$ for all $t\ge 0$. The proof of the $L^\infty$-bounds of $v$ is similar to Lemmas \ref{lemma_dual}-\ref{lemma_t} and Theorem \ref{Theorem_bound0}.

If $p=r=1$ and $s<q$, then given $a\geq1$, for any $\epsilon>0$, we have
\begin{equation}\label{pr1}
u^{s+a-1}v \le \epsilon u^{q+a-1}v + A_1 v,\ \ \forall u,\,v\geq0,
\end{equation}
for some $A_1=A_1(\epsilon)$. Using this inequality instead of \eqref{pq} and the fact $\int_\Omega v(x, t)dx\le N$, we can prove the uniform boundedness result for $u$ stated as in Lemma \ref{lemma_s}. To see this, for any $k\geq0$, we can choose $\epsilon$ so small that \eqref{pr1} implies
$$
- \int_\Omega\beta u^{2^k-1+q}v^pdx+\int_\Omega \gamma u^{2^k-1+s}v^r dx \le  A_1 N |\Omega|,\ \ \forall t\geq0.
$$
Multiplying both sides of the first equation of \eqref{density1} by $u^{2^k-1}$ and integrating over $\Omega$, we obtain
\begin{eqnarray*}
\frac{1}{2^k}\frac{d}{dt}\int_\Omega u^{2^k} dx &\le& -\frac{2^k-1}{2^{2k-2}} d_u\int_\Omega |\nabla  u^{2^{k-1}}|^2dx + \int_\Omega (-\beta u^{q}v^p+ \gamma u^{s}v^r)u^{2^k-1} dx \\
&\le & -2C_1 \int_\Omega u^{2^k} dx + C_3\left(\int_\Omega  u^{2^{k-1}}dx\right)^2 + A_1N|\Omega|,\ \ \forall t\geq0.
\end{eqnarray*}
Here \eqref{xii} is used as in the proof of Lemma \ref{lemma_s}. Therefore, a similar induction argument as in Lemma \ref{lemma_s} gives the $L^\infty$-bounds of $u$ as in Lemma \ref{lemma_s}. The rest of the proof is similar to Theorem \ref{Theorem_bound0}, as one can follow the same arguments to establish similar results in Lemmas \ref{lemma_s1}-\ref{lemma_t}.
\end{proof}

\subsection{Proof of Theorem \ref{Theorem_boundaa} under  condition (H2)-(i)}

In this subsection, we establish the $L^\infty$-bounds of the solutions of \eqref{density1}
if (H2)-(i) is fulfilled. We start with the following lemma.

\begin{Lemma} \label{lemma_bb} Assume that {\rm (H2)-(i)} holds and $a>1$. Let
$b=\frac{1-p}{q}(a-1)+1.$
Then for any $\epsilon>0$, there exists $A=A(\epsilon)$ such that the following two inequalities hold:
\begin{equation}\label{aaa}
u^{s+a-1}v^r\le \epsilon u^{q+a-1}v^{p}+ {\color{red} A} v^{b}+1,\ \ \forall u,\, v\ge 0,
\end{equation}
\begin{equation}\label{bbb}
u^qv^{p+b-1}\le \epsilon u^{q+a-1}v^{p}+ {\color{red} A} v^{b},\ \ \forall u,\, v\ge 0.
\end{equation}
\end{Lemma}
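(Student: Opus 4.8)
The plan is to derive both estimates from the weighted arithmetic-geometric mean (Young) inequality, choosing the interpolation exponents so that the powers of $u$ and $v$ match on the two sides, and then reading off the role of (H2)-(i) as the single scalar inequality that makes the chosen weights admissible.

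For \eqref{bbb} I would look for $\lambda\in(0,1)$ such that $u^q v^{p+b-1}=\big(u^{q+a-1}v^p\big)^{\lambda}\big(v^b\big)^{1-\lambda}$. Matching the exponent of $u$ forces $\lambda=\frac{q}{q+a-1}$, which lies in $(0,1)$ because $a>1$ and $q>0$. The definition $b=\frac{1-p}{q}(a-1)+1$ is precisely what makes the exponent of $v$ match as well, i.e. $\lambda p+(1-\lambda)b=p+b-1$; a one-line substitution (using $b-p=(1-p)\frac{q+a-1}{q}$) confirms this. Since $\lambda,1-\lambda$ already form a convex combination, the weighted inequality $x^\lambda y^{1-\lambda}\le \delta\lambda\, x+(1-\lambda)\delta^{-\lambda/(1-\lambda)}y$ applied with $x=u^{q+a-1}v^p$, $y=v^b$ and $\delta$ small yields \eqref{bbb}; notably this needs neither (H2)-(i) nor an additive constant.

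For \eqref{aaa} the natural move is a three-term weighted AM-GM over the quantities $u^{q+a-1}v^p$, $v^b$ and the constant $1$: I would seek weights $\theta_1,\theta_2,\theta_3\ge 0$ with $\theta_1+\theta_2+\theta_3=1$ and $u^{s+a-1}v^r=\big(u^{q+a-1}v^p\big)^{\theta_1}\big(v^b\big)^{\theta_2}1^{\theta_3}$. Matching the exponent of $u$ gives $\theta_1=\frac{s+a-1}{q+a-1}\in(0,1)$ (using $0\le s<q$ and $a>1$), and matching the exponent of $v$ determines $\theta_2$ through $\theta_1 p+\theta_2 b=r$. The crucial point is $\theta_3=1-\theta_1-\theta_2\ge 0$: after inserting $\theta_1$ and the value of $b$ and clearing denominators, the inequality $\theta_1+\theta_2\le 1$ collapses to $rq-sp\le q-s$, which is exactly the hypothesis $ps-qr\ge s-q$ of (H2)-(i). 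With all three weights nonnegative, weighted AM-GM gives $u^{s+a-1}v^r\le \epsilon u^{q+a-1}v^p+A v^b+A'$, and since the constant weight is fixed I can enlarge $A$ and absorb $A'$ into the form $A v^b+1$ by a final elementary Young step.

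The part I expect to be most delicate is the exponent bookkeeping in \eqref{aaa}, in particular confirming $\theta_2\ge0$ (equivalently $r\ge \theta_1 p$) alongside $\theta_3\ge0$; the constant $1$ and the factor $v^b$ are the two reservoirs that absorb the deficit in the $v$-exponent, and the whole content of (H2)-(i) is that this can be done with legitimate weights. Once the equivalence between (H2)-(i) and $\theta_1+\theta_2\le1$ is nailed down, everything else is routine, so I would concentrate the effort on that scalar identity and on the nonnegativity of each weight, taking care of the degenerate regimes: the limit $v\to 0$ (handled by the additive constant) and the equality case of (H2)-(i) (where $\theta_3=0$, so a two-term inequality already suffices).
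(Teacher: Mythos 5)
Your route is genuinely different in form from the paper's: the paper splits the quadrant $\{u,v\ge 0\}$ into regions according to whether $u$ lies above or below a suitable power of $v$ and checks that on each region a single term of the right-hand side dominates, whereas you encode everything in a single weighted AM--GM with explicit weights. Your treatment of \eqref{bbb} is complete and correct, and your identification of the hypothesis $ps-qr\ge s-q$ with the scalar inequality $\theta_1+\theta_2\le 1$ is also correct: clearing denominators, $\theta_1+\theta_2\le1$ reduces to $(q+a-1)\,(q-s+ps-qr)\ge 0$, i.e.\ exactly to the third condition of (H2)-(i).

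The gap is precisely the point you flag and then leave open: $\theta_2\ge 0$ does \emph{not} follow from (H2)-(i). Explicitly, $\theta_2\ge0$ is equivalent to $r(q+a-1)\ge p(s+a-1)$, which fails for every $a>1$ when, say, $s=r=0$, $q=1$, $0<p<1$ --- a choice satisfying all of (H2)-(i). Worse, in that regime inequality \eqref{aaa} is itself false: taking $v=u^{-\kappa}$ with $\kappa$ large, the left side is $u^{a-1}\to\infty$ while the right side tends to $1$ as $u\to\infty$. So no amount of bookkeeping closes this from the stated hypotheses alone; one needs an extra condition such as $r\ge p$ (then $\theta_1 p<p\le r$ gives $\theta_2\ge0$ immediately), which does hold in the paper's application, where $s=0$, $r=1$ and $p<1$. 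With $r\ge p$ added your argument goes through, modulo one small repair at the end: a constant $A'>1$ cannot be absorbed into $A_1v^b+1$ at $v=0$, so you should either choose the AM--GM normalizations so that the weight landing on the constant slot is already $\le1$ (possible when $\theta_2>0$ by pushing the normalization onto the $v^b$ slot), or simply keep a constant $C(\epsilon)$ in place of $1$, which is all that the application in \eqref{eS3} requires. For what it is worth, the paper's own proof shares the same blind spot: it verifies the region-covering condition only for $v\ge1$ and dismisses the case $u\ge1$, $v<1$ as ``similar,'' and that is exactly where the counterexample above lives.
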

\begin{proof} We first prove \eqref{aaa}. Obviously, \eqref{aaa} is true when $u,\,v\leq1$. It suffices to consider the case of
either $u\geq1$ or $v\geq1$. In the sequel, we only handle the case $v\geq1$; the other case can be treated similarly.

It is easily noticed that
 $$
 u^{s+a-1}v^r\le \epsilon u^{q+a-1}v^{p}\ \ \ \ \mbox{if}\ \ \epsilon^{-\frac{1}{q-s}} v^\frac{r-p}{q-s}\le u,
 $$
and
 $$
 u^{s+a-1}v^r\le A v^b\ \ \ \ \mbox{if}\ \ 0\leq u\le A^\frac{1}{s+a-1} v^\frac{b-r}{s+a-1}.
 $$
Due to $v\ge 1$, \eqref{aaa} follows readily if we choose $A=\epsilon^{-\frac{s+a-1}{q-s}}$ and assume that
$$
\frac{r-p}{q-s}\le \frac{b-r}{s+a-1}.
$$
The latter inequality is satisfied if
$$
\frac{r-p}{q-s}\le \frac{1-p}{q} \ \ \ \text{and}  \ \ \  \frac{b-1}{a-1} \le \frac{b-r}{s+a-1}.
$$
These two inequalities can be checked directly using $ps-qr\ge q-s$ and $\frac{1-p}{q}=\frac{b-1}{a-1}$.
Thus, the above analysis verifies \eqref{aaa}.

Clearly, we have
 $$
 u^qv^{p+b-1}\le \epsilon u^{q+a-1}v^p,\ \ \ \forall u\geq0,\ \ \mbox{if}\ 0\leq v\le \epsilon^{1/{(b-1)}}u^{(a-1)/(b-1)},
 $$
and
 $$
 u^qv^{p+b-1}\le Av^{b},\ \ \ \forall u\geq0,\ \ \mbox{if}\  v\ge A^{-1/(1-p)}u^{q/(1-p)}.
 $$
By taking $A=\epsilon^{(p-1)/(b-1)}$ and the definition of $b$, we obtain \eqref{bbb} .
\end{proof}

{   For Lemmas \ref{lemma_sii}-\ref{Lemma_rp1}, we suppose that $\beta>0$, $\gamma, \mu\ge 0$, and {\rm (H2)-(i)} holds. Let $(u, v)$ be the nonnegative
 classical solution of \eqref{density1}. }
\begin{Lemma}\label{lemma_sii}
 Let $a>1$, and
$b=\frac{1-p}{q}(a-1)+1$. Then the following statements hold:

\begin{enumerate}
\item[\rm(i)] If there exists $M_1>0$ depending on the initial data such that
$$
\int_\Omega u^\frac{a}{2} dx, \ \  \int_\Omega v^\frac{b}{2} dx \le M_1,\ \ \forall t\ge 0,
$$
then there exists $M_2>0$  depending on the initial data such that
 $$
 \int_\Omega u^a dx, \ \  \int_\Omega v^b dx \le M_2,\ \ \forall t\ge 0.
 $$

\item[\rm(ii)]  If there exists $N_1>0$ independent of initial data such that
$$
\limsup_{t\rightarrow\infty} \int_\Omega u^\frac{a}{2} dx, \ \ \limsup_{t\rightarrow\infty} \int_\Omega v^\frac{b}{2} dx  \le N_1,
$$
then there exists $N_2>0$ independent of initial data such that
$$
\limsup_{t\rightarrow\infty} \int_\Omega u^a dx, \ \ \limsup_{t\rightarrow\infty} \int_\Omega v^b dx\le N_2.
$$
\end{enumerate}
\end{Lemma}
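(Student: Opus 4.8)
The plan is to carry out one step of an $L^p$-energy (Moser-type) iteration, but in the \emph{coupled} form forced on us by Lemma \ref{lemma_bb}: rather than handling $u$ and $v$ separately, I would track $\int_\Omega u^a\,dx$ and $\int_\Omega v^b\,dx$ simultaneously, precisely because the exponent relation $b=\frac{1-p}{q}(a-1)+1$ is exactly what makes the two Young-type inequalities \eqref{aaa} and \eqref{bbb} available. First I would test the first equation of \eqref{density1} with $u^{a-1}$ and the second with $v^{b-1}$ and integrate over $\Omega$. Integrating the diffusion terms by parts produces the dissipation terms $-\frac{4d_u(a-1)}{a^2}\int_\Omega|\nabla u^{a/2}|^2\,dx$ and $-\frac{4d_v(b-1)}{b^2}\int_\Omega|\nabla v^{b/2}|^2\,dx$, while the reaction integrals become $\int_\Omega(-\beta u^{q+a-1}v^p+\gamma u^{s+a-1}v^r)\,dx$ and $\int_\Omega(\beta u^q v^{p+b-1}-(\gamma+\mu)u^s v^{r+b-1})\,dx$.

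The key step is to \emph{add} the two resulting identities and estimate the combined reaction integral. After discarding the manifestly nonpositive term $-(\gamma+\mu)u^s v^{r+b-1}$, the surviving positive contributions are $\gamma u^{s+a-1}v^r$ and $\beta u^q v^{p+b-1}$, and these are exactly the left-hand sides of \eqref{aaa} and \eqref{bbb}. Applying those inequalities with small parameters $\epsilon_1,\epsilon_2$, and using $\gamma,\beta\le\sigma^0$, bounds both by a multiple of the single ``bad'' density $u^{q+a-1}v^p$ plus harmless $v^b$ and constant terms. Crucially, the $u$-equation supplies the negative term $-\beta u^{q+a-1}v^p$, and since $\beta>\sigma_0$ by (A3), choosing $\epsilon_1+\epsilon_2=\sigma_0/\sigma^0$ lets this negative term absorb all the $u^{q+a-1}v^p$ so produced. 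The net outcome is that the combined reaction integral is dominated by $\tilde C\int_\Omega v^b\,dx+C'$, with $\tilde C,C'$ depending only on $\epsilon_1,\epsilon_2$ and the coefficient bound $\sigma^0$.

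It then remains to close a Gronwall estimate. For this I would invoke the interpolation inequality \eqref{xii} with $w=u^{a/2}$ and $w=v^{b/2}$. Under the standing hypothesis that $\int_\Omega u^{a/2}\,dx$ and $\int_\Omega v^{b/2}\,dx$ are bounded by $M_1$, the $L^1$-terms in \eqref{xii} are controlled by $M_1^2$, so each dissipation term dominates $\frac{1}{\epsilon_*}\int_\Omega u^a\,dx$ (respectively $\int_\Omega v^b\,dx$) minus a constant. Choosing $\epsilon_*$ small enough that the $v^b$-dissipation beats the leftover $\tilde C\int_\Omega v^b\,dx$ with room to spare, I obtain $\frac{d}{dt}\bigl(\frac{1}{a}\int_\Omega u^a+\frac{1}{b}\int_\Omega v^b\bigr)\le-\lambda\bigl(\frac{1}{a}\int_\Omega u^a+\frac{1}{b}\int_\Omega v^b\bigr)+C_*$ for some $\lambda>0$ and $C_*=C_*(M_1)$. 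Gronwall then yields the uniform bound in (i), with $M_2$ depending on the data through $M_1$ and the (finite, since $u_0,v_0\in C(\bar\Omega)$) value at $t=0$. For (ii) I would run the identical argument starting from a time $T_\delta$ after which $\int_\Omega u^{a/2},\int_\Omega v^{b/2}\le N_1+\delta$; then $C_*$ depends only on $N_1$, and the $\limsup$ of the Gronwall bound is $C_*/\lambda$, independent of the initial data.

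The main obstacle is the coupling: estimating the bad reaction term of the $v$-equation via \eqref{bbb} unavoidably generates the mixed power $u^{q+a-1}v^p$, which has \emph{no} negative counterpart within the $v$-equation alone. Adding the two energy identities and exploiting both the sign of the transmission term and the lower bound $\beta>\sigma_0$ from (A3) is what makes the absorption possible. Keeping the bookkeeping of the three small parameters $\epsilon_1,\epsilon_2,\epsilon_*$ mutually consistent — so that $u^{q+a-1}v^p$ is absorbed first and the residual $\int_\Omega v^b$ second — is the only delicate point; the remaining manipulations are routine.
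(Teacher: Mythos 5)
Your proposal is correct and follows essentially the same route as the paper: test with $u^{a-1}$ and $v^{b-1}$, use \eqref{aaa} and \eqref{bbb} to push the bad reaction terms onto $u^{q+a-1}v^p$ (absorbed by the transmission term via $\beta\ge\sigma_0$) and $\int_\Omega v^b$ (absorbed by the $v$-dissipation through \eqref{xii}), then add the two energy identities and close with a Gronwall argument. The only cosmetic difference is that you add the identities before estimating while the paper estimates each separately and adds at the end; the parameter bookkeeping is identical.
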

\begin{proof}
{   Let $\sigma_0=\min\{\beta, \gamma\}$ and $\sigma^0=\max\{\beta, \gamma+\mu\}$.} Multiplying both sides of the first equation of \eqref{density1} by $u^{a-1}$ and integrating over $\Omega$, we obtain
\begin{equation}\label{eS1}
\frac{1}{a} \frac{d}{dt}\int_\Omega u^a dx \le -C_1\int_\Omega |\nabla u^\frac{a}{2}|^2dx-\sigma_0\int_\Omega u^{q+a-1}v^p dx+\sigma^0\int_\Omega u^{s+a-1} v^r dx,\ \ \forall t\ge 0,
\end{equation}
where $C_1={4d_u(a-1)}/{a^2}$.

An application of \eqref{xii} with $\epsilon_*=C_1$ and $w=u^{a/2}$ gives
\begin{equation}\label{eS2}
-C_1\int_\Omega |\nabla u^\frac{a}{2}|^2dx \le C_2 \left(\int_\Omega u^\frac{a}{2}dx\right)^2 - \int_\Omega u^adx,\ \ \forall t\ge 0,
\end{equation}
for some constant $C_2>0$.

By \eqref{aaa} of Lemma \ref{lemma_bb} with $\epsilon=\sigma_0/2\sigma^0$, there exists $A_1>0$ such that
\begin{equation}\label{eS3}
\int_\Omega u^{s+a-1}v^r dx\le \frac{\sigma_0}{2\sigma^0}\int_\Omega u^{q+a-1}v^{p}dx +A_1\int_\Omega v^b dx+|\Omega|,\ \ \forall t\ge 0.
\end{equation}

Combining \eqref{eS1}-\eqref{eS3}, we have
\begin{eqnarray}\label{TTT}
\nonumber\frac{1}{a} \frac{d}{dt}\int_\Omega u^a dx &\le&  \sigma^0 A_1\int_\Omega v^b dx +C_2 \left(\int_\Omega u^\frac{a}{2}dx\right)^2 \\
&&- \int_\Omega u^adx -\frac{\sigma_0}{2}\int_\Omega u^{q+a-1}v^p dx+\sigma^0|\Omega|,\ \ \forall t\ge 0.
\end{eqnarray}

Multiplying both sides of the second equation of \eqref{density1} by $v^{b-1}$ and integrating over $\Omega$, we obtain
\begin{equation}\label{eS4}
\frac{1}{b} \frac{d}{dt}\int_\Omega v^b dx \le -C_3\int_\Omega |\nabla v^\frac{b}{2}|^2dx+\sigma^0\int_\Omega u^{q}v^{p+b-1} dx-\sigma_0\int_\Omega u^sv^{r+b-1} dx,\ \ \forall t\ge 0,
\end{equation}
where $C_3={4d_v(b-1)}/{b^2}$.

Using \eqref{bbb} of Lemma \ref{lemma_bb} with $\epsilon=\sigma_0/2\sigma^0$, we can find $A_2>0$ such that
\begin{equation}\label{eS5}
\int_\Omega u^{q}v^{p+b-1} dx\le \frac{\sigma_0}{2\sigma^0}\int_\Omega u^{a+q-1}v^p dx +A_2\int_\Omega v^b dx,\ \ \forall t\ge 0.
\end{equation}
By \eqref{xii} with $\epsilon_*=\frac{C_3}{\sigma^0(A_1+A_2+1)}$ and $w=v^{b/2}$, there exists $C_4>0$ such that
\begin{equation}\label{eS6}
-C_3\int_\Omega |\nabla v^\frac{b}{2}|^2dx \le C_4 \left(\int_\Omega v^\frac{b}{2}dx\right)^2 - \sigma^0(A_1+A_2+1) \int_\Omega v^bdx,\ \ \forall t\ge 0.
\end{equation}

Because of \eqref{eS4}-\eqref{eS6}, we have
\begin{equation}\label{eS7}
\frac{1}{b} \frac{d}{dt}\int_\Omega v^b dx\le C_4 \left(\int_\Omega v^\frac{b}{2}dx\right)^2 -(\sigma^0 A_1+1)\int_\Omega v^bdx + \frac{\sigma_0}{2}\int_\Omega u^{a+q-1}v^p dx,\ \ \forall t\ge 0.
\end{equation}
Hence, from \eqref{TTT} and \eqref{eS7} it follows that
\begin{eqnarray}\label{TTT1}
\nonumber\frac{1}{a} \frac{d}{dt}\int_\Omega u^a dx+\frac{1}{b} \frac{d}{dt}\int_\Omega v^b dx &\le& \sigma^0|\Omega|+ C_2 \left(\int_\Omega u^\frac{a}{2}dx\right)^2  \\
&& + C_4 \left(\int_\Omega v^\frac{b}{2}dx\right)^2-\int_\Omega u^adx-\int_\Omega v^bdx,\ \forall t\ge 0.
\end{eqnarray}
This readily yields our claimed statements.
\end{proof}

\begin{Lemma}\label{Lemma_rp1}
For any $k\geq1$, there exists $M_k>0$ depending on the initial data such that the solution $(u, v)$ satisfies
$$
\|u(\cdot, t)\|_{L^k(\Omega)},  \|I(\cdot, t)\|_{L^k(\Omega)} \le M_k, \ \ \forall t\ge 0.
$$
Moreover, there exists $N_k$ independent of initial data such that, for any  global classical solution $(u, v)$, the following hold:
\begin{equation}\label{Nr}
\limsup_{t\rightarrow\infty}\|u(\cdot, t)\|_{L^k(\Omega)},  \ \limsup_{t\rightarrow\infty}\|v(\cdot, t)\|_{L^k(\Omega)} \le N_k.
\end{equation}
\end{Lemma}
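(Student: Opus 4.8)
The plan is to bootstrap the $L^{k}$-bounds from low powers to all powers by a doubling induction, using Lemma \ref{lemma_sii} as the inductive engine. Lemma \ref{lemma_sii} is precisely designed so that control of the $L^{a/2}$-norm of $u$ together with the $L^{b/2}$-norm of $v$ (where $b$ is the power conjugate to $a$ via $b=\frac{1-p}{q}(a-1)+1$) upgrades to control of the $L^{a}$-norm of $u$ and the $L^{b}$-norm of $v$. The first thing I would do is establish the base case: the $L^{1}$-bound on $u$ and $v$ comes directly from the mass-control estimate \eqref{control-mass} (with $N$ the initial total mass), which gives $\int_\Omega u,\ \int_\Omega v\le N$ for all $t\ge 0$. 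Note that the map $a\mapsto b$ is affine and increasing, with the pair $(a,b)=(1,1)$ fixed, so starting from $a=1$ the doubling scheme $a\to 2a$ produces a sequence of exponents tending to infinity for both components simultaneously.

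Concretely, I would set up the induction on $m$ over the exponent pairs $(a_m,b_m)$ where $a_m$ doubles at each step and $b_m=\frac{1-p}{q}(a_m-1)+1$. The inductive hypothesis at stage $m$ is that $\int_\Omega u^{a_m/2}$ and $\int_\Omega v^{b_m/2}$ are bounded (by a constant depending on the initial data), which is exactly the conclusion at stage $m-1$ after renaming exponents; one must verify that halving $a_m$ gives $a_{m-1}$ and that the conjugate of $a_{m-1}$ is $b_{m-1}=b_m$ halved in the appropriate affine sense, so the hypotheses of Lemma \ref{lemma_sii}-(i) line up. Applying Lemma \ref{lemma_sii}-(i) then yields the bound on $\int_\Omega u^{a_m}$ and $\int_\Omega v^{b_m}$. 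Since $a_m\to\infty$ and $b_m\to\infty$, for any prescribed $k\ge 1$ we can find a stage whose exponents exceed $k$, and the embedding $L^{a_1}(\Omega)\subset L^{a_2}(\Omega)$ for $a_1>a_2$ on the bounded domain $\Omega$ gives the $L^{k}$-bound $\|u(\cdot,t)\|_{L^k(\Omega)},\ \|v(\cdot,t)\|_{L^k(\Omega)}\le M_k$. The ultimate-bound statement \eqref{Nr} is obtained by running the identical induction with Lemma \ref{lemma_sii}-(ii) in place of part (i), tracking $\limsup_{t\to\infty}$ throughout; the base case there is $\limsup_{t\to\infty}\int_\Omega u,\ \int_\Omega v\le N$, again from \eqref{control-mass}, and the constants $N_k$ depend only on $N$.

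The main obstacle I anticipate is purely bookkeeping rather than conceptual: one must check that the affine relation $b=\frac{1-p}{q}(a-1)+1$ is compatible with the doubling of $a$ in the sense required by Lemma \ref{lemma_sii}, i.e. that when the conclusion of one stage is fed in as the hypothesis of the next, the exponent on $v$ really is the halved version that the lemma expects. Because $b$ is an increasing affine function of $a$ fixing the point $(1,1)$, doubling $a$ does send $b$ to a strictly larger value, so both exponent sequences diverge and simultaneously cover all of $[1,\infty)$; this is the key quantitative fact that makes the induction reach every $k$. A minor point to address is that Lemma \ref{lemma_sii} is stated for a fixed pair $(a,b)$ with $a>1$, so the very first doubling step from $a=1$ must be handled by taking $a_1=2$ (hence $b_1=\frac{1-p}{q}+1$) and verifying its hypotheses directly against the $L^1$-bounds, after which all later steps are uniform. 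Once these index calculations are in place, the conclusion follows, and I would simply remark that the remaining verifications are routine and omit the repetitive constant-chasing.
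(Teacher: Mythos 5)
Your overall strategy---using Lemma \ref{lemma_sii} as the inductive engine, taking the base case from the mass-control bound \eqref{control-mass}, and filling in intermediate exponents via the embedding $L^{a_1}(\Omega)\subset L^{a_2}(\Omega)$---is the paper's strategy, but your specific doubling scheme in $a$ fails on a nonempty part of the parameter range (H2)-(i). Set $\lambda=\frac{1-p}{q}>0$. With $a_m=2a_{m-1}$ and $b_m=\lambda(a_m-1)+1$, a direct computation gives
$$
\frac{b_m}{2}-b_{m-1}=\frac{\lambda-1}{2}.
$$
So the hypothesis of Lemma \ref{lemma_sii}-(i) at stage $m$, namely a bound on $\int_\Omega v^{b_m/2}$, is implied by the stage-$(m-1)$ conclusion $\int_\Omega v^{b_{m-1}}\le M$ (via the embedding on the bounded domain) \emph{if and only if} $\lambda\le 1$. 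The case $\lambda>1$ is genuinely admissible under (H2)-(i) (e.g.\ $p=0$, $q=1/2$, $s=r=0$; and for the application to \eqref{density}, where $s=0$, $r=1$, any $0<p<1$ with small $q$ gives $\lambda>1$). In that case the required exponent $b_m/2$ strictly exceeds $b_{m-1}$ at every step---already at your first step, since $b_1/2=(\lambda+1)/2>1$ while you only hold the $L^1$ bound---so the induction never starts. The "bookkeeping" you flag and then wave away is exactly where the argument breaks: because $b-1$ is proportional to $a-1$ with slope $\lambda$, halving $a$ only "halves $b$ in the appropriate sense" when $\lambda\le1$.

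The paper repairs this by a case split. For $\lambda\le1$ it runs essentially your induction (incrementing $a=k$ by one instead of doubling, which is immaterial). For $\lambda>1$ it inverts the roles of the two exponents: it prescribes the target exponent on $v$, setting $b=k$ and $a=\frac{q}{1-p}(k-1)+1=\frac{1}{\lambda}(k-1)+1<k$, so that now $a/2\le k-1$ and $b/2=k/2\le k-1$ both follow from the stage-$(k-1)$ bounds, and the induction closes on the $v$-side while the $u$-exponents still tend to infinity. To complete your proof you need to add this second case (equivalently, at each stage double whichever of $a-1$, $b-1$ is the larger and let the other be determined by the affine relation).
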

\begin{proof} Suppose that $(1-p)/q\le 1$. We proceed by induction.
If $k=1$, our lemma follows from the fact $\int_\Omega (u+v) dx\le N,\ \forall t\geq0$. Assume that the desired result holds for $k-1$ with $k\ge 2$. Let
$$
a=k,\ \ b=(1-p)(k-1)/q+1.
$$
Then, we have $a/2, b/2\le k-1$ and hence Lemma \ref{lemma_sii} ensures that the result holds for $k$.

Suppose that $(1-p)/q> 1$. This result can be still proved by an induction analysis as above;
the only difference is that we now set $a=q(k-1)/(1-p)+1$ and so $b:=(1-p)(a-1)/q+1=k$ in Lemma \ref{lemma_sii}.
\end{proof}

\vskip5pt
Based on Lemma \ref{Lemma_rp1}, similar to Theorem \ref{Theorem_bound0},
we can use a semigroup method to establish the following uniform bounds.

\begin{Theorem}\label{Theorem_bound2}
{   Suppose that  $\beta>0$, $\gamma, \mu\ge 0$, and {\rm (H2)-(i)} holds.}  Let $(u, v)$ be a  nonnegative
 classical solution of \eqref{density1}. Then there exists $M_\infty>0$ depending on the initial data such that
 \eqref{Minfdd} holds. Moreover, there exists $N_\infty$ independent of initial data such that \eqref{Ninfdd} holds for any  global classical solution $(u, v)$.
\end{Theorem}

Theorem \ref{Theorem_boundaa} is a combination of Theorems \ref{Theorem_bound0}, \ref{Theorem_boundp} and \ref{Theorem_bound2}.

\section{Proof of Theorem  \ref{th-a}}
We first prove the following result about the local existence and positivity of the solutions of \eqref{density}.
\begin{Lemma}\label{lemma_local}
Assume that {\rm (A1)-(A2)} hold. Then \eqref{density} has a unique solution $(S, I)$ on $\bar\Omega\times [0, T_{max})$, where $T_{max}\le \infty$ is the maximal time for the existence of solution. Moreover, $(S, I)$ satisfies
$$
S(x, t), I(x, t)>0 \ \ \text{for all} \ \ (x,t)\in\bar\Omega\times(0, T_{max}),
$$
and if $T_{max}<\infty$, then
$$
\lim_{t\rightarrow T_{max}} \|S(\cdot, t)\|_{L^\infty(\Omega)}+\|I(\cdot, t)\|_{L^\infty(\Omega)}=\infty.
$$
\end{Lemma}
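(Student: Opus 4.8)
The plan is to recast \eqref{density} as an abstract semilinear evolution equation and then split the work into local existence/uniqueness, positivity, and the blow-up alternative, with the non-Lipschitz character of $S^qI^p$ as the recurring difficulty.

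First I would set $U=(S,I)$ and read \eqref{density} as $U_t=\mathcal{A}U+\mathcal{F}(x,t,U)$ on $X=C(\bar\Omega)\times C(\bar\Omega)$, where $\mathcal{A}=\mathrm{diag}(d_S\Delta,d_I\Delta)$ under the Neumann condition generates an analytic semigroup and $\mathcal{F}(x,t,S,I)=(-\beta S^qI^p+\gamma I,\ \beta S^qI^p-(\gamma+\mu)I)$. The map $\mathcal{F}$ is locally Lipschitz on all of $\mathbb{R}_+^2$ when $p,q\ge1$, but only on $\{S>0,\,I>0\}$ when $p<1$ or $q<1$, since $S^q$ (resp. $I^p$) loses Lipschitz continuity at the corresponding axis. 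In the first situation a unique maximal classical solution on $[0,T_{max})$ follows from standard analytic-semigroup theory. In the second I would use assumption (A4): since $S_0>0$ on $\bar\Omega$ when $q<1$ and $I_0>0$ on $\bar\Omega$ when $p<1$, one may replace $\mathcal{F}$ by a globally Lipschitz $\tilde{\mathcal{F}}$ that agrees with $\mathcal{F}$ above a positive threshold (e.g.\ $\tfrac12\min_{\bar\Omega}S_0$, resp.\ $\tfrac12\min_{\bar\Omega}I_0$), solve the modified problem, and invoke the positivity below to conclude that the modified solution actually solves \eqref{density} on a short interval. Uniqueness follows because any two solutions stay in $\{S,I>0\}$, where $\mathcal{F}$ is Lipschitz.

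Second, positivity rests on the quasi-positivity condition (P) and the maximum principle. For $I$, since $\beta S^qI^p\ge0$, the $I$-reaction satisfies $\beta S^qI^p-(\gamma+\mu)I\ge-CI$ on any interval where the solution is bounded; comparison with $w_t-d_I\Delta w=-Cw$, $w(\cdot,0)=I_0$, and the strong maximum principle give $I>0$ for $t>0$ (using $I_0\not\equiv0$, and $I_0>0$ when $p<1$). For $S$ with $q\ge1$ I would write $-\beta S^qI^p\ge-CS$ and argue identically. For $S$ with $0<q<1$ the sink $-\beta S^qI^p$ could a priori drive $S$ to zero—indeed Proposition~\ref{ODE-result} shows this occurs when $\gamma\equiv0$—so here I would use (A4)(ii): at a hypothetical first zero $(x_1,t_1)$ of $S$ one has $\partial_tS\le0$ and $\Delta S\ge0$, yet $-\beta S^qI^p+\gamma I=\gamma I\ge\sigma_0 I>0$ at that point, contradicting the equation. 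Hence $S>0$ on $\bar\Omega\times(0,T_{max})$.

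Finally, the blow-up alternative is the usual maximal-existence dichotomy: if $T_{max}<\infty$ while $\limsup_{t\to T_{max}}(\|S\|_{L^\infty}+\|I\|_{L^\infty})=:M<\infty$, the solution should extend past $T_{max}$, contradicting maximality. Beyond the globally Lipschitz case, the extra point is that continuation near $T_{max}$ needs the solution bounded away from the axes; this I would get from quantitative comparison depending only on $M$ and $T_{max}$: a heat-semigroup lower bound gives $I\ge\delta_I>0$ on $[\epsilon,T_{max}]$, and when $q<1$ a spatially homogeneous subsolution of $\underline{S}'=-\sigma^0M^p\,\underline{S}^q+\sigma_0\delta_I$ (using $\gamma\ge\sigma_0$) yields $S\ge\delta_S>0$, placing $U$ in the locally Lipschitz region. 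The main obstacle throughout is precisely this loss of Lipschitz continuity of $S^qI^p$ at the axes when $p<1$ or $q<1$: it is what forces hypothesis (A4), and balancing the competing sink $-\beta S^qI^p$ against the source $\gamma I$ for the $S$-component when $q<1$ is the delicate part of the argument.
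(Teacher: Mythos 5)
Your proposal is correct in substance and follows essentially the same route as the paper: local existence from standard parabolic theory on the region where the nonlinearity is Lipschitz (your truncation of $\mathcal{F}$ above a positive threshold makes explicit what the paper summarizes as ``standard theory''), positivity of $I$ by comparison with a linear problem and the strong maximum principle, positivity of $S$ by a first-touching-point contradiction using $\gamma I\ge\sigma_0 I>0$ when $0<q<1$, and then continuation once $S^qI^p$ is seen to be locally Lipschitz along the orbit. Your treatment of the blow-up alternative is in fact slightly more careful than the paper's: you note that continuation past $T_{max}$ requires the solution to stay away from the axes, and you supply quantitative lower bounds ($I\ge\delta_I$ from the heat semigroup, $S\ge\delta_S$ from a spatially homogeneous subsolution of $\underline{S}'=-\sigma^0M^p\underline{S}^q+\sigma_0\delta_I$), whereas the paper leaves this as ``a standard process.''

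The one concrete gap is in the touching-point argument for $S$ when $0<q<1$: you assert $\Delta S(x_1,t_1)\ge0$ at the first zero, but this only holds if $x_1\in\Omega$. If the first zero occurs at $x_1\in\partial\Omega$, you cannot conclude anything about $\Delta S$ there, and the argument as written fails. The paper handles this by working with the auxiliary subsolution $z$ of \eqref{auxi-a} and invoking the Hopf boundary lemma to get $\partial z/\partial\nu(x_0,t_0)<0$, contradicting the Neumann condition; the same device works for your direct argument on $S$ (near a small boundary touching point one has $\partial_t S-d_S\Delta S\ge-\sigma^0C^pS^q+\sigma_0\delta_I>0$, so $S$ is a strict supersolution of the heat equation and Hopf applies). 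You should add this boundary case; everything else stands.
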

\begin{proof} {   By (A2), the right-hand side of \eqref{density} is continuously differentiable at $(S, I)=(S_0, I_0)$.} By standard theory for parabolic equations, \eqref{density} has a unique nonnegative classical solution $(S(x, t), I(x, t))$ on $\bar\Omega\times [0, \hat t]$ for some $\hat t>0$.

To see the positivity of $S$ and $I$,  we first observe that $I$ is a supersolution to the initial-boundary value problem:
 \begin{equation}
 \left\{ \begin{array}{llll}
 \dis {   \partial_t w}-d_I\Delta w=-2\sigma^0w,&x\in\Omega,\,t\in (0, \hat t],\medskip\\
 \dis {   \partial_\nu w}=0,&x\in\partial\Omega,\,t\in(0, \hat t],\medskip\\
 \dis w(x,0)= I_0(x),&x\in\Omega,
 \end{array}\right.
 \label{2-a}
 \end{equation}
Let $\underline w$ be the solution of \eqref{2-a}. From the well-known strong maximal principle and
Hopf boundary lemma for parabolic equations, we have $\underline w>0$ for all $(x,t)\in\bar\Omega\times(0,\hat t]$. Thus, the parabolic comparison principle ensures $I(x,t)\geq\underline w(x,t)>0$ for all  $(x,t)\in\bar\Omega\times(0, \hat t]$.

It remains to show the positivity of $S$. If $q\geq1$,
as the reaction term $-\beta S^qI^p+\gamma I$ is Lipschitz with respect to $S$, a standard comparison analysis yields that
$$
S(x,t)>0 \ \ \text{for all}  \ (x,t)\in\bar\Omega\times(0, \hat t].
$$
If
$0<q<1$, by (A4), $S$ is a supersolution of the following problem:
 \begin{equation}
 \left\{ \begin{array}{llll}
 \dis {   \partial_t z}-d_S\Delta z=-C^p\sigma^0z^q+\sigma_0 \underline w(x,t),&x\in\Omega,\,(0, \hat t], \medskip \\
 \dis {   \partial_\nu z}=0,&x\in\partial\Omega,\,(0, \hat t], \medskip\\
 \dis z(x,0)= S_0(x),&x\in\Omega,
 \end{array}\right.
 \label{auxi-a}
 \end{equation}
where $C$ is some positive number such that $I(x, t)\le C$ for all $(x, t)\in \bar\Omega\times [0, \hat t]$. Denote by $z$ the unique solution of \eqref{auxi-a}.
By the comparison principle,
$$
S(x,t)\geq z(x,t) \text{ for all } (x,t)\in\bar\Omega\times(0,\hat t).
$$
Suppose to the contrary that there exists $(x_0,t_0)\in\bar\Omega\times(0,\hat t]$ such that $z(x_0,t_0)=0$. If $x_0\in\Omega$, then
$$
{   \partial_t z}(x_0,t_0)\leq0 \ \text{ and } \ \Delta z(x_0,t_0)\geq0.
$$
From this and $ \underline w(x_0,t_0)>0$, we obtain a contradiction using the first equation of \eqref{auxi-a}. If $x_0\in\partial\Omega$, by the nonnegativity of $z$, one can easily apply Hopf boundary lemma for parabolic equations to conclude that
$$
{   \partial_\nu z}(x_0,t_0)<0,
$$
which contradicts the boundary condition in \eqref{auxi-a}. Consequently,
$$
S(x,t)\geq z(x,t)>0 \ \text{ for all }  (x,t)\in\bar\Omega\times(0,\hat t].
$$
Hence, the positivity of $S, I$ on $\bar\Omega\times(0,\hat t]$ guarantees that $S^qI^p$ is locally Lipschitz. Then, it is a standard process to extend the time for the existence of solution to a maximal interval $[0, T_{max})$, where either $T_{max}=\infty$ or the solution blows up at finite time $T_{max}$.
The proof is complete.
\end{proof}

\vskip6pt
Now we can prove the global existence and uniform boundedness of the solutions of \eqref{density}:
\begin{proof}[Proof of Theorem \ref{th-a}]
Note that \eqref{density} is a special case of \eqref{density1} with $s=0$ and $r=1$. The uniform bounds in the cases $p>1$, $p=1$, and $0<p<1$ are covered by Theorems \ref{Theorem_bound0}-\ref{Theorem_boundp} and \ref{Theorem_bound2}, respectively. Thus, our assertions follow from Theorems \ref{Theorem_bound0}-\ref{Theorem_boundp} and \ref{Theorem_bound2}, and Lemma \ref{lemma_local}.
\end{proof}

\section{Proof of Theorem \ref{result1}}
%
%
%
%
%
%

We need the following lemma in order to prove Theorem \ref{result1}.
\begin{Lemma}[{\cite[Lemma 1.1]{Wang-2018}}] \label{lem1} Let $a\geq0$ and $b>0$ be constants. Assume
that $\phi\in C^1([a,\infty))$, $\psi\geq0$, $\phi$ is
bounded from below in $[a,\infty)$, and satisfies
 $$
 \phi'(t)\leq-b\psi(t)+g(t),\ \ \forall t\in[a,\infty),
 $$
where $\int_a^\infty g(t)dt<\infty$. Furthermore, assume that either $\psi\in C^1([a,\infty))$ and
$\psi'(t)\leq K$ on $[a,\infty)$, or $\psi\in C^{\alpha'}([a,\infty))$ for some constant $\alpha'\in(0,1)$
and $\|\psi\|_{C^{\alpha'}([a,\infty))}\leq K$, for some positive constant $K$. Then we have $\lim_{t\to\infty}\psi(t)=0$.
\end{Lemma}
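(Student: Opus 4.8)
The plan is to prove this via the classical Barbalat strategy: first deduce that $\psi$ is integrable on $[a,\infty)$, and then upgrade integrability to convergence to zero using the one-sided (respectively Hölder) regularity of $\psi$. Nothing from the earlier sections is needed; the argument is self-contained.

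\textbf{Integrability of $\psi$.} Integrating the differential inequality from $a$ to $t$ gives
$$\phi(t)-\phi(a)\le -b\int_a^t\psi(s)\,ds+\int_a^t g(s)\,ds.$$
Rearranging and using $\psi\ge0$, $b>0$, together with the lower bound $\phi(t)\ge \inf_{[a,\infty)}\phi=:m>-\infty$ and $\int_a^\infty g<\infty$, one obtains
$$b\int_a^t\psi(s)\,ds\le \phi(a)-m+\int_a^\infty g(s)\,ds<\infty,\qquad \forall t\ge a.$$
Letting $t\to\infty$ yields $\int_a^\infty\psi(s)\,ds<\infty$.

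\textbf{From integrability to $\psi\to0$.} I would argue by contradiction: suppose $\psi(t)\not\to0$, so that there exist $\epsilon>0$ and $t_n\to\infty$ with $\psi(t_n)\ge\epsilon$ for all $n$. The key point is to show $\psi$ stays above $\epsilon/2$ on an interval of fixed positive length attached to each $t_n$. In the $C^1$ case only the one-sided bound $\psi'\le K$ is available, so I look to the \emph{left} of $t_n$: for $s\le t_n$,
$$\psi(t_n)-\psi(s)=\int_s^{t_n}\psi'(\tau)\,d\tau\le K(t_n-s),$$
whence $\psi(s)\ge \epsilon-K(t_n-s)\ge \epsilon/2$ for $s\in[t_n-\delta,t_n]$ with $\delta:=\epsilon/(2K)$. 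In the Hölder case the estimate $|\psi(t_n)-\psi(s)|\le K|t_n-s|^{\alpha'}$ gives the same lower bound on $[t_n-\delta,t_n]$ with $\delta:=(\epsilon/(2K))^{1/\alpha'}$. Passing to a subsequence so that the intervals $[t_n-\delta,t_n]$ are pairwise disjoint, I then get
$$\int_a^\infty\psi(s)\,ds\ge\sum_n\int_{t_n-\delta}^{t_n}\psi(s)\,ds\ge\sum_n\frac{\epsilon}{2}\,\delta=\infty,$$
contradicting the integrability established above. Hence $\psi(t)\to0$.

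\textbf{Main obstacle.} The only genuine subtlety is the asymmetry of the $C^1$ hypothesis: one is given $\psi'\le K$, an upper bound on the derivative, rather than $|\psi'|\le K$. This means $\psi$ may decrease arbitrarily fast, so a symmetric neighborhood of $t_n$ is of no use; one must instead estimate $\psi$ on the interval immediately to the left of $t_n$, where $\psi'\le K$ precisely controls how much $\psi$ could have risen in order to reach the large value $\psi(t_n)$. Once this left-interval choice is made, both cases collapse to the same disjoint-interval contradiction, and the remaining computations are routine.
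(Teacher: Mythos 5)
Your proof is correct, and since the paper does not prove this lemma itself (it is quoted verbatim from \cite{Wang-2018}), the right comparison is with the standard argument there, which yours essentially reproduces: integrate the differential inequality to get $\psi\in L^1([a,\infty))$, then run a Barbalat-type contradiction using the regularity of $\psi$, with the key observation that the one-sided bound $\psi'\le K$ forces you to estimate $\psi$ on the interval immediately to the \emph{left} of each $t_n$. The only cosmetic point is that the step $\int_a^t g\le\int_a^\infty g$ implicitly assumes $g\ge 0$ (true in the paper's application); in general one should instead use $\sup_{t\ge a}\int_a^t g<\infty$, which follows from convergence of the improper integral.
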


We also recall the well-known Poincar\'{e} inequality.
\begin{Lemma} \label{lem2} The following inequality holds:
 $$
 \lambda_1\int_\Omega|g(x)-\hat g|^2dx\leq\int_\Omega|\nabla g(x)|^2dx,\ \ \forall g\in H^1(\Omega),
 $$
where $\hat g=\frac{1}{|\Omega|}\int_\Omega g(x)dx$, and $\lambda_1$ is the
first positive eigenvalue of the Laplacian operator $-\Delta$ with homogeneous Neumann boundary condition.
\end{Lemma}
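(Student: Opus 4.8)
The plan is to derive this Poincar\'{e}--Wirtinger inequality from the variational characterization of $P(\Omega)$, the first positive eigenvalue of the Neumann Laplacian. The key observation is that $w := g - \hat g$ has zero mean, $\int_\Omega w\, dx = 0$, and satisfies $\nabla w = \nabla g$ since $\hat g$ is a constant; thus the claimed inequality is precisely the assertion that the Rayleigh quotient $\int_\Omega |\nabla w|^2 / \int_\Omega w^2$ is bounded below by $P(\Omega)$ on the subspace of $H^1(\Omega)$ consisting of mean-zero functions.

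First I would recall the spectral theory of $-\Delta$ with homogeneous Neumann boundary conditions on the bounded smooth domain $\Omega$: by the Rellich--Kondrachov compactness of the embedding $H^1(\Omega) \hookrightarrow L^2(\Omega)$ together with the spectral theorem for compact self-adjoint operators, the spectrum is discrete, $0 = \lambda_0 < \lambda_1 \le \lambda_2 \le \cdots \to \infty$, with an $L^2$-orthonormal basis of eigenfunctions $\{\phi_k\}$; the eigenvalue $\lambda_0 = 0$ corresponds to the constants, and $\lambda_1 = P(\Omega)$ by definition. The min--max (Courant--Fischer) principle then yields
\[
P(\Omega) = \min\left\{ \frac{\int_\Omega |\nabla \varphi|^2\, dx}{\int_\Omega \varphi^2\, dx} : \varphi \in H^1(\Omega),\ \varphi \ne 0,\ \int_\Omega \varphi\, dx = 0 \right\}.
\]
Applying this to $\varphi = w$ (the case $w \equiv 0$ being trivial) gives $\int_\Omega |\nabla g|^2 = \int_\Omega |\nabla w|^2 \ge P(\Omega) \int_\Omega w^2 = P(\Omega) \int_\Omega |g - \hat g|^2$, which is the asserted inequality.

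Alternatively, one can argue by eigenfunction expansion. Writing $g = \sum_{k \ge 0} c_k \phi_k$ with $c_k = \int_\Omega g\, \phi_k\, dx$, the zero mode reproduces the average, $c_0 \phi_0 = \hat g$, so that $g - \hat g = \sum_{k \ge 1} c_k \phi_k$. Parseval's identity then gives $\int_\Omega |g - \hat g|^2 = \sum_{k \ge 1} c_k^2$, while integration by parts together with the orthogonality relation $\int_\Omega \nabla \phi_j \cdot \nabla \phi_k\, dx = \lambda_k \delta_{jk}$ (the boundary term vanishing by the Neumann condition) gives $\int_\Omega |\nabla g|^2 = \sum_{k \ge 1} \lambda_k c_k^2$. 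Since $\lambda_k \ge \lambda_1 = P(\Omega)$ for every $k \ge 1$, the inequality follows term by term.

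The genuinely substantive input is the discreteness of the Neumann spectrum, which rests on the compact embedding $H^1(\Omega) \hookrightarrow L^2(\Omega)$; this is where the hypotheses that $\Omega$ is bounded with smooth boundary enter, and it is the only nonelementary ingredient. Once this spectral framework is in place, the remainder is a routine application of min--max (or of Parseval and orthogonality), so I do not anticipate any real obstacle beyond correctly invoking the variational characterization of the first positive eigenvalue restricted to the mean-zero subspace.
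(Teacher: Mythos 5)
Your proof is correct. Note, however, that the paper offers no proof of this lemma at all: it simply ``recalls the well-known Poincar\'{e} inequality'' and cites $P(\Omega)$ as the first positive Neumann eigenvalue, so there is no argument in the paper to compare against. Your derivation --- observing that $w=g-\hat g$ is mean-zero with $\nabla w=\nabla g$ and invoking the Courant--Fischer characterization of $\lambda_1=P(\Omega)$ on the mean-zero subspace (equivalently, the eigenfunction expansion with Parseval and the orthogonality $\int_\Omega\nabla\phi_j\cdot\nabla\phi_k\,dx=\lambda_k\delta_{jk}$) --- is the standard proof and correctly supplies what the paper leaves implicit, with the compact embedding $H^1(\Omega)\hookrightarrow L^2(\Omega)$ rightly identified as the only nonelementary ingredient.
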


We are ready to present the proof of Theorem \ref{result1}.
\begin{proof}[Proof of Theorem \ref{result1}]
Noticing Theorem \ref{th-a}, let $C>0$ be such that $S(x,t)$, $I(x,t)\leq C$ for all  $(x,t)\in\bar\Omega\times(0,\infty)$.
With the help of the well-known parabolic-type $L^p$ and Schauder estimates
and embedding theorems (see, for instance, \cite[Theorems 7.15, 7.20]{Lie}), one can employ
standard argument to conclude that
 \bes
 \|\nabla S(x,\cdot)\|_{C^{\alpha/2}([1,\infty))}+\|\nabla I(x,\cdot)\|_{C^{\alpha/2}([1,\infty))}
 \leq C_0,\ \ \forall x\in\bar\Omega,
 \label{2.2}
 \ees
 \bes
 \|S(x,\cdot)\|_{C^{(\alpha+1)/2}([1,\infty))}
 +\|I(x,\cdot)\|_{C^{(\alpha+1)/2}([1,\infty))}\leq C_0,\ \ \forall x\in\bar\Omega,
 \label{2.3}
 \ees
and
 \bes
 \|S(\cdot,t)\|_{C^{1+\alpha}(\bar\Omega)}
 +\|I(\cdot,t)\|_{C^{1+\alpha}(\bar\Omega)}\leq C_0,\ \ \forall t\geq1.
 \label{2.4}
 \ees
For this, one may refer to \cite[Theorems 2.2, 2.3]{Wang-2018} and \cite[Theorem A2]{BDG}).
Here the positive constant $C_0$ is independent of $S,\,I$ and $t\geq1$.

Integrating both equations of \eqref{density} over $\Omega$ and adding the resulting identities, we obtain
\begin{equation}\label{2.5} \ \
\frac{d}{dt}\int_\Omega (S(x,t)+I(x,t))dx=-\int_\Omega \mu(x,t)I(x,t)dx
\leq-\sigma_0\int_\Omega I(x,t)dx<0,\ \forall t>0.
\end{equation}
This implies that $\int_\Omega (S(x,t)+I(x,t))dx$ is decreasing with respect to the time $t\geq0$.
Thus, the limit $\lim_{t\to\infty}\int_\Omega (S(x,t)+I(x,t))dx$ exists.
By taking
 $$
 \phi(t)=\int_\Omega S(x,t)+I(x, t)dx,\ \ \psi(t)=\int_\Omega I(x,t)dx,\ \ g(t)=0,\ \ t\geq1
 $$
in Lemma \ref{lem1}, combined with \eqref{2.3} and \eqref{2.5}, it follows that
 $$
 \int_\Omega I(x,t)dx\to0,\ \ \mbox{as}\ t\to\infty.
 $$
In view of \eqref{2.4} and the standard embedding theorem, it is necessary that
 \bes
 \label{I-converge}
 I(\cdot,t)\to0 \ \ \mbox{uniformly on}\ \bar\Omega,\ \ \mbox{as}\ t\to\infty.
 \ees
As $\lim_{t\to\infty}\int_\Omega (S(x,t)+I(x,t))dx$ exists, \eqref{I-converge} indicates that
$\lim_{t\to\infty}\int_\Omega S(x,t)dx$ also exists. Hence, we may assume that
 \bes
 \label{S-converge-1}
 \frac{1}{|\Omega|}\int_\Omega S(x,t)dx\to S_*,\ \ \mbox{as}\ t\to\infty
 \ees
for some constant $S_*\geq0$.

Next, we are going to determine the limit of the component $S$ as $t\to\infty$. Multiplying the first equation in \eqref{density} by $S$ and then integrating over $\Omega$ yield
 \begin{equation}
 \begin{aligned}
 \label{2.6}
 \frac{1}{2}\frac{d}{dt}\int_\Omega S^2(x,t)dx&=-d_S\int_\Omega|\nabla S(x,t)|^2dx
 -\int_\Omega \beta(x,t)S^{q+1}(x,t)I^p(x,t)dx\\
 &\ \ \ \ +\int_\Omega \gamma(x,t)S(x,t)I(x,t)dx\\
 &\leq-d_S\int_\Omega|\nabla S(x,t)|^2dx+C\sigma^0\int_\Omega I(x,t)dx,\ \ \forall t>0.
 \end{aligned}
 \end{equation}
In Lemma \ref{lem1}, we now set
 $$
 \phi(t)=\frac{1}{2}\int_\Omega S^2(x,t)dx,\ \ \psi(t)
 =d_S\int_\Omega|\nabla S(x,t)|^2dx,\ \ g(t)=C\sigma^0\int_\Omega I(x,t)dx,\ \ t\geq1.
 $$
Thanks to \eqref{2.2}, $\phi,\,\psi$ satisfy the conditions in Lemma \ref{lem1}.

In order to apply Lemma \ref{lem1}, it remains to verify $\int_1^\infty g(t)dt<\infty$.
In fact, integrating \eqref{2.5} from $0$ to $\infty$ with respect to $t$, we deduce that
\begin{eqnarray}
\int_\Omega (S(x,t)+I(x,t))dx\leq\int_\Omega (S_0(x)+I_0(x))dx-\sigma_0\int_0^\infty\int_\Omega I(x,t)dx,
\nonumber
\end{eqnarray}
which in turn yields
\begin{eqnarray}
\int_1^\infty g(t)dt=\int_1^\infty\int_\Omega I(x,t)dx\leq\frac{1}{\sigma_0}\int_\Omega (S_0(x)+I_0(x))dx<\infty.
\nonumber
\end{eqnarray}
Therefore, Lemma \ref{lem1} ensures
 $$
 \int_\Omega|\nabla S(x,t)|^2dx\to0,\ \ \mbox{as}\ t\to\infty.
 $$
This, combined with Lemma \ref{lem2} and \eqref{S-converge-1}, immediately infers that
 \bes
 \nonumber
 \int_\Omega|S(x,t)-S_*|^2dx\to0 ,\ \ \mbox{as}\ t\to\infty.
 \ees
Then, by \eqref{2.4} and the standard embedding theorem, we have
\bes
 \label{S-converge-2}
 S(x,t)\to S_*\ \ \mbox{uniformly on}\ \bar\Omega ,\ \ \mbox{as}\ t\to\infty.
 \ees

In the following, we will show $S_*=0$ if $p<1$, and $S_*>0$ if $p\geq1$.

We first consider the case of $p<1$. We proceed indirectly by supposing that $S_*>0$.
Then, thanks to \eqref{I-converge}, \eqref{S-converge-2} and our assumption (A3),
there is a large number $T_0>0$ such that $\beta S^q-(\gamma+\mu)I^{1-p}>0$
for all $(x,t)\in\bar\Omega\times[T_0,\infty)$. Thus, $I$ satisfies
 $$
 {   \partial_t I}-d_I\Delta I\geq I^p[\beta S^q-(\gamma+\mu)I^{1-p}]>0,\ \ \text{for all } x\in\Omega,\,t\geq T_0.
 $$
As a result, a simple comparison analysis guarantees $I(x,t)\geq \min_{\bar\Omega}I(x,T_0)>0$ for all $(x,t)\in\bar\Omega\times[T_0,\infty)$, contradicting with \eqref{I-converge}. This shows that $S_*=0$.

\vskip6pt
We now verify that $S_*>0$ if $p\ge 1$.
We argue by contradiction again and suppose that $S_*=0$. So it holds
 \bes
 (S,I)\to(0,0)\ \ \mbox{ uniformly on}\ \bar\Omega,\ \mbox{ as}\ t\to\infty.
 \label{false}
 \ees
Due to $p\geq1$, one can find a large $T_1>0$ such that
$$\beta(x,t)S^q(x,t)-(\gamma(x,t)+\mu(x,t))\leq-\sigma_0/2,\ \ I^p(x,t)\leq I(x,t)$$ for all
$(x,t)\in\bar\Omega\times[T_1,\infty)$. Consequently, $I$ is a subsolution to the following ODE problem:
 \begin{equation}
 \left \{\begin{array}{lll}
 {   w'}=-\frac{1}{2}\sigma_0w,\ t>T_1,\medskip\\
 \dis w(T_1)=\max_{x\in\bar\Omega}I(x,T_1)>0.
 \end{array}\right.
 \label{auxi-2}
 \end{equation}
Thus, it holds
 \bes
 \label{2.7}
 I(x,t)\leq w(t)=\max_{x\in\bar\Omega}I(x,T_1)e^{-\frac{1}{2}\sigma_0(t-T_1)},\ \ \forall (x,t)\in\bar\Omega\times[T_1,\infty).
 \ees

We have to distinguish two different cases: $q\geq1$ and $0<q<1$. In the latter case, we need (A4)-(ii), i.e., $\gamma\geq\sigma_0$ on $\bar\Omega\times[0,\infty)$.

We first treat the case $q\geq1$. By \eqref{false} and $q\geq1$,
we may assume that $S^q(x,t)\leq S(x,t)$ for all $(x,t)\in\bar\Omega\times[T_1,\infty)$.
So one can see from the first equation in \eqref{density} and \eqref{2.7} that
 $$
  {   \partial_t S}-d_S\Delta S\geq -\theta e^{-\delta(t-T_1)}S,\ \ \ x\in\Omega,\,t\geq T_1,
 $$
where $\theta=\sigma^0\Big(\max_{x\in\bar\Omega}I(x,T_1)\Big)^p$ and $\delta=p\sigma_0/2$.
By considering the ODE problem
 \begin{equation}
 \left \{\begin{array}{lll}
 \medskip
 \dis  {   w'}=-\theta e^{-\delta(t-T_1)}w,\ t>T_1,\\
 \dis w(T_1)=\max_{x\in\bar\Omega}S(x,T_1)>0,
 \end{array}\right.
 \label{auxi-3}
 \end{equation}
we have
 $$
 S(x,t)\geq w(t),\ \ \ \forall x\in\bar\Omega,\,t\geq T_1.
 $$
On the other hand, solving \eqref{auxi-3} yields
 $$
 w(t)=\max_{x\in\bar\Omega}S(x,T_1)e^{\frac{\theta}{\delta}[e^{-\delta(t-T_1)}-1]}
 \geq\max_{x\in\bar\Omega}S(x,T_1)e^{-\frac{\theta}{\delta}},\ \ \ t\geq T_1.
 $$
Henceforth, we have
 $$
 S(x,t)\geq \max_{x\in\bar\Omega}S(x,T_1)e^{-\frac{\theta}{\delta}},\ \ \ \forall x\in\bar\Omega,\,t\geq T_1,
 $$
which leads to a contradiction.

We now assume that $0<q<1$ and $\gamma\geq\sigma_0$ on $\bar\Omega\times[0,\infty)$.
By \eqref{false} and $p\geq1$,
one may assume that $$I^p(x,t)\leq I(x,t),\ \ \gamma(x,t)-\beta(x,t)S^q>0$$ for all $(x,t)\in\bar\Omega\times[T_1,\infty)$.
So it is clear from the first equation in \eqref{density} that
 $$
 {   \partial_t S}-d_S\Delta S>0,\ \ \ x\in\Omega,\,t\geq T_1,
 $$
from which one easily knows that $S(x,t)\geq\min_{\bar\Omega}S(x,T_1)>0$ for all $(x,t)\in\bar\Omega\times[T_1,\infty)$, a contradiction with \eqref{false}. Thus, we have proved $S_*>0$.

Finally, we show $S^q_*\leq\sup_{\Omega\times(0,\infty)}\frac{\gamma+\mu}{\beta}$ for $p=1$.
Indeed, if this is false, then there exists $T_2>0$ such that $(\beta S^q-\gamma-\mu)I>0$ on $\bar\Omega\times [T_2, \infty)$. By the equation of $I$, we have
$$
 {   \partial_t I}-d_I\Delta I>0, \ \ x\in\Omega, t\ge T_2,
$$
from which we obtain $I(x, t)\ge \min_{x\in\bar\Omega} I(x, T_2)>0$. This contradicts the fact that $I(\cdot, t)\rightarrow 0$ in $C(\bar\Omega)$ as $t\rightarrow\infty$. The proof is complete.
\end{proof}

\section{Proof of Theorem \ref{th-a4}}
In this section, we consider \eqref{density} with $\mu=0$; in this case, \eqref{density} becomes the following SIS 
 model:
 \begin{equation}
 \left\{ \begin{array}{llll}
 {   \partial_t S}-d_S\Delta S=-\beta(x,t)S^qI^p+\gamma(x,t)I,&x\in\Omega,\,t>0, \medskip\\
 {   \partial_t I}-d_I\Delta I=\beta(x,t)S^qI^p-\gamma(x,t)I,&x\in\Omega,\,t>0, \medskip\\
  {   \partial_\nu S}= {   \partial_\nu I}=0,&x\in\partial\Omega,\,t>0, \medskip\\
 \dis S(x,0)=S_0(x),\,I(x,0)= I_0(x),&x\in\Omega.
 \end{array}\right.
 \label{SIS}
 \end{equation}
 Adding up the first two equations and integrating over $\Omega\times (0, t)$, we obtain
 $$
 \int_\Omega (S(x, t)+I(x, t)) dx=\int_\Omega (S_0+I_0) dx.
 $$
This leads us to assume that the total population is a constant, i.e.,
 \begin{equation}\label{initialC}
 \int_\Omega (S_0+I_0)=N,
 \end{equation}
for a fixed constant $N>0$.


Let $X=C(\bar\Omega)$ and $C_\omega$ be the space of all periodic continuous functions from $\mathbb{R}$ to $X$ with period $\omega$.

When $p=1$ in system \eqref{SIS}, it is not hard to check that $E_0:=({N}/{|\Omega|}, 0)$
is the unique disease-free equilibrium of \eqref{SIS}.
Linearizing the second equation of \eqref{SIS} at $E_0$, we get
  \begin{equation}
  {   \partial_t I}-d_I\Delta I=\beta(x, t)\left(\frac{N}{|\Omega|}\right)^q I-\gamma(x, t) I.
 \end{equation}
Let $V(t, s)$ be the evolution operator on $C_\omega$ induced by the solution of
  \begin{equation}
  \left\{
  \begin{array}{lll}
 {   \partial_t I}-d_I\Delta I=-\gamma(x, t) I, &x\in\Omega,\,t>0, \medskip\\
 {   \partial_\nu I}=0, &x\in\partial\Omega,\,t>0, \medskip\\
   \dis I(x,0)= I_0(x), &x\in\Omega.
   \end{array}
   \right.
 \end{equation}
Let $\mathcal{L}: C_\omega\rightarrow C_\omega$ be given by
$$
\mathcal{L}(\phi)(t):= \int_{-\infty}^t V(t, s)\beta\left(\frac{N}{|\Omega|}\right)^q\phi(\cdot, s)ds, \ \phi\in C_\omega.
$$
Then the basic reproduction number $\mathcal{R}_0$ is defined as the spectral radius of $\mathcal{L}$, i.e.,
\begin{equation}\label{Rdef}
\mathcal{R}_0=r(\mathcal{L}).
\end{equation}
Similar to \cite{PZ}, $1-\mathcal{R}_0$ has the same sign as $\lambda_0$, where $\lambda_0$ is the principal eigenvalue of the  periodic-parabolic eigenvalue problem
  \begin{equation}\label{eig}
  \left\{
  \begin{array}{lll}
  {   \partial_t \varphi}-d_I\Delta \varphi=\beta(x, t)\left(\frac{N}{|\Omega|}\right)^q\varphi-\gamma(x, t)\varphi+\lambda\varphi, &x\in\Omega,\,t>0, \medskip \\
  {   \partial_\nu \varphi}=0, &x\in\partial\Omega,\,t>0, \medskip \\
   \dis \varphi(x,\omega)= \varphi(x, 0), &x\in\Omega.
   \end{array}
   \right.
 \end{equation}


Before proving Theorem \ref{th-a4}, we prepare the following two results on the uniform weak persistence property. Since $S^qI^p$ is not locally Lipschitz unless $S, I>0$,  the solutions of \eqref{density1} do not induce a semiflow on a complete metric space. Therefore, we cannot follow the standard arguments in dynamical system theory here. Our proof of uniform weak persistence is inspired by \cite{Ducrot}.

\begin{Lemma}[Uniform weak persistence] \label{lemma_weak}
Assume that {\rm(A1)-{\color{red}(A2)}} and {\color{red}\rm(A4)} hold, and let $0<p<1$. Then there exists $\epsilon_0>0$ independent of initial data such that for any solution $(S, I)$ of \eqref{SIS}-\eqref{initialC} we have
$$
\limsup_{t\rightarrow\infty} \min_{x\in\bar\Omega} S(x, t),\ \  \limsup_{t\rightarrow\infty}  \min_{x\in\bar\Omega} I(x, t) \ge \epsilon_0.
$$
\end{Lemma}

\begin{proof}
Suppose on the contrary that the conclusion does not hold. Then there exist solutions $\{(S_n(x, t), I_n(x, t))\}$ of \eqref{SIS}-\eqref{initialC}  such that one of the following two cases happens:

 \begin{enumerate}
 \item[] \hspace{-0.5cm}{\bf Case 1.}  $\min_{x\in\bar\Omega} S_n(x, t+t_n)<1/n$ for all $t\ge 0$. In particular, $S_n(x_n, t_n)<1/n$;

 \item[] \hspace{-0.5cm}{\bf Case 2.} $\min_{x\in\bar\Omega} I_n(x, t+t_n)<1/n$ for all $t\ge 0$. In particular, $I_n(x_n, t_n)<1/n$;
 \end{enumerate}
where $x_n\in\bar\Omega$ for all $n$ and $t_n\rightarrow \infty$. Restricted to a subsequence if necessary, we may assume $x_n\rightarrow x_0\in\bar\Omega$, $\beta(x, t+t_n)\rightarrow \hat\beta(x, t)$ and $\gamma(x, t+t_n)\rightarrow\hat\gamma(x, t)$ uniformly for $x\in\bar\Omega$ and $t\in \mathbb{R}$.

Define
$$
(S^n(x, t), I^n(x, t))=(S_n(x, t+t_n), I_n(x, t+t_n)), \ \ (x, t)\in\bar\Omega\times [-t_n, \infty).
$$
Then
$$
S^n(x_n, 0)<{   1/n}\ \  \text{or} \ \  I^n(x_n, 0)<{   1/n}.
$$

In view of Theorem \ref{Theorem_boundaa}, $\{(S^n, I^n)\}$ is uniformly bounded in $C_{loc}(\bar\Omega\times\mathbb{R})\times C_{loc}(\bar\Omega\times\mathbb{R})$. By the parabolic-type $L^p$ estimate, $\{(S^n, I^n)\}$ is uniformly bounded in $W^{2, 1; p}_{loc}(\bar\Omega\times\mathbb{R})\times W^{2, 1; p}_{loc}(\bar\Omega\times\mathbb{R})$. Therefore, from the Sobolev embedding theorem, up to a subsequence if necessary, it follows that
$$
S^n(x, t)\rightarrow S^\infty(x, t) \text{ and } I^n(x, t)\rightarrow I^\infty(x, t)   \text{ in } C_{loc}(\bar\Omega\times\mathbb{R}),
$$
where $(S^\infty, I^\infty)$ is a bounded nonnegative entire solution of the following problem:
 \begin{equation}
 \left\{ \begin{array}{llll}
{   \partial_t S}-d_S\Delta S=-\hat\beta(x,t)S^qI^p+\hat\gamma(x,t)I,&x\in\Omega,\,t\in\mathbb{R}, \medskip\\
{   \partial_t I}-d_I\Delta I=\hat\beta(x,t)S^qI^p-\hat\gamma(x,t)I,&x\in\Omega,\,t\in\mathbb{R},\medskip\\
 {   \partial_\nu S}={   \partial_\nu I}=0,&x\in\partial\Omega,\,t\in\mathbb{R}, \medskip\\
  \int_\Omega (S(x, t)+I(x, t))dx=N, &t\in\mathbb{R}.
 \end{array}\right.
 \label{SIS_entire}
 \end{equation}
 By the second equation of \eqref{SIS_entire},
 $$
{   \partial_t I^\infty}-d_I\Delta I^\infty\ge -\hat\gamma(x,t)I^\infty, \ \ (x, t)\in \bar\Omega\times\mathbb{R}.
 $$
 It follows from the comparison principle that either $I^\infty=0$ or $I^\infty(x, t)>0$ for all $(x, t)\in \bar\Omega\times\mathbb{R}$. For the latter case, using the comparison principle and Hopf Lemma as in the proof of Lemma \ref{lemma_local}, one can show that $S^\infty(x, t)>0$ for all $(x, t)\in \bar\Omega\times\mathbb{R}$. Therefore, $(S^\infty, I^\infty)$ satisfies exactly one of the following two possibilities:
 
 (I). $(S^\infty, I^\infty)=(S^\infty, 0)$, where $S^\infty$ is a bounded nonnegative entire solution of the following problem:
  \begin{equation}
 \label{SIS_entire1}
 \left\{ \begin{array}{llll}
{   \partial_t S}-d_S\Delta S=0,&x\in\Omega,\,t\in\mathbb{R}, \medskip\\
 
{   \partial_\nu S}=0,&x\in\partial\Omega,\,t\in\mathbb{R},\medskip\\
 \int_\Omega S(x, t)dx=N, &t\in\mathbb{R}.
 \end{array}\right.
 \end{equation}
  By the maximum principle, $S^\infty(x, t)>0$ for all $(x, t)\in \bar\Omega\times\mathbb{R}$ (Indeed, it is not hard to show $S^\infty=N/|\Omega|$).

 (II). $S^\infty(x, t), I^\infty(x, t)>0$ for all  $(x, t)\in \bar\Omega\times\mathbb{R}$.

\vskip6pt
We now show that either case will lead to a contradiction.

In \textbf{Case 1},  since $S^n(x_n, 0)<1/n$, we then have $S^\infty(x_0, 0)=0$, which contradicts with (I) or (II) above.

In \textbf{Case 2},  since $I^n(x_n, 0)<1/n$, it is necessary that $I^\infty(x_0, 0)=0$.
Therefore, from the above analysis, we have $(S^\infty, I^\infty)=(S^\infty, 0)$ satisfying (I).

 We claim that, for any $h>0$, the following hold:
\begin{equation}\label{ine1}
 \limsup_{n\rightarrow\infty} \inf_{(x, t)\in\bar\Omega\times [-h, \infty)} S^n(x, t)>0
\end{equation}
and
\begin{equation}\label{ine2}
 \lim_{n\rightarrow\infty} \sup_{(x, t)\in\bar\Omega\times [-h, \infty)} I^n(x, t)=0.
\end{equation}

We prove these two claims by contradiction. Suppose on the contrary that \eqref{ine1} does not hold. Then there exist $h>0$  and a subsequence of $\{S^n\}$, still denoted by itself, such that
$$
S^n(y_n, \tau_n)<{   1/n}
$$
for some $\tau_n\geq-h$ and $y_n\in\bar\Omega$. As before, we may assume $y_n\rightarrow y_0\in\bar\Omega$, $\beta(x, t+t_n+\tau_n)\rightarrow \bar\beta(x, t)$ and $\gamma(x, t+t_n+\tau_n)\rightarrow \bar\gamma(x, t)$ uniformly in $x$ and $t$. Define
$$
(\bar S^n(x, t), \bar I^n(x, t))=(S^n(x, t+\tau_n), I^n(x, t+\tau_n)), \ \ x\in\bar\Omega, t>-\tau_n-t_n.
$$
Then
\begin{equation}\label{barsn}
\bar S^n(y_n, 0)<{   1/n}
\end{equation}
and, up to a subsequence if necessary, 
$$(\bar S^n, \bar I^n)\rightarrow (\bar S^\infty, \bar I^\infty) \ \ \text{in}\  C_{loc}(\bar\Omega\times\mathbb{R})\times C_{loc}(\bar\Omega\times\mathbb{R}),
$$
where $(\bar S^\infty, \bar I^\infty)$ is a nonnegative bounded entire solution of \eqref{SIS_entire} with $\hat \beta$ and $\hat\gamma$ replaced by $\bar\beta$ and $\bar\gamma$, respectively.
By \eqref{barsn}, we have $\bar S^\infty(y_0, 0)=0$.
This contradicts with $\bar S^\infty(x, t)>0$ for all $(x, t)\in \bar\Omega\times\mathbb{R}$. Thus, \eqref{ine1} is verified.

Suppose on the contrary that \eqref{ine2} does not hold. Then there exist $h, \epsilon_0>0$ and a subsequence of $\{I^n\}$, still denoted by itself,  such that
$$
I^n(z_n, s_n)>\epsilon_0
$$
for some $s_n\geq-h$ and $z_n\in\bar\Omega$. We may assume $z_n\rightarrow z_0\in\bar\Omega$, $\beta(x, t+t_n+s_n)\rightarrow \tilde\beta(x, t)$ and $\gamma(x, t+t_n+s_n)\rightarrow \tilde\gamma(x, t)$ uniformly in $x$ and $t$.
Define
$$
(\tilde S^n(x, t), \tilde I^n(x, t))=(S^n(x, t+s_n), I^n(x, t+s_n)), \ \ x\in\bar\Omega, t>-s_n-t_n.
$$
Then, we have
\begin{equation}\label{tildeI1}
\tilde I^n(z_n, 0)>\epsilon_0
\end{equation}
 and
\begin{equation}\label{tildeI2}
 \min_{x\in\bar\Omega} \tilde I^n(x, t)<{   1/n}, \  \forall t>-s_n
\end{equation}
  As before, up to a subsequence if necessary, 
  $$
  (\tilde S^n, \tilde I^n)\rightarrow (\tilde S^\infty, \tilde I^\infty) \ \ \text{in}\  C_{loc}(\bar\Omega\times\mathbb{R})\times C_{loc}(\bar\Omega\times\mathbb{R}),
$$
where $(\tilde S^\infty, \tilde I^\infty)$ is a nonnegative bounded entire solution of \eqref{SIS_entire} with $\hat \beta$ and $\hat\gamma$ replaced by $\tilde\beta$ and $\tilde\gamma$, respectively.
By \eqref{tildeI1}, $\tilde I^\infty(z_0, 0)>\epsilon_0$. By \eqref{tildeI2}, we must have $\tilde I^\infty=0$, which is a contradiction. This proves \eqref{ine2}.

By means of \eqref{ine1}-\eqref{ine2} and $0<p<1$, there exist $\delta_0>0$ and $N_0>0$ such that
$$
\beta(x, t+t_{N_0})\frac{[S^{N_0}(x, t)]^q}{[I^{N_0}(x, t)]^{1-p}}-\gamma(x, t+t_{N_0})>\delta_0\ \ \text{for all } (x, t)\in\bar\Omega\times [0, \infty).
$$
Therefore, $I^{N_0}(x, t)$ satisfies
$$
{   \partial_t I^{N_0}}-d_I\Delta I^{N_0} \ge \delta_0 I^{N_0}.
$$
By the parabolic comparison principle, we have
$$
I^{N_0}(x, t)\ge e^{\delta_0 t} I^{N_0}(x, 0)\rightarrow \infty,
$$
which is impossible. This finishes the proof in {\bf Case 2}.
\end{proof}

\begin{Lemma}[Uniform weak persistence] \label{lemma_weak2}
Assume that {\rm(A1)-{\color{red}(A2)}} and {\color{red}{\rm(A4)}} hold, and let $p=1$. If $\mathcal{R}_0>1$, then there exists $\epsilon_0>0$ independent of initial data such that for any solution $(S, I)$ of \eqref{SIS}-\eqref{initialC} we have
$$
\limsup_{t\rightarrow\infty} \min_{x\in\bar\Omega} S(x, t),\ \  \limsup_{t\rightarrow\infty}  \min_{x\in\bar\Omega} I(x, t) \ge \epsilon_0.
$$
\end{Lemma}
\begin{proof} We juts need to modify the proof of Lemma \ref{lemma_weak}. Let $S^n,\,I^n$ be as in proof of Lemma \ref{lemma_weak}, and we can obtain \eqref{ine1}-\eqref{ine2} using the same argument as there. 

We further claim that for any $h>0$:
\begin{equation}\label{ine3}
 \lim_{n\rightarrow\infty} \sup_{(x, t)\in\bar\Omega\times [-h, \infty)} \left |S^n(x, t)-\frac{N}{|\Omega|} \right |=0.
\end{equation}
Suppose on the contrary that \eqref{ine3} does not hold. Then there exist $h,\epsilon_0>0$ such that
$$
 \left |S^n(z_n, s_n)-\frac{N}{|\Omega|} \right |>\epsilon_0
$$
for some $s_n\geq-h$ and $z_n\in\bar\Omega$. We may assume $z_n\rightarrow z_0\in\bar\Omega$, $\beta(x, t+t_n+s_n)\rightarrow \tilde\beta(x, t)$ and $\gamma(x, t+t_n+s_n)\rightarrow \tilde\gamma(x, t)$ uniformly in $x$ and $t$.

Similar to the proof of Lemma \ref{lemma_weak}, define
$$
(\tilde S^n(x, t), \tilde I^n(x, t))=(S^n(x, t+s_n), I^n(x, t+s_n)), \ \ x\in\bar\Omega, t>-s_n-t_n.
$$
Then,
\begin{equation}\label{tildeSS}
 \left |\tilde S^n(z_n, 0)-\frac{N}{|\Omega|} \right |>\epsilon_0.
\end{equation}
Moreover, up to a subsequence if necessary, 
$$
(\tilde S^n, \tilde I^n)\rightarrow (\tilde S^\infty, \tilde I^\infty) \ \ \text{in}\  C_{loc}(\bar\Omega\times\mathbb{R})\times C_{loc}(\bar\Omega\times\mathbb{R}),
$$
where $(\tilde S^\infty, \tilde I^\infty)$ is a nonnegative bounded entire solution of \eqref{SIS_entire} with $\hat \beta$ and $\hat\gamma$ replaced by $\tilde\beta$ and $\tilde\gamma$, respectively. By \eqref{ine2}, we must have $\tilde I^\infty=0$. Therefore, $\tilde S^\infty$ is a nonnegative bounded entire solution of \eqref{SIS_entire1}. 

We further conclude that $\tilde S^\infty= {N}/ {|\Omega|}$. To see this, we can write $\tilde S^\infty(x,t)=\sum_{i=1}^\infty a_i(t)\phi_i(x)$, where $\{\phi_i\}_{i=1}^\infty$ are the eigenvectors of $-\Delta$ with homogeneous Neumann boundary condition, and they are an orthonormal basis of $L^2(\Omega)$. Let $\{\lambda_i\}_{i=1}^\infty$ with $0=\lambda_1<\lambda_2\leq\lambda_3\cdots$ be the corresponding eigenvalues. Clearly, $\phi_1$ is constant.  

Substituting  $\tilde S^\infty(x,t)=\sum_{i=1}^\infty a_i(t)\phi_i(x)$ into the first equation of \eqref{SIS_entire1}, we can easily see that 
\begin{equation}\label{tildeSS-aa}
\tilde S^\infty(x, t)=\sum_{i=1}^\infty   a_i e^{\lambda_i t} \phi_i(x),
\end{equation}
where $a_i\,(i\geq1)$ are constants. Multiplying \eqref{tildeSS-aa} by $\phi_i(x)$ for any given $i\geq1$, and then integrating over $\Omega$, we deduce that
$$
a_i e^{\lambda_i t} =\int_\Omega  \tilde S^\infty(x, t)\phi_i(x)dx, \ \ \forall i\ge 1.
$$
 Thanks to the boundedness of $\tilde S^\infty$ and the fact $\lambda_i>0$ for all $i\geq2$, it is easily seen that $a_i=0$ for all $i\ge 2$. Recall that $\lambda_1=0$ and $\phi_1$ is constant. It then follows that $\tilde S^\infty$ is constant. By the third equation of \eqref{SIS_entire1}, we have $\tilde S^\infty= {N}/ {|\Omega|}$.

In light of \eqref{tildeSS}, it is necessary that
\begin{equation*}
 \left |\tilde S^\infty(z_0, 0)-\frac{N}{|\Omega|} \right |>\epsilon_0.
\end{equation*}
This contradicts with $\tilde S^\infty= {N}/ {|\Omega|}$, and \eqref{ine3} is thus proved.

Note that $\mathcal{R}_0>1$ and $1-\mathcal{R}_0$ has the same sign with the principal eigenvalue of problem \eqref{eig}. Then, we can choose $\epsilon_0>0$ small enough so that the principal eigenvalue $\lambda_{\epsilon_0}$ of the following problem is negative:
  \begin{equation}
  \left\{
  \begin{array}{lll}
{   \partial_t \varphi}-d_I\Delta \varphi=\beta(x, t)\left(\frac{N}{|\Omega|} - \epsilon_0 \right)^q\varphi-\gamma(x, t)\varphi+\lambda\varphi, &x\in\Omega,\,t>0,  \medskip\\
 {   \partial_\nu \varphi}=0, &x\in\partial\Omega,\,t>0,  \medskip \\
   \dis \varphi(x,\omega)= \varphi(x, 0), &x\in\Omega.
   \end{array}
   \right.
 \end{equation}
 Let $\varphi_{\epsilon_0}>0$ be a corresponding eigenvector of $\lambda_{\epsilon_0}$.  By \eqref{ine3}, there exists $N_0>0$ such that
$$
\beta [S^{N_0}(x, t)]^q-\gamma \ge \beta \left(\frac{N}{|\Omega|} - \epsilon_0 \right)^q-\gamma, \ \ \forall (x, t)\in\bar\Omega\times [0, \infty).
$$
Therefore, $I^{N_0}(x, t)$ satisfies
 \begin{equation*}
 \left\{ \begin{array}{llll}
 \medskip
{   \partial_t  I^{N_0}}-d_I\Delta I^{N_0} \ge  I^{N_0}\left(\beta \left(\frac{N}{|\Omega|} - \epsilon_0 \right)^q-\gamma\right),&x\in\Omega,\,t\in\mathbb{R},  \\
{   \partial_\nu I^{N_0}}=0,&x\in\partial\Omega,\,t\in\mathbb{R}, \medskip\\
I^{N_0}(x, 0)\ge \delta \varphi_{\epsilon_0}(x, 0), &x\in\Omega,
  \end{array}\right.
 \end{equation*}
where $\delta>0$ is small. Then $I^{N_0}$ is a supersolution of the following problem:
 \begin{equation}\label{IN1}
 \left\{ \begin{array}{llll}
 \medskip
{   \partial_t u}-d_I\Delta u =  u\left(\beta \left(\frac{N}{|\Omega|} - \epsilon_0 \right)^q-\gamma\right),&x\in\Omega,\,t\in\mathbb{R},\\
{   \partial_\nu u}=0,&x\in\partial\Omega,\,t\in\mathbb{R},  \medskip\\
u(x, 0)= \delta \varphi_{\epsilon_0}(x, 0), &x\in\Omega.
  \end{array}\right.
 \end{equation}
It is not hard to check that $u=\delta e^{-\lambda_{\epsilon_0}t}\varphi_{\epsilon_0}(x, t)$ is the unique solution of \eqref{IN1}. By the comparison principle and $\lambda_{\epsilon_0}<0$, we have
$$
I^{N_0}(x, t)\ge \delta e^{-\lambda_{\epsilon_0}t}\varphi_{\epsilon_0}(x, t) \rightarrow \infty,\ \ \text{as } t\rightarrow\infty,
$$
which contradicts the boundedness of $I^{N_0}$. This completes the proof.
\end{proof}

\vskip8pt
Now we are ready to prove the uniform persistence of the solutions and the existence
of a positive $\omega$-periodic  solution of \eqref{SIS}-\eqref{initialC}. The definitions and results from  dynamical systems theory used below  can be found in the appendix. Since the semiflow induced by the solutions of \eqref{SIS}-\eqref{initialC} is not defined on a complete metric space, small modifications are necessary.
\\
\begin{proof}[Proof of Theorem \ref{th-a4}] Let $X^+$  be the positive cone of $X$.
Let $A$ be the complete metric space given by
$$
A=\left\{(u, v)\in X^+\times X^+: \ \int_\Omega (u+v) dx=N\right\}
$$
with distance induced by the norm of $X\times X$.

Let $\rho: A\rightarrow [0, \infty)$ be given by
$$
\rho((u, v))=\min\{\min_{x\in\bar\Omega} u(x),\ \min_{x\in\bar\Omega} v(x) \}, \ (u, v)\in A.
$$
We also set $A=A_0\cup \partial A_0$, where
$$
A_0=\{(u, v)\in A:\ u(x),\ v(x)> 0 \text{ for all } x\in\bar\Omega\}
$$
and
$$
\partial A_0=\{(u, v)\in A:\ u(x)=0 \ \text{or } v(x)= 0 \text{ for some } x\in\bar\Omega\},
$$
with $A_0=\rho^{-1}((0, \infty))$ and $\partial A_0=\rho^{-1}(\{0\})$. It is not hard to check that $\partial A_0$ is relatively closed and $A_0$ is relatively open. (Since the nonlinear term $S^qI^p$ may prevent the solution of \eqref{SIS}-\eqref{initialC} from being unique if $(S_0, I_0)\in \partial A_0$, we work in $A_0$). 

Let $T(t): A_0\rightarrow A_0$ be the $\omega$-periodic semiflow induced by the solutions of \eqref{SIS}-\eqref{initialC}, i.e., $T(t)(S_0, I_0)=(S(\cdot, t), I(\cdot, t))$, $t\ge 0$, which satisfies $T(t+\omega)=T(t)$ for all $t\ge 0$. Let $\mathcal{P}=T(\omega): A_0\rightarrow A_0$ be the Poinc\'are map of \eqref{SIS}.

By Theorem \ref{th-a}, $\mathcal{P}: A_0\rightarrow A_0$ is point dissipative.
Moreover, $\mathcal{P}$ is compact because of the dissipation terms in \eqref{SIS} (One can actually see this from the semigroup computation in the proof of Theorem \ref{Theorem_bound0}: Firstly, the uniform $L^\infty$-bound in Theorem \ref{th-a} (i.e., $M_\infty$) depends only on the $L^\infty$-norm of the initial data, and so $\mathcal{P}$ maps bounded sets into bounded sets in $A$; then, in \eqref{semig}, one can see that the constant $C$ depends only on the  $L^\infty$-norm of the initial data; finally, by the compactness of the embedding $X_\alpha\subset C(\bar\Omega)$, $\mathcal{P}$ maps bounded sets into precompact  sets in $A$).

Furthermore, due to Lemmas \ref{lemma_weak}-\ref{lemma_weak2}, $\mathcal{P}$ is weakly $\rho$-uniformly persistent, i.e., there exists $\epsilon_0'>0$ such that
$$
\limsup_{n\rightarrow\infty} \rho(\mathcal{P}^n(S_0, I_0))>\epsilon_0', \ \ \forall (S_0, I_0)\in A_0.
$$
Applying Propositions \ref{thm_comp}-\ref{thm_persistent}, $\mathcal{P}$ is $\rho$-uniformly persistent, i.e., there exists $\epsilon''_0>0$ such that
$$
\liminf_{n\rightarrow\infty} \rho(\mathcal{P}^n(S_0, I_0))>\epsilon''_0, \ \ \forall (S_0, I_0)\in A_0,
$$
which implies \eqref{persist}.

Finally, we can apply Proposition \ref{thm_fix} to prove the existence of a positive periodic solution.  It suffices to show that $\mathcal{P}$ maps $\rho$-strongly bounded subsets of $A_0$ to   $\rho$-strongly bounded subsets of $A_0$. Let $B$ be a $\rho$-strongly bounded subset of $A_0$, i.e., $B$ is bounded and there exists $\epsilon >0$ such that
$$
\min_{x\in\bar\Omega} S_0(x), \ \min_{x\in\bar\Omega} I_0(x) \ge \epsilon, \ \ \forall (S_0, I_0)\in B.
$$
It is not hard to see that $\mathcal{P}(B)$ is bounded by Theorem \ref{th-a}.   Let $(S_0, I_0)\in B$ and $(S(x, t), I(x, t))$ be the solution of \eqref{SIS}-\eqref{initialC}.
Then $\mathcal{P}(S_0, I_0)=T(\omega)(S_0, I_0)=(S(\cdot, \omega), I(\cdot, \omega))$.  By the second equation of \eqref{SIS}, we have
$$
{   \partial_t I}-d_I\Delta I>-\sigma^0 I, \ \ x\in\Omega, t\ge 0.
$$
Using the comparison principle, we have
\begin{equation}\label{lbound1}
I(x, t) \ge e^{-\sigma^0 t} I_0(x)\ge \epsilon e^{-\sigma^0 t}, \ \ x\in \bar\Omega.
\end{equation}
From the first equation of \eqref{SIS} it follows that
$$
{   \partial_t S}-d_S\Delta S\ge -\sigma^0 S^q C^p+ \gamma_m(t) \epsilon e^{-\sigma^0 t}, \  \  x\in\Omega,\,t\in (0, \omega],\\
$$
where
$
\gamma_m(t)= \min_{x\in\bar\Omega} \gamma(x, t)
$
and
$C$ is chosen such that $I(x, t)\le C$ for all $x\in\bar\Omega$ and $t\in [0, \omega]$ uniformly for $(S_0, I_0)\in B$. Therefore, the comparison principle infers that
\begin{equation}\label{lbound2}
S(x, t)\ge w(t), \ \ \forall x\in\bar\Omega, t\in [0, \omega],
\end{equation}
where $w(t)$ is the solution of the following ordinary differential equation:
 \begin{equation*}
 \left \{\begin{array}{lll}
 \medskip
 {   w'}= -\sigma^0  C^p w^q + \epsilon \gamma_m(t)  e^{-\sigma^0 t},\ t\in (0, \omega],\\
 \dis w(0)=\epsilon.
 \end{array}\right.
 \end{equation*}
Combining \eqref{lbound1}-\eqref{lbound2}, we have $S(x, \omega)\ge w(\omega)$ and $I(x, \omega)\ge \epsilon e^{-\sigma^0 \omega}$ for all $x\in \bar\Omega$. This indicates that $\mathcal{P}(B)$ is a $\rho$-strongly bounded subset of $A_0$.  Therefore, by Proposition \ref{thm_fix}, $\mathcal{P}$ has at least one fixed point in $A_0$, equivalently, \eqref{SIS} has at least one positive $\omega$-periodic solution.
\end{proof}

\begin{Remark}
 \begin{itemize}
 \item[\rm{(i)}] If $p=1, q\ge 1$ and $\mathcal{R}_0>1$, with the aid of Theorem \ref{th-a}, we just need to slightly modify the analysis of \cite[Theorem 3.3]{PZ}  to prove the uniform persistence result (i.e., we do not need to prove the uniform weak persistence first).
     
  \item[\rm{(ii)}]  When $p=1$, the same argument of \cite[Theorem 2.1]{DW} allows one to conclude that the solution of
\eqref{SIS} is bounded by a positive constant which depends on the initial data; however, such estimates are insufficient for us
to obtain Theorem \ref{th-a4}.  
 \end{itemize}     
\end{Remark}

\section{Discussion}

In this section, we first discuss further applications of our analysis used in this paper to some other epidemic models.
Then we interpret the biological implications of our results and conclude the influence of
the parameters and coefficients of the model on the dynamical behavior of disease transmissions.

\subsection{Other related epidemic models} We want to mention that the mathematical techniques developed in the previous sections can  be carried over to other types of infection incidence functions, including the following ones:
 \begin{itemize}
 \item[\rm{(i)}] The binomial incidence function $S\ln(1+k I)$ with $k\ge 0$ (\cite{Ba,CG,McCallum});

 \item[\rm{(ii)}]  The incidence function $\frac{S^qI^p}{1+I^\ell}$ with constants $p,q>0,\,\ell\geq0$ (\cite{DV1,HV,LHL,LR});

 \item[\rm{(iii)}]  The media effect incidence function $\frac{S^qI^p}{1+I^\ell}e^{-I}$ with constants $p,q>0,\,\ell\geq0$ (\cite{CSZ,CTZ}).

 \end{itemize}

More precisely, if $S^qI^p$ is replaced by $S\ln(1+k I)$ in  \eqref{density}, we can
prove the same results as in Theorems \ref{th-a}, \ref{result1} and \ref{th-a4} for $p=1$;
if $S^qI^p$ is replaced by $\frac{S^qI^p}{1+I^\ell}$ or $\frac{S^qI^p}{1+I^\ell}e^{-I}$ in \eqref{density}, we can
prove the same results as in Theorems \ref{th-a}, \ref{result1} and \ref{th-a4}.

\subsection{Conclusion} Usually, a priori  $L^\infty$-bounds are the starting point to study the long-time behavior of the solutions of a reaction-diffusion system. In this paper, we establish the $L^\infty$-bounds for \eqref{density1} first. Our results  include a range of parameters which are not covered by \cite{Fitz,Morgan2, Pierre}, and the technique developed in the proof of Theorem \ref{Theorem_boundaa} under (H2)  may find further applications in other reaction-diffusion systems. We remark that if $p, q\ge 1$ one may apply the results in \cite{Fitz,Morgan2} to obtain the $L^\infty$-bounds in Theorem \ref{th-a} for \eqref{density} but not for \eqref{density1} in general; if $p<1$ or $q<1$, the positivity of the solutions is required to ensure the unique extension of the solutions, and therefore the analysis is more subtle.

Based on the $L^\infty$-bounds, we investigate the long-time behavior of the solutions of  \eqref{density} in the following two cases:
 \begin{equation}\nonumber
 \begin{array}{l}
 \mbox{(i)\ \, $\mu>0$ \ \ \ and\ \ \ (ii)\ \, $\mu=0$}.
 \end{array}
 \end{equation}
The global dynamics of \eqref{density} are very different for these two cases.

The case (i) states that there are individuals who die from the disease and thus the total population number of susceptible and infected hosts is decreasing in time $t$ as seen from \eqref{control-mass-d}.  In this case, Theorem \ref{result1} shows that  $I(x, t)$ converges to zero, which means that the infection will become extinct in the long run. However, the long-time behavior of $S(x, t)$ depends on the parameter $p$:
 \begin{itemize}
 \item  When $p\geq1$, the density of susceptible individuals converges to a positive constant, which means that the susceptible population will distribute homogeneously in the whole habitat eventually;

\item When $0<p<1$,  the density of   susceptible individuals converges to zero which means that  the disease is fatal enough so that it drives its hosts to extinction.
 \end{itemize}

The case (ii) biologically means that the disease is not fatal. From \eqref{control-mass-d} it follows that the total population of susceptible and infected remains constant all the time. According to Theorem \ref{th-a4} and Remark \ref{re-p1},
we see that
 \begin{itemize}
 \item When $0<p<1$ or $p=1$ with the basic production number $\mathcal{R}_0>1$, the susceptible and infected populations uniformly persist in the whole habitat in the long run;

 \item When $p=1$ and $\mathcal{R}_0\leq1$, the disease dies out and the susceptible population persists.

 \item When $p>1$, Remark \ref{re-p1}(ii) indicates that the dynamics of \eqref{SIS}-\eqref{initialC} will  depend on the initial data and the  susceptible population will not be driven to extinction.
 \end{itemize}

The above discussion shows that in an SI/SIS system with nonlinear incidence function $S^qI^p\,(p,\,q>0)$, the power $p$ and the disease-induced death rate $\mu$ are vital factors in determining the global dynamics; in particular, if the disease-induced death rate $\mu$ is taken into account, the fatal disease causes its hosts to extinction if and only if $0<p<1$. 
  
It is further worth mentioning that in a very recent  work \cite{FCGT}, Farrell \emph{et al.} studied
a class of SI ODE systems in which the incidence function $S^qI^p$ is one of the main focuses. In particular, they explored the roles of the exponents $p,q$ on the extinction of the susceptible population. One may refer to
\cite{FCGT} and the references therein for more  experimental observations and theoretical analysis regarding the phenomenon of host extinction caused by  infectious diseases.

\section{Acknowledgment}
The authors would like to thank the referee and editor for their comments, which lead to improvements of the presentation of the paper.

\section{Appendix}
\subsection{An SI ODE model}
If $\gamma=0$ and $0<q<1$, the solution of \eqref{density} may fail to remain positive (this is the reason we need assumption {\color{red}(A2)}-(ii)).
To see this,  we consider the following SI epidemic  model:
 \begin{equation}
 \left\{ \begin{array}{llll}
 {   S'}=-\beta S^qI^p,&\,t>0, \medskip\\
{   I'}=\beta S^qI^p-\mu I,&\,t>0, \medskip\\
 \dis S(0)=S_0>0,\,I(0)= I_0>0,
 \end{array}\right.
 \label{density-ODE}
 \end{equation}
where the parameters $\beta,\,\mu, \, p$ and $q$ are positive numbers. Denote by $(S,I)$
the unique solution of \eqref{density-ODE}. Clearly,
$(S,\,I)$ exists for all time $t>0$, and $S(t)\geq0,\,I(t)>0$ for all $t>0$.

From the proof of Theorem \ref{result1}, we have already known:
(1) If $0<p<1$, $(S,I)\to(0,0)$ as $t\to\infty$; (2) If $p, q\ge 1$, $(S,I)\to(S_*,0)$ as $t\to\infty$, where $S_*>0$. In addition, we can state the following result.

\begin{Proposition}\label{ODE-result} Suppose $q\in (0, 1)$. Let $(S,I)$ be
the unique solution of \eqref{density-ODE}. The following assertions hold:

 \begin{itemize}

 \item[\rm{(i)}] If $\mu pS^{1-q}(0)=(1-q)\beta I^p(0)$, then
  $(S,I)\to(0,0)$ as $t\to\infty$, and if $\mu pS^{1-q}(0)$ $<(1-q)\beta I^p(0)$, then
  $I\to0$ as $t\to\infty$ and $S(t)=0,\, \forall t\geq T_*$, for some $0<T_*<\infty$.

 \item[\rm{(ii)}] If $p\geq1$ and $pS_0^{1-q}[\mu-\beta S^{q}(0)I^{p-1}(0)]>(1-q)\beta I^p(0)$, then
  $(S,I)\to(S_*,0)$ as $t\to\infty$, for some positive constant $S_*$.

 \end{itemize}

 \end{Proposition}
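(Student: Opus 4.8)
The plan is to treat all three regimes through one pair of auxiliary quantities and then to separate cases by the sign of a single initial value. First I would record the structural facts valid for every solution with $S_0,I_0>0$: since $\dot S=-\beta S^qI^p\le 0$, the component $S$ is nonincreasing and converges to some $S_*\ge 0$; adding the two equations gives $\frac{d}{dt}(S+I)=-\mu I\le 0$, so $S+I$ is nonincreasing and bounded below, whence $\int_0^\infty \mu I\,dt\le S_0+I_0<\infty$. Because $S,I$ remain bounded, $\dot I$ is bounded, so $I$ is uniformly continuous; combined with the finite integral this forces $I(t)\to 0$ by a standard Barbalat argument. Thus in every case $I\to 0$ and $S\to S_*$, and only the value of $S_*$, together with whether $S$ vanishes in finite time, remains to be decided.

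The engine of the argument is the identity $\frac{d}{dt}S^{1-q}=-(1-q)\beta I^p$ (valid while $S>0$) together with the quantity $V:=\mu p\,S^{1-q}-(1-q)\beta I^p$, whose initial value is exactly the expression compared in the hypotheses. A direct computation gives $\dot V=-(1-q)p\beta^2 S^qI^{2p-1}\le 0$, so $V$ is nonincreasing on $\{S>0\}$. For part (i) with $V(0)=0$ I would use $V\le 0$, i.e. $I^p\ge \frac{\mu p}{(1-q)\beta}S^{1-q}$, to obtain $\frac{d}{dt}S^{1-q}\le -\mu p\,S^{1-q}$; this yields $S^{1-q}(t)\le S_0^{1-q}e^{-\mu p t}\to 0$, so $S\to 0$ and, with $I\to 0$, $(S,I)\to(0,0)$. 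For part (i) with $V(0)<0$ I would instead use $V\le V(0)<0$, which gives $\frac{d}{dt}S^{1-q}=V(t)-\mu p S^{1-q}\le V(0)<0$; hence $S^{1-q}$ decreases at least linearly and reaches $0$ at some finite $T_*\le S_0^{1-q}/(-V(0))$, after which $S\equiv 0$ and $\dot I=-\mu I$ drives $I\to 0$.

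Part (ii) is the substantive case, and it is where $V$ alone is insufficient, since the hypothesis only yields $V(0)>0$ while $V$ may later turn negative. I would first observe that the hypothesis forces $\dot I(0)=I_0(\beta S_0^qI_0^{p-1}-\mu)<0$, and then show that $I$ is strictly decreasing for all time: writing $\dot I=I(\beta S^qI^{p-1}-\mu)$ and using $p\ge 1$, the factor $\beta S^qI^{p-1}$ is a product of nonincreasing nonnegative functions along a trajectory on which both $S$ and $I$ decrease, so it cannot recross $\mu$ from below. Consequently $I\le I_0$. Since $S$ is strictly decreasing while positive, I would use $S$ as the independent variable; the trajectory obeys $\frac{d(I^p)}{dS}=-pI^{p-1}+\frac{p\mu}{\beta}S^{-q}$, and the bound $I^{p-1}\le I_0^{p-1}$ converts this into the differential inequality $\frac{d(I^p)}{dS}\ge -pI_0^{p-1}+\frac{p\mu}{\beta}S^{-q}$.

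Integrating this from $S_0$ down to $S$ produces the pointwise estimate $I^p(S)\le r(S):=I_0^p+pI_0^{p-1}(S_0-S)-\frac{p\mu}{\beta(1-q)}(S_0^{1-q}-S^{1-q})$ along the trajectory. To conclude I would argue by contradiction: if $S_*=0$, then $I^p(S)>0$ for every $S\in(0,S_0]$, whereas the hypothesis of (ii) — after dividing by $\beta(1-q)$ and using $1/(1-q)\ge 1$ to dominate the middle term — forces $r(0^+)=I_0^p+pI_0^{p-1}S_0-\frac{p\mu}{\beta(1-q)}S_0^{1-q}<0$, so $r(S)<0$ for all small $S>0$, contradicting $0<I^p(S)\le r(S)$. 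Hence $S_*>0$ and $(S,I)\to(S_*,0)$. The main obstacle is precisely this last case: one must pass from the \emph{non-preserved} sign of $V$ to a genuine lower bound on $S_*$, and the cleanest route I see is the phase-plane comparison above, which in turn requires first establishing the monotonicity of $I$. I also expect the minor technical point that the stated threshold in (ii) is slightly stronger than the comparison threshold (the discrepancy being exactly the factor $1/(1-q)$ on the middle term) to require a short line of explanation.
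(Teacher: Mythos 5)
Your proof is correct, and it reaches the conclusion by a genuinely different route from the paper's. The paper's strategy is uniform across both parts: it first shows $I\to 0$ and traps $I$ between explicit exponentials ($I(t)\geq I_0e^{-\mu t}$ always, and $I(t)\leq I_0e^{-[\mu-\beta S_0^qI_0^{p-1}]t}$ in case (ii) after establishing that $I$ is decreasing), then substitutes these bounds into the $S$-equation and solves the resulting Bernoulli-type comparison problems $\dot Z=-\beta I_0^p e^{\mp\delta t}Z^q$ in closed form; the sign conditions in the statement are exactly the conditions under which the explicit formula for $Z^{1-q}$ does or does not reach zero. You instead replace the time-dependent comparison in part (i) by the monotone quantity $V=\mu p S^{1-q}-(1-q)\beta I^p$ (your computation $\dot V=-(1-q)p\beta^2S^qI^{2p-1}\leq 0$ is correct), which has the advantage of explaining why the threshold $\mu pS_0^{1-q}$ versus $(1-q)\beta I_0^p$ is the natural dividing line: it is the level set $V=0$. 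In part (ii) you eliminate time altogether and work in the $(S,I^p)$ phase plane via $\frac{d(I^p)}{dS}=-pI^{p-1}+\frac{p\mu}{\beta}S^{-q}$, producing the barrier $r(S)$; this is sound, and as you observe it actually establishes the conclusion under the slightly weaker hypothesis $p\mu S_0^{1-q}>(1-q)\beta I_0^p+(1-q)p\beta S_0I_0^{p-1}$, the stated hypothesis differing only by the factor $1-q$ on the last term. Two minor points: your monotonicity claim for $I$ in part (ii) should be phrased as a first-crossing-time argument (let $T$ be the first time $\beta S^qI^{p-1}=\mu$; on $[0,T)$ both factors are nonincreasing, so $T=\infty$), since as written it assumes what it proves --- though the paper is equally terse here; and both your $V$-identity and your phase-plane ODE are only valid while $S>0$, so one should note, as the paper implicitly does, that $S$ is nonincreasing and nonnegative and hence stays at zero once it vanishes, which is all that the conclusions require.
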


\begin{proof} As in the proof of Theorem \ref{result1}, one can see that
 $I\to0$ as $t\to\infty$ and $I(t)\geq I(0)e^{-\mu t}$ for all $t\geq0$.
 Furthermore, it is easily observed that $S$ is strictly deceasing on $[0,\infty)$
 and so $\lim_{t\to\infty}S(t)=S_*\geq0$ for some nonnegative number $S_*$.

Using the fact $I(t)\geq I(0)e^{-\mu t}$, $t\geq0$, it follows from the first equation in
\eqref{density-ODE} that
 $$
{   S'}\leq-\beta I^p(0)e^{-p\mu t} S^q,\ \ t>0.
 $$
Hence, $S(t)\leq Z(t)$ for all $t\geq0$, where $Z(t)$ is the unique solution of the problem
 \begin{equation}
 {   Z'}=-\beta I^p(0)e^{-p\mu t} Z^q,\,t>0; \ \ \  Z(0)=S_0>0.
 \label{density-ODE-1}
 \end{equation}
Solving \eqref{density-ODE-1} yields
 $$
 Z^{1-q}(t)=S_0^{1-q}+\frac{(1-q)\beta I^p(0)}{p\mu}[e^{-p\mu t}-1],\ \ t>0.
 $$
This, together with $S(t)\leq Z(t)$ for all $t\geq0$,  implies that $S(t)\to0$ if $S_0^{1-q}-\frac{(1-q)\beta I^p(0)}{p\mu}=0$
and if $S_0^{1-q}-\frac{(1-q)\beta I^p(0)}{p\mu}<0$, then $S(t)=0$ for all $t\geq T_*$, where $T_*\leq T^*$ and $T^*$ is the unique root of
$S_0^{1-q}+\frac{(1-q)\beta I^p(0)}{p\mu}[e^{-p\mu t}-1]=0$.

Next, we verify (ii). Since $\beta S^{q}(0)I^{p-1}(0)<\mu$ and $S$ is decreasing, it easily follows from the equation of $I$ that $I$ is also decreasing on $[0,\infty)$. In particular, we have
$\frac{d I}{d t}\leq[\beta S^{q}(0)I^{p-1}(0)-\mu]I$ for all  $t>0$. This gives $I(t)\leq I(0)e^{-[\mu-\beta S^{q}(0)I^{p-1}(0)]t}$ for all $t>0$. In turn, we get from the equation of $S$ that
 $$
{   S'}\geq -\beta I^p(0)e^{-p[\mu-\beta S^{q}(0)I^{p-1}(0)]t}S^q,\,\ \ \forall t>0.
 $$
Arguing similarly as before, we find that
 $$
 \displaystyle S^q(t)\geq S_0^{1-q}+\frac{(1-q)\beta I^p(0)}{p[\mu-\beta S^{q}(0)I^{p-1}(0)]}
 \left\{e^{-p[\mu-\beta S^{q}(0)I^{p-1}(0)] t}-1\right\},\ \ t>0.
 $$
Therefore,
 $$
 S^q(t)\to S_*^q\geq S_0^{1-q}-\frac{(1-q)\beta I^p(0)}{p[\mu-\beta S^{q}(0)I^{p-1}(0)]}>0,\ \ \mbox{as}\ t\to\infty
 $$
provided that $S_0^{1-q}-\frac{(1-q)\beta I^p(0)}{p[\mu-\beta S^{q}(0)I^{p-1}(0)]}>0$.
\end{proof}

\subsection{An SIS ODE model}

In this subsection, we provide the results for the corresponding autonomous ODE model
of \eqref{density} with $\mu=0$. That is, consider the following SIS epidemic model:
 \begin{equation}
 \left\{ \begin{array}{llll}
 
{   S'}=-\beta S^qI^p+\gamma I,&\,t>0, \medskip\\
 
{   I'}=\beta S^qI^p-\gamma I,&\,t>0, \medskip\\
 \dis S(0)=S_0>0,\,I(0)= I_0>0.
 \end{array}\right.
 \label{sis}
 \end{equation}
Adding up the first two equations of \eqref{sis}, we find that the total population is a constant, i.e.,
\begin{equation}\label{sis2}
N:=S+I=S_0+I_0, \ \ t\ge 0.
\end{equation}

\begin{Proposition}\label{ODE-SIS}
Let $(S, I)$ be the solution of \eqref{sis}-\eqref{sis2}. The following results hold.
\begin{itemize}
\item Suppose that $p>1$.
\begin{itemize}
\item[\rm{(i)}] If $\gamma<\beta N^*$, where
$$
N^*:= \frac{q^q(p-1)^{p-1}}{(p-1+q)^{p-1+q}} N^{p-1+q},
$$
 then there are two positive steady states, denoted by $(S_*, I_*)$ and $(S^*, I^*)$ with $S_*<S^*$. Moreover, if $S_0< S^*$, then $(S(t), I(t))\rightarrow (S_*, I_*)$ as $t\rightarrow \infty$; if $S_0> S^*$, then $(S(t), I(t))\rightarrow (N, 0)$ as $t\rightarrow \infty$, and if $S_0=S^*$, then $(S(t), I(t))\rightarrow (S^*, I^*)$ as $t\rightarrow \infty$.

\item[\rm{(ii)}] If $\gamma=\beta N^*$,  then there exists a unique positive steady state denoted by $(S_*, I_*)$. Moreover, if $S_0\le S_*$, then $(S(t), I(t))\rightarrow (S_*, I_*)$ as $t\rightarrow \infty$; if $S_0> S_*$, then $(S(t), I(t))\rightarrow (N, 0)$ as $t\rightarrow \infty$.

\item[\rm{(iii)}] If $\gamma>\beta N^*$,  then there is no positive steady state, and $(S(t), I(t))\rightarrow (N, 0)$ as $t\rightarrow \infty$.
\end{itemize}
\item Suppose that $p=1$.
\begin{itemize}
\item[\rm{(i)}] If $\gamma<\beta N^q$, then there exists a unique positive steady state denoted by $(S_*, I_*)$, where $S_*=(\frac{\gamma}{\beta})^\frac{1}{q}$. Moreover, $(S(t), I(t))\rightarrow (S_*, I_*)$ as $t\rightarrow \infty$.

\item[\rm{(ii)}] If $\gamma>\beta N^q$,  then there is no positive steady state, and $(S(t), I(t))\rightarrow (N, 0)$ as $t\rightarrow \infty$.
\end{itemize}

\item Suppose that $p<1$. Then  there exists  a unique positive steady state denoted by $(S_*, I_*)$, and $(S(t), I(t))\rightarrow (S_*, I_*)$ as $t\rightarrow \infty$.

\end{itemize}
\end{Proposition}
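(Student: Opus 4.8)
The plan is to exploit the conservation law \eqref{sis2} to collapse the planar system \eqref{sis}--\eqref{sis2} to a single scalar autonomous ODE and then read off every assertion from its phase line. Writing $S=N-I$, the second equation becomes
\[
\frac{dI}{dt}=f(I):=\beta(N-I)^qI^p-\gamma I=I\big[g(I)-\gamma\big],\qquad g(I):=\beta(N-I)^qI^{p-1},
\]
for $I\in[0,N]$. First I would record that $(0,N)$ is forward invariant: at $I=N$ (i.e. $S=0$) one has $f(N)=-\gamma N<0$, while $I=0$ is an equilibrium, so any trajectory starting from $I(0)=I_0\in(0,N)$ (guaranteed by $S_0,I_0>0$) stays in $(0,N)$ for all $t>0$. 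Since $f$ is bounded there the solution is global, and the positive steady states of \eqref{sis} are exactly the zeros of $g(I)-\gamma$ in $(0,N)$.

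Next I would pin down the shape of $g$ via its logarithmic derivative
\[
\frac{g'(I)}{g(I)}=\frac{p-1}{I}-\frac{q}{N-I}.
\]
When $p<1$ both terms are negative, so $g$ decreases strictly from $g(0^+)=+\infty$ to $g(N)=0$; when $p=1$, $g(I)=\beta(N-I)^q$ decreases strictly from $\beta N^q$ to $0$; when $p>1$, $g(0^+)=g(N)=0$ and $g$ is unimodal, with its unique interior maximum at $I=\frac{(p-1)N}{p-1+q}$, where $N-I=\frac{qN}{p-1+q}$. Substituting these values yields $g_{\max}=\beta\,\frac{q^q(p-1)^{p-1}}{(p-1+q)^{p-1+q}}N^{p-1+q}=\beta N^*$, precisely the threshold constant in the statement; this identification of the peak of $g$ with $\beta N^*$ is the computational heart of the argument.

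With the monotonicity/unimodality of $g$ in hand, the number of roots of $g(I)=\gamma$ and the sign of $f=I(g-\gamma)$ on each subinterval are immediate, and convergence follows from the standard fact that a bounded, eventually monotone solution of a scalar autonomous ODE tends to an equilibrium. Concretely: for $p<1$, and for $p=1$ with $\gamma<\beta N^q$, there is a unique root $I_*$ with $f>0$ to its left and $f<0$ to its right, so $I_*$ is globally attracting and $(S,I)\to(S_*,I_*)$; for $p=1$ with $\gamma>\beta N^q$ one has $g<\gamma$ throughout, so $f<0$ and $(S,I)\to(N,0)$. For $p>1$ I would split on $\gamma$ versus $\beta N^*=g_{\max}$: if $\gamma>\beta N^*$ then $f<0$ on $(0,N)$ and $I\to0$; if $\gamma=\beta N^*$ the tangency gives one root $I_*$ (the maximizer) with $f\le0$ on both sides, so trajectories with $I_0\ge I_*$ (i.e. $S_0\le S_*$) decrease to $I_*$ while those with $I_0<I_*$ fall to $0$; if $\gamma<\beta N^*$ there are two roots $I^*<I_*$, the smaller repelling and the larger attracting, so $I_0>I^*$ (i.e. $S_0<S^*$) gives $I\to I_*$, $I_0<I^*$ (i.e. $S_0>S^*$) gives $I\to0$, and $I_0=I^*$ stays at $(S^*,I^*)$. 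Translating back through $S=N-I$, so that larger $I$ means smaller $S$, reproduces the labelling $S_*<S^*$ and each claimed limit.

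The main obstacle is twofold. The first is the exact evaluation of the peak of $g$ and its match with $N^*$, handled above by the logarithmic derivative. The second is that for $p<1$ or $q<1$ the field $f$ fails to be Lipschitz near $I=0$ or $I=N$, so uniqueness is not automatic there; this is harmless, because forward invariance of $(0,N)$ together with the constant sign of $f$ on each subinterval keeps every trajectory strictly monotone in the interior, where $f$ is smooth, and monotone bounded scalar solutions converge irrespective of boundary regularity. Hence no appeal to global Lipschitz continuity is required.
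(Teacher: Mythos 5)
Your proposal is correct and follows essentially the same route as the paper: the paper likewise uses the conservation law $S+I=N$ to collapse \eqref{sis}--\eqref{sis2} to a single scalar autonomous ODE (for $S$ rather than $I$) and then invokes a "standard phase plane analysis," omitting the details. You simply carry out those omitted details explicitly — in particular the identification of $\beta N^*$ as the maximum of $g$ and the handling of the non-Lipschitz boundary points — which is a faithful completion of the same argument.
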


\begin{proof}
Since $S(t)+I(t)=N$ for all $t\ge0$, it suffices to consider
\begin{equation}
 \left\{
 \begin{array}{llll}
 {    S'}=-\beta S^q (N-S)^p+\gamma (N-S),&\,t>0, \medskip\\
\dis S(0)=S_0\in (0, N).
 \end{array}
 \right.
 \label{re}
 \end{equation}
 A standard phase plane analysis of \eqref{re} yields the desired results, and we omit the details here.
\end{proof}

\subsection{Some definitions and abstract results on dynamical systems}
We collect the definitions and results  on dynamical systems used in the current paper. These results can be found in  \cite{magal2005global, Zhaobook}, however small modifications are needed since the map is not defined in a complete set in our applications.

Let $(X, d)$ be a complete {\color{red} metric} space, and let $\rho: X\rightarrow [0, \infty)$ be a continuous function. Define
$$
X_0:=\{x\in X: \rho(x)>0\} \ \ \text{and}\ \ \partial X_0:=\{x\in X: \rho(x)=0\}.
$$
For the maps defined on $X$, we adopt all the definitions and terminology in \cite{magal2005global, Zhaobook}.  For \eqref{density}, when $0<p<1$ or $0<q<1$, the solution may fail to be unique if the initial data are not strictly positive. Taking this into consideration, we consider a continuous map $T: X_0\rightarrow X_0$.

For any two sets $A, B\subset X$ and $x\in A$, we let
$$
d(x, A)=\inf_{y\in A}d(x, y)\ \ \text{ and } \ \ \delta(B, A)=\sup_{x\in B} d(x, A).
$$
We say that $A\subset X$ \emph{attracts} $B\subset X_0$ for $T$ if $\lim_{n\rightarrow\infty}\delta(T^n(B), A)=0$.
 \begin{definition}
A continuous map $T: X_0\rightarrow X_0$ is said to be \emph{compact} if for any bounded set $B\subset X$, $T(B\cap X_0)$ is precompact  in $X$; $T$ is \emph{point dissipative} if there is a bounded set $B\subset X$ such that $B$ attracts each point in $X_0$; T is \emph{asymptotically smooth} if for any closed bounded set $B\subset X$ with $T(B\cap X_0)\subset B\cap X_0$, there is a compact set $J\subset B$ such that $J$ attracts $B\cap X_0$.
\end{definition}

The following result is a variant of \cite[Theorem 2.6 (a)]{magal2005global}.
\begin{Proposition}\label{thm_comp}
Let $T: X_0\rightarrow X_0$ be a continuous map. Suppose that $T$ is point dissipative and asymptotically smooth. Then there is a compact set $M\subset X$, which attracts each point in $X_0$ for $T$.
\end{Proposition}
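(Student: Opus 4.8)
The plan is to reduce the statement to a \emph{single} application of the asymptotic smoothness hypothesis, by first manufacturing one bounded, closed, positively invariant subset of $X_0$ that every orbit eventually enters. Point dissipativity supplies a bounded set $B_0\subset X$ that attracts each point of $X_0$; I would enlarge it to its open $\epsilon_0$-neighborhood $B=\{z\in X:d(z,B_0)<\epsilon_0\}$, which is still bounded and still attracts each point, and record the crucial consequence that for every $x\in X_0$ there is an $N(x)$ with $T^n x\in B$ for all $n\ge N(x)$ (here one uses that $d(T^n x,B_0)\to 0$ forces eventual membership in the \emph{open} neighborhood $B$, not merely asymptotic closeness to $B_0$).

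The key object is then
\[
B_* := \{\,x\in X_0 : T^n x\in B \text{ for all } n\ge 0\,\}.
\]
I would verify three things. First, $B_*\subseteq B\cap X_0$, so $B_*$ is bounded and $\overline{B_*}\subseteq\overline{B}$. Second, $B_*$ is positively invariant: if $x\in B_*$ then $T^m(Tx)=T^{m+1}x\in B$ for all $m\ge 0$, so $Tx\in B_*$; passing to the closure and using continuity of $T$ together with $T(X_0)\subseteq X_0$ gives $T(\overline{B_*}\cap X_0)\subseteq\overline{B_*}\cap X_0$, so $\overline{B_*}$ is a closed, bounded, positively invariant set eligible for the asymptotic smoothness hypothesis. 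Third, and this is where point dissipativity does its real work, every orbit enters $B_*$ in finite time: for $x\in X_0$ the point $y=T^{N(x)}x$ satisfies $T^m y=T^{N(x)+m}x\in B$ for all $m\ge 0$, hence $y\in B_*$.

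With $\overline{B_*}$ in hand, asymptotic smoothness yields a compact set $M:=J\subseteq\overline{B_*}$ attracting $\overline{B_*}\cap X_0$, and in particular attracting the subset $B_*$, i.e. $\delta(T^m B_*,M)\to 0$. I would then conclude that this single $M$ attracts each point: given $x\in X_0$, for every $m$ one has $d(T^{N(x)+m}x,M)=d(T^m y,M)\le\delta(T^m B_*,M)\to 0$, so $d(T^k x,M)\to 0$ as $k\to\infty$, which is exactly the assertion.

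The hard part will be the construction and verification of $B_*$ rather than the concluding limit argument. The two delicate points are (i) upgrading the ``attracts each point'' form of point dissipativity into the stronger statement that each orbit is \emph{eventually contained} in the fixed open bounded set $B$, and (ii) carrying out every invariance and closure operation inside the relatively open, possibly non-complete set $X_0$, using that $T$ maps $X_0$ into $X_0$ while the ambient space $X$ is complete, so that the compact set furnished by asymptotic smoothness genuinely lives in $X$. Since only attraction of \emph{points} is required, I expect to avoid the usual hypothesis that forward orbits of bounded sets be bounded, which is what would be needed for the stronger conclusion that $M$ attracts bounded or compact sets.
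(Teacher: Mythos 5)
Your proposal is correct and follows essentially the same route as the paper: your set $B_*$ is exactly the paper's $J(B)=\{y\in B\cap X_0: T^n(y)\in B\cap X_0 \ \forall n\ge 0\}$, and both arguments apply asymptotic smoothness to its closure and then use that every orbit enters this set in finite time. Your preliminary enlargement of the dissipating set to an open neighborhood is a small (and welcome) refinement justifying the step from ``$B_0$ attracts each point'' to ``each orbit eventually lies in $B$,'' which the paper asserts without comment, but it does not change the structure of the proof.
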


\begin{proof} Since $T$ is point dissipative, there exists a bounded set $B\subset X$ such that for any $x\in X_0$, there exists $N=N(x)$, $T^n(x)\in B\cap X_0$ for all $n\ge N$. Let $J(B)$ be defined by
$$
J(B)=\{y\in B\cap X_0: T^n(y)\in B\cap X_0, \ \text{for all} \ n\ge 0\}.
$$
Clearly, $J(B)$ is not empty. Indeed, for any $x\in X_0$, we have $T^n(x)\in J(B)$ for all $n\ge N$.
Since $T$ is continuous, $\overline{J(B)}\cap X_0=J(B)$. To see this, we first note $J(B)\subset \overline{J(B)}\cap X_0$ as $J(B)\subset X_0$. For any $x\in  \overline{J(B)}\cap X_0$, there exists $\{x_k\}\subset J(B)$ such that $x_k\rightarrow x$. For each $k\ge 0$, since $x_k\in J(B)$, $T^n(x_k)\in B\cap X_0$ for all $n\ge 0$. By the continuity of $T$, $T^n(x)\in B\cap X_0$ for all $n\ge 0$.  Therefore, $x\in J(B)$ and $\overline{J(B)}\cap X_0\subset J(B)$.

Since $\overline{J(B)}$ is bounded with $T(\overline{J(B)}\cap X_0)=T(J(B))\subset J(B)$ and $T$ is asymptotically smooth, there exists a compact set $M\subset X$ such that $M$ attracts $J(B)$. It is not hard to check that $M$ attracts each point in $X_0$.
\end{proof}

\begin{definition}
Let $T: X_0\rightarrow X_0$ be a continuous map. $T$ is said to be \emph{$\rho$-uniformly persistent} if there exists $\epsilon>0$ such that $\liminf_{n\rightarrow\infty} \rho(T^n(x))\ge \epsilon$ for all $x\in X_0$; $T$ is \emph{weakly $\rho$-uniformly persistent} if there exists $\epsilon>0$ such that $\limsup_{n\rightarrow\infty} \rho(T^n(x))\ge \epsilon$ for all $x\in X_0$.
\end{definition}

The proof of the following result is exactly the same as \cite[Proposition 3.2]{magal2005global}.
\begin{Proposition}\label{thm_persistent}
Let $T: X_0\rightarrow X_0$ be a continuous map. Suppose that there exists a compact set $M\subset X$ which attracts each point in $X_0$ for $T$. Then if $T$ is weakly $\rho$-uniformly  persistent, it is $\rho$-uniformly  persistent.
\end{Proposition}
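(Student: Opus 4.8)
The plan is to argue by contradiction and to reproduce, with the only adjustment being that $T$ is defined merely on $X_0$ rather than on a complete space, the bootstrapping argument of \cite[Proposition 3.2]{magal2005global}. Suppose that $T$ is weakly $\rho$-uniformly persistent with some constant $\epsilon>0$, yet $\rho$-uniform persistence fails. The failure of uniform persistence yields, for each $m\in\mathbb{N}$, a point $x_m\in X_0$ with $\liminf_{n\to\infty}\rho(T^n x_m)<1/m$, while weak persistence simultaneously forces $\limsup_{n\to\infty}\rho(T^n x_m)\ge \epsilon$. Taking $1/m<\epsilon$, each such orbit therefore oscillates infinitely often between a neighborhood $\{\rho<1/m\}$ of $\partial X_0$ and the region $\{\rho\ge \epsilon\}$.

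The next step is to extract a limiting trajectory, and here I would use the compact attracting set $M$ furnished by Proposition \ref{thm_comp}. Since $M$ attracts each point of $X_0$, every forward orbit eventually enters and stays in a fixed bounded neighborhood of $M$; hence the relevant orbit tails all lie in one common precompact set, and suitably re-centered orbit segments possess convergent subsequences (by the parabolic compactness already invoked for the model). Choosing along the orbit of $x_m$ times $k_m$ with $\rho(T^{k_m}x_m)$ arbitrarily small, re-centering at $k_m$, and passing to the limit, one produces a bounded entire trajectory $\{\phi_n\}_{n\in\mathbb{Z}}$ contained in the maximal invariant subset of the closure along which $\rho$ does not exceed $\epsilon$ and which attains the value $\rho=0$ at its base time.

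The crux of the matter, and the step I expect to be the main obstacle, is to control where this limiting trajectory lives and to convert it into a genuine contradiction with weak persistence. Because $T$ acts only on $X_0$ and $\rho$ vanishes on $\partial X_0$, the naive limit point sits in $\partial X_0$, where weak persistence says nothing, so no contradiction can be read off directly. The resolution is a Butler--McGehee-type (weak-repeller) argument: one shows, using the continuity of $T$ and $\rho$ and the invariance of $\{\phi_n\}$, that a full neighborhood of the boundary invariant set cannot trap orbits of $X_0$, because any orbit entering it would have to leave and re-enter while keeping $\limsup_n\rho<\epsilon$ along the associated limiting orbit in $X_0$ --- which is precisely forbidden by weak $\rho$-uniform persistence with constant $\epsilon$. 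This contradiction closes the argument and yields the claimed uniform persistence.

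Finally, I would note explicitly that every appeal to \cite{magal2005global,Zhaobook} above uses only the continuity of $T$ on $X_0$ together with the existence of the compact attracting set $M$; completeness of $X_0$ is never required, and the relative closedness of $\partial X_0$ and relative openness of $X_0$ (checked in the proof of Theorem \ref{th-a4}) are what make the limiting and invariance arguments go through. Thus the modifications needed relative to the cited reference are purely cosmetic, which is why the statement transfers verbatim to the present non-complete setting.
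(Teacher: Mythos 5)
Your overall strategy (argue by contradiction, use the oscillation between $\{\rho<1/m\}$ and $\{\rho\ge\epsilon\}$ forced by weak persistence, and use the compact attracting set $M$ to pass to a limit along recentered orbit segments) is the right skeleton, but the argument breaks at exactly the point you flag as the crux. You recenter each orbit at a time where $\rho$ is arbitrarily small, so the limit point lies in $\partial X_0$; since $T$ is not even defined there and weak persistence says nothing about $\partial X_0$, you then reach for a Butler--McGehee / weak-repeller argument. That appeal is not available here: Butler--McGehee-type conclusions require additional structure (an isolated, acyclic covering of the boundary dynamics, each part a weak repeller), none of which is hypothesized in this proposition, and the whole point of the statement is that weak persistence plus a compact point-attractor suffice \emph{without} such hypotheses. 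As written, your final step never actually exhibits a point of $X_0$ whose forward orbit violates weak $\rho$-uniform persistence, so no contradiction is obtained. (The side appeal to ``parabolic compactness already invoked for the model'' is also out of place in this abstract setting; the only compactness available, and the only one needed, is that of $M$.)

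The missing idea --- and the actual content of \cite[Proposition 3.2]{magal2005global}, whose proof the paper invokes verbatim --- is to recenter at the \emph{last high time before the dip} rather than at the dip itself. Concretely: for large $k$ choose $n_k$ with $\rho(T^{n_k}x_k)<1/k$ occurring after a time where $\rho\ge\epsilon/2$, and let $m_k<n_k$ be the largest index with $\rho(T^{m_k}x_k)\ge\epsilon/2$, arranged so that $m_k\to\infty$ and $d(T^{m_k}x_k,M)\to0$. Setting $y_k=T^{m_k}x_k$, compactness of $M$ gives (along a subsequence) $y_k\to y$ with $\rho(y)\ge\epsilon/2>0$, i.e.\ $y\in X_0$, where $T$ and all its iterates are defined and continuous. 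One checks that $n_k-m_k\to\infty$ (otherwise $\rho(T^{j}y)=\lim_k\rho(T^{n_k}x_k)=0$ for some fixed $j$, contradicting $T^jy\in X_0$), and since $\rho(T^{n}y_k)<\epsilon/2$ for $1\le n\le n_k-m_k$, continuity yields $\rho(T^ny)\le\epsilon/2$ for every $n\ge1$. Hence $\limsup_{n\to\infty}\rho(T^ny)\le\epsilon/2<\epsilon$, contradicting weak $\rho$-uniform persistence at $y$. This recentering is precisely what keeps the entire argument inside $X_0$, so the fact that $T$ is defined only on the incomplete set $X_0$ causes no difficulty --- which is why the paper can say the proof is ``exactly the same'' as in the complete-space setting.
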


A bounded subset $B\subset X_0$ is \emph{$\rho$-strongly bounded} if there exists $\epsilon>0$ such that $\inf_{x\in B} \rho(x)>0$.  The following result is borrowed from \cite[Theorem 3.8(a)]{magal2005global} with slight modification.

\begin{Proposition}\label{thm_fix}
Let $X$ and $X_0$ be defined as above. Suppose in addition that $X$ is a closed subset of some Banach space with the metric $d$ induced by the norm and $X_0$ is convex.  Let $T: X_0\rightarrow X_0$ be a continuous map. Suppose  that $T$ is compact, point dissipative, $\rho$-uniformly  persistent, and  it maps $\rho$-strongly bounded subsets of $X_0$ to $\rho$-strongly bounded subsets in $X_0$. Then $T$ has a fixed point in $X_0$.

\end{Proposition}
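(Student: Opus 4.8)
The plan is to derive the existence of a fixed point from the existence of a compact global attractor for $T$ lying strictly inside $X_0$, followed by an asymptotic (Horn-type) fixed-point theorem. This is the scheme of \cite[Theorem 3.8(a)]{magal2005global}; the only genuinely new point is that $T$ is defined and continuous merely on the relatively open, \emph{incomplete} set $X_0$, so every compact convex set to which a fixed-point theorem is applied must be kept strictly inside $X_0$, bounded away from $\partial X_0$. This is exactly what the two standing hypotheses, $\rho$-uniform persistence and the preservation of $\rho$-strong boundedness, are designed to supply in place of the completeness the classical statement could take for granted.

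First I would produce the attractor. Since $T$ is compact it is asymptotically smooth, and being point dissipative, Proposition \ref{thm_comp} yields a compact set attracting each point of $X_0$. Because $T$ is assumed $\rho$-uniformly persistent, the attractor theory for persistent systems as in \cite{magal2005global} upgrades this to a nonempty, compact, invariant global attractor $A_0\subset X_0$ with $T(A_0)=A_0$ that attracts every compact subset of $X_0$. The crucial extra feature is that $A_0$ is $\rho$-strongly bounded: since $\liminf_{n\to\infty}\rho(T^n x)$ is uniformly bounded below by a positive constant and $A_0$ consists of limits of full orbits, one obtains $\inf_{A_0}\rho=:2\delta_0>0$, so $A_0$ is separated from $\partial X_0$.

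Next I would run the fixed-point argument. By Mazur's theorem the closed convex hull $C_0:=\overline{\mathrm{co}}(A_0)$ is compact, and since $X_0$ is convex and $A_0\subset X_0$ we have $\mathrm{co}(A_0)\subset X_0$; the $\rho$-strong-boundedness hypothesis together with the continuity of $\rho$ is used to check that the closure $C_0$ still lies in $X_0$ and remains $\rho$-strongly bounded, so $T$ is defined and continuous on $C_0$. Because $T$ is nonlinear one cannot expect $T(C_0)\subset C_0$; instead I would invoke Horn's asymptotic fixed-point theorem with a nested triple $C_0\subset C_1\subset C_2$ of compact convex subsets of $X_0$, taking $C_2=\overline{\mathrm{co}}\big(\bigcup_{j=0}^{m}T^j(C_0)\big)$, which absorbs the first $m$ iterates (each $T^j(C_0)$ is precompact and $\rho$-strongly bounded by the hypotheses, so $C_2$ is compact convex and inside $X_0$), and $C_1$ a relatively open convex set with $C_0\subset C_1\subset C_2$. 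Since $A_0$ attracts the compact set $C_2$ and $A_0\subset C_0$, all sufficiently high iterates $T^n(C_1)$ are driven into $C_0$, while the intermediate iterates stay in $C_2$; Horn's theorem then delivers a fixed point, which lies in $X_0$ because every set involved is $\rho$-strongly bounded.

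The main obstacle is precisely this non-invariance forced by the nonlinearity of $T$ together with the incompleteness of $X_0$: there is no a priori compact convex set mapped into itself, and the naive candidates (sublevel or superlevel sets of $\rho$, or $C_0$ itself) may drift toward $\partial X_0$ under iteration. Overcoming it requires packaging the attraction property of $A_0$ into the three-set hypothesis of Horn's theorem while simultaneously certifying that the auxiliary convex hulls never reach $\partial X_0$, and it is here that $\rho$-uniform persistence and the preservation of $\rho$-strong boundedness do the essential work. In the concrete application in the proof of Theorem \ref{th-a4} this step is easier, since there $\rho((u,v))=\min\{\min_{\bar\Omega}u,\min_{\bar\Omega}v\}$ is concave, so its superlevel sets are already convex and one may construct an honestly positively invariant $\rho$-strongly bounded convex set directly.
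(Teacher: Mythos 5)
Your overall strategy (interior global attractor plus a Horn-type asymptotic fixed-point argument, with $\rho$-uniform persistence and preservation of $\rho$-strong boundedness keeping everything away from $\partial X_0$) is in the right spirit, and it is genuinely different from the paper's route: the paper does not touch convex hulls or Horn's theorem directly, but instead introduces the metric $d_0(x,y)=\bigl|\frac{1}{\rho(x)}-\frac{1}{\rho(y)}\bigr|+d(x,y)$, under which $(X_0,d_0)$ is complete, checks that your two ``extra'' hypotheses translate exactly into point dissipativity and compactness of $T$ on $(X_0,d_0)$, and then invokes the global-attractor and fixed-point theorems of \cite{Zhaobook} as black boxes. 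That metric change is precisely the device that avoids having to certify by hand that auxiliary convex sets stay inside $X_0$.

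As written, your construction has two concrete gaps. First, the absorption step of Horn's theorem fails with your choice $C_0=\overline{\mathrm{co}}(A_0)$: attraction of the compact set $C_2$ by $A_0$ only gives $\delta(T^n(C_1),A_0)\to 0$, i.e.\ the high iterates come within $\epsilon$ of $A_0$; it does not put them \emph{inside} $\overline{\mathrm{co}}(A_0)$, which in general contains no neighborhood of $A_0$ (think of $A_0$ a period-two orbit $\{a,b\}$, so that $\overline{\mathrm{co}}(A_0)$ is the segment $[a,b]$, while nearby points are mapped off the segment). The standard repair, as in the Hale--Lopes proof underlying \cite[Theorem 1.3.8]{Zhaobook}, is to take $C_0$ to be the closed convex hull of a compact set that attracts a neighborhood of itself. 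Second, and this is where the incompleteness of $X_0$ actually bites, you assert that $\overline{\mathrm{co}}(A_0)$ (and likewise $C_2$) lies in $X_0$ and is $\rho$-strongly bounded, ``by the $\rho$-strong-boundedness hypothesis and continuity of $\rho$.'' That hypothesis controls images under $T$, not convex hulls. Convexity of $X_0$ gives $\mathrm{co}(A_0)\subset X_0$, but in infinite dimensions the \emph{closure} of the convex hull of a compact set is strictly larger than the hull, and since $\rho$ is merely continuous (not concave) and $X_0$ is only relatively open in $X$, nothing in the stated hypotheses prevents limit points of convex combinations from landing on $\partial X_0$, where $T$ is undefined; the line-segment principle you would need can fail for relatively open convex subsets of a closed convex set. (In finite dimensions $\mathrm{co}$ of a compact set is already compact, and in the application to Theorem \ref{th-a4} the map $\rho$ is concave, so the issue disappears in both special cases --- but the Proposition assumes neither.) Repairing the first gap enlarges $C_0$ and makes the second gap worse. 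Either supply an argument that closed convex hulls of $\rho$-strongly bounded compacta remain in $X_0$, or switch to the paper's $d_0$-metric argument, where Proposition \ref{thm_comp} and the cited results of \cite{magal2005global, Zhaobook} do this work for you.
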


\begin{proof} Let $d_0$ be the metric on $M_0$ as introduced in \cite{magal2005global}:
$$
d_0(x, y)=\left|\frac{1}{\rho(x)}-\frac{1}{\rho(y)}\right|+d(x, y),  \ \ x, y\in M_0.
$$
Then $(M_0, d_0)$ is a complete metric space \cite[Lemma 3.5]{magal2005global}. Thus, all the terminology for $T: (M_0, d_0)\rightarrow (M_0, d_0)$ (eg. global attractor, dissipativity, asymptotical smoothness) can be adopted in the usual sense. Moreover, for any subset $B\subset X_0$, $B$ is bounded in $(X_0, d_0)$ if and only if  it is $\rho$-strongly bounded in $(X_0, d)$; if $B$ is closed (compact) in $(X_0, d)$, then it is closed (compact) in $(X_0, d_0)$; if $B$ is closed (compact) and bounded in $(X_0, d_0)$, then it is closed (compact) in $(X_0, d)$.

Since $T: (X_0, d)\rightarrow (X_0, d)$ is $\rho$-uniformly persistent and point dissipative,   $T: (X_0, d_0)\rightarrow (X_0, d_0)$ is point dissipative. Since $T: (X_0, d)\rightarrow (X_0, d)$ is compact and $T$ maps $\rho$-strongly bounded subsets of $X_0$ to $\rho$-strongly bounded subsets of $X_0$, then $T: (X_0, d_0)\rightarrow (X_0, d_0)$ is compact. To see this, let $B\subset X_0$ be  bounded in $(X_0, d_0)$, then it is $\rho$-strongly bounded in $(X_0, d)$.  Therefore, $T(B)$  is $\rho$-strongly bounded in $(X_0, d)$ and $\overline{T(B)}^d$ is compact in $(X, d)$. Since $T(B)$ is $\rho$-strongly bounded, we have $\overline{T(B)}^{d_0}=\overline{T(B)}^d\subset X_0$, which is compact in $(X_0, d_0)$. Since $T: (X_0, d_0)\rightarrow (X_0, d_0)$ is compact and point dissipative, it has a global attract $A_0\subset X_0$ (\cite[Theorem 1.1.3]{Zhaobook}) and a fixed point $x_0\in A_0$ (\cite[Theorem 1.3.8]{Zhaobook}).
\end{proof}

\vskip20pt

\end{document}